\newcommand{\KK}{\mathbb{K}}
\newcommand{\NN}{\normalfont\mathbb{N}}
\newcommand{\ZZ}{{\normalfont\mathbb{Z}}}
\newcommand{\pp}{{\normalfont\mathfrak{p}}}
\newcommand{\aaa}{\normalfont\mathfrak{a}}
\newcommand{\bbb}{\normalfont\mathfrak{b}}
\newcommand{\qqq}{\mathfrak{q}}
\newcommand{\Ker}{\normalfont\text{Ker}}
\newcommand{\Quot}{\normalfont\text{Quot}}
\newcommand{\Ass}{\normalfont\text{Ass}}
\newcommand{\Hom}{\normalfont\text{Hom}}
\newcommand{\leng}{{\normalfont\text{length}}}
\newcommand{\BBB}{\mathfrak{B}}
\newcommand{\LL}{\mathbb{L}}
\newcommand{\FF}{\mathbb{F}}
\newcommand{\HH}{\normalfont\text{H}}
\newcommand{\Sol}{\normalfont\text{Sol}}
\newcommand{\AAA}{\mathfrak{A}}
\newcommand{\Spec}{{\normalfont\text{Spec}}}
\newcommand{\Diff}{{\normalfont\text{Diff}}}
\newcommand{\MM}{\mathcal{M}}
\newcommand{\amult}{{\normalfont \text{amult}}}
\newcommand{\DiffR}{\Diff_{R/\KK}}
\newcommand{\Diag}{\Delta_{R/\KK}}
\newtheorem{theorem}{Theorem}[section]
\newaliascnt{headcor}{headthm}
\newaliascnt{headconj}{headthm}
\newaliascnt{corollary}{theorem}
\newtheorem{corollary}[corollary]{Corollary}
\newaliascnt{lemma}{theorem}
\newtheorem{lemma}[lemma]{Lemma}
\newaliascnt{conjecture}{theorem}
\newaliascnt{proposition}{theorem}
\newtheorem{proposition}[proposition]{Proposition}
\theoremstyle{definition}
\newaliascnt{definition}{theorem}
\newtheorem{definition}[definition]{Definition}
\newaliascnt{notation}{theorem}
\newtheorem{notation}[notation]{Notation}
\newaliascnt{example}{theorem}
\newtheorem{example}[example]{Example}
\newaliascnt{examples}{theorem}
\newaliascnt{remark}{theorem}
\newtheorem{remark}[remark]{Remark}
\newaliascnt{problem}{theorem}
\newaliascnt{construction}{theorem}
\newaliascnt{setup}{theorem}
\newtheorem{setup}[setup]{Setup}
\newaliascnt{algorithm}{theorem}
\newtheorem{algorithm}[algorithm]{Algorithm}
\newaliascnt{observation}{theorem}
\newaliascnt{defprop}{theorem}
\def\equationautorefname~#1\null{(#1)\null}
\def\sectionautorefname~#1\null{Section #1\null}
\def\subsectionautorefname~#1\null{\S #1\null}
\begin{document}

\title{Primary Decomposition with  Differential Operators}
\author{Yairon Cid-Ruiz}
\address[Cid-Ruiz]{Department of Mathematics: Algebra and Geometry, Ghent University, Belgium}
\email{Yairon.CidRuiz@UGent.be}
\author{Bernd Sturmfels}
\address[Sturmfels]{MPI-MiS Leipzig and UC Berkeley}
\email{bernd@mis.mpg.de}

\begin{abstract}
	We introduce differential primary decompositions for ideals in a commutative ring.
	Ideal membership is characterized by differential conditions. The minimal number
	of conditions needed is  the arithmetic multiplicity.
	Minimal differential primary decompositions are unique up to change of bases.
		Our results generalize the construction of Noetherian operators
	for primary ideals in the analytic theory of Ehrenpreis-Palamodov,
	and they offer a concise method for representing affine schemes.
	The case of modules is also addressed.
		We implemented an algorithm in {\tt Macaulay2} that computes the 
	minimal decomposition for an ideal in a polynomial~ring.
\end{abstract}

\maketitle

\section{Introduction}

Macaulay's theory of inverse systems \cite{Groebner} employs differential operators
to characterize membership in an ideal that is primary to the maximal ideal in a power series ring or to the maximal irrelevant ideal in a polynomial ring.
The number of operators needed is the multiplicity of the ideal. 
Building on \cite{BRUMFIEL_DIFF_PRIM},
this description was extended
to primary ideals in our previous papers \cite{NOETH_OPS, PRIM_IDEALS_DIFF_EQS}. 
The present article develops a minimal such representation
for arbitrary ideals in a commutative $\KK$-algebra that is essentially of finite type over a perfect field $\KK$. 
We introduce \emph{differential primary decompositions}.
These are  differential conditions that  characterize ideal membership.

\begin{example} \label{ex:triplepalamodov} We describe an ideal $I$ in $R = \mathbb{Q}[x,y,z]$.
	A polynomial $f$ lies in $I$ if  and only~if
	\begin{itemize}
		\item[(a)] both $f$ and $x \frac{\partial f }{\partial y} + \frac{\partial f}{\partial z}$ vanish on the $x$-axis, \smallskip
		\item[(b)] both $f$ and $y \frac{\partial f }{\partial z} + \frac{\partial f}{\partial x}$ vanish on the $y$-axis, \smallskip
		\item[(c)] both $f$ and $z \frac{\partial f }{\partial x} + \frac{\partial f}{\partial y}$ vanish on the $z$-axis, and \smallskip 
		\item[(d)] $ \frac{\partial^3 f}{\partial x \partial y \partial z} + 
		\frac{\partial^3 f}{\partial x^2 \partial y} +
		\frac{\partial^3 f}{\partial y^2 \partial z} +
		\frac{\partial^3 f}{\partial z^2 \partial x} $
		vanishes at the origin $(0,0,0)$. \smallskip
	\end{itemize}
	More familiar formats would be a list of ideal generators or a minimal primary decomposition:
	\begin{equation*}
		\label{eq:Igens}
		\begin{matrix} \quad
			I &  = &  \langle \,x y z^2, \,xy^2z, \,x^2 y z,\,
			y^2z^2, \,2 x yz -xz^2+ yz^3, 2 x y z  -x^2 y+ x^3 z, 2xyz-y^2z + xy^3 \rangle \smallskip \\
& = &		\langle\, y^2,\,z^2,\,y-xz \,\rangle \,\cap\,
			\langle \,x^2, \,y^2 z , \,z- xy\, \rangle \,\cap \,
			\langle \,x^2, \,y^2, \,x - yz \,\rangle \\ & &  \, \, \qquad \cap \,\,
			\langle \,x^3,\,y^3,\,z^3,\,xy^2,\,yz^2,\,zx^2,\,2xyz-x^2y,\, 2xyz-y^2z, \,2xyz-z^2x \,\rangle. 
		\end{matrix}
	\end{equation*}
We notice  four associated primes:
	$\pp_1 = \langle y,z \rangle$,
	$\pp_2 = \langle x,z \rangle$,
	$\pp_3 = \langle x,y \rangle$ and $\pp_4 = \langle x,y,z \rangle$.
In (a)-(d) we characterized membership in $I$ by a set 
	of linear differential operators for each prime:
		$$ \AAA_1 = \{ 1,\, x \partial_y {+} \partial_z \},\,
	 \AAA_2  = \{ 1,\, y \partial_z {+} \partial_x \,\},\,
	 \AAA_3 = \{  1,\, z \partial_x {+} \partial_y \}, 
	 \AAA_4 = \{\partial_x \partial_y \partial_z {+} 
			\partial_x^2 \partial_y {+} \partial_y^2 \partial_z {+}\partial_z^2 \partial_x \}. $$
The primary ideal $\langle\, y^2,\,z^2,\,y-xz \,\rangle$ is a famous example due to
Palamodov \cite{PALAMODOV}. He showed that
membership in this ideal cannot be described by differential operators 
	with constant coefficients.  
\end{example}

We now discuss the issues that are addressed in this paper. 
 Consider an ideal $I $ in an essentially of finite type $\KK$-algebra $ R$.
 Suppose that its set of associated primes is
$\Ass(R/I) = \{\pp_1,\ldots,\pp_k\} \subset \Spec(R)$.                                      
We wish to characterize ideal membership in $I$ by means of
differential operators. 
The natural place to look for these  is the
ring ${\rm Diff}_{R/\KK}(R,R)$ of  $\KK$-linear differential operators on $R$.

As in  \autoref{ex:triplepalamodov}, we hope to find
finite subsets $\AAA_1,\ldots,\AAA_k \subset {\rm Diff}_{R/\KK}(R,R)$ such that
\begin{equation}
\label{eq:diffprimdec}
I \,\,=\,\, \big\lbrace f \in R \mid \delta(f) \in \pp_i \text{ for  all }  \delta \in \AAA_i
\text{ and } i=1,2,\ldots,k \big\rbrace .
\end{equation}
Such differential primary decompositions exist for ideals $I$
in a polynomial ring $R = \KK[x_1,\ldots,x_n]$.
An instance with $n=3$, $k=4$, $|\AAA_1| = |\AAA_2| = | \AAA_3| = 2$ and $| \AAA_4| = 1$
was shown in \autoref{ex:triplepalamodov}.
This existence result is the Fundamental Principle of Palamodov-Ehrenpreis,
which is important in analysis \cite{Bjoerk, Ehrenpreis, Hoermander, PALAMODOV}.
The elements $\delta \in \AAA_i$ are known as Noetherian operators. They are the
key to solving linear partial differential equations with constant coefficients.
The computation~of Noetherian operators was addressed in
\cite{BRUMFIEL_DIFF_PRIM,  CCHKL, CHKL,
NOETH_OPS, PRIM_IDEALS_DIFF_EQS, OBERST_NOETH_OPS} and
continues to be of interest for both PDE and computer algebra.
For recent work along these lines see \cite{aitelmanssour_harkonen_sturmfels_2021, HHS}.
If each $\pp_i$ is a rational maximal ideal then we are dealing with inverse systems \cite{Groebner} 
and identifying the $\AAA_i$ is standard textbook material \cite[Theorem 3.27]{Mateusz}.
An algorithm for the case when
$R$ is a polynomial ring and $I$ is primary was given recently in \cite{PRIM_IDEALS_DIFF_EQS},
where  ${\rm Diff}_{R/\KK}(R,R)$ is the Weyl algebra $\KK\langle x_1,\ldots,x_n,\partial_{x_1},\ldots,\partial_{x_n} \rangle$,
and punctual Hilbert schemes play a major role.

\smallskip 

In this article, we present answers to the  following two questions:
\begin{itemize}
\item[(a)] \label{problemA} What is the analog of the representation \autoref{eq:diffprimdec} in $\KK$-algebras other than polynomial rings?
\item[(b)] \label{problemB} If a representation \autoref{eq:diffprimdec} exists for an ideal $I \subset R$, what is the minimal size of the sets $\AAA_i$?
\end{itemize}

To motivate \hyperref[problemA]{Problem~(a)}, we note that \autoref{eq:diffprimdec} fails for $\KK$-algebras $R$
that are not regular. 
The noncommutative ring ${\rm Diff}_{R/\KK}(R,R)$ can be very complicated.
It is usually not Noetherian.  A classical example is the cubic cone that was studied by
Bern\v{s}te\u{\i}n, Gel{\cprime}fand and Gel{\cprime}fand in
\cite{BGG_NON_NOETHERIAN_DIFF}:
\begin{equation}
\label{eq:Rnonsmooth} R\,=\, \mathbb{C}[x,y,z]/\langle x^3 + y^3 + z^3 \rangle.
\end{equation}
If $I$ is a power of the maximal ideal $\langle x,y,z \rangle$ in $R$ then
a representation \autoref{eq:diffprimdec} does not exist. This follows from
\cite[Example 5.2]{NOETH_OPS}. The issue is that there are too few differential operators on $R$.

\hyperref[problemB]{Problem~(b)} is motivated by a foundational question in computational algebraic geometry:
how to measure the complexity of a subscheme in affine space or projective space?
Our answer is drawn from \cite{STV_DEGREE}. For any $\pp_i \in {\rm Ass}(R/I)$ let
 ${\rm mult}_I(\pp_i)$ denote the length of the 
largest ideal of finite length in $R_{\pp_i}/I R_{\pp_i}$.
We define the {\em arithmetic multiplicity} of the ideal $I$ to be the sum 
\begin{equation}
\label{eq:arithmeticmultiplicity}
{\rm amult}(I) \,\,\, = \,\,\,   {\rm mult}_I(\pp_1) + {\rm mult}_I(\pp_2) +\,\cdots \, + {\rm mult}_I ( \pp_k ).
\end{equation}
If $I$ is a monomial ideal in $R = \KK[x_1,\ldots,x_n]$ then
${\rm amult}(I)$ is the number of {\em standard pairs}~$(\delta,\pp_i)$, by
\cite[Lemma 3.3]{STV_DEGREE}. In such a pair, $\pp_i$ is  a monomial prime and we can
identify $\delta$  with a differential operator $ \prod_{x_j \in \pp_i} \partial_{x_j}^{u_j}$.
The set $\AAA_i$ of such operators describes the contribution of the coordinate
subspace $V(\pp_i)$ to the scheme $V(I)$, and this yields the minimal representation \autoref{eq:diffprimdec}.

\begin{example} \label{ex:fromVogel}
Let $R = \mathbb{Q}[x,y,z]$ and $I = \langle x^2 y, x^2 z, xy^2, xyz^2 \rangle $
as in \cite[eqn (1.5)]{STV_DEGREE}. 
This ideal has  $k=4$ associated primes, namely
$\pp_1 = \langle x \rangle$, 
$\pp_2 = \langle y,z \rangle$,
$\pp_3 = \langle x,y \rangle$ and
$\pp_4 = \langle x,y,z \rangle$.
We see in \cite[eqn (3.3)]{STV_DEGREE} that membership in $I$ 
is characterized by  $\,{\rm amult}(I) = 5\,$ Noetherian operators:
$$ 
\AAA_1 =  \{ 1 \},\,
\AAA_2 =  \{ 1 \},\,
\AAA_3 =  \{ \partial_x \}\,\, {\rm and} \,\,\,
\AAA_4 =  \{  \partial_x \partial_y,\,\partial_x \partial_y \partial_z \}.
$$
\end{example}

Our contribution is a general theory that resolves both problems \hyperref[problemA]{(a)} and \hyperref[problemA]{(b)}. 
We propose a variant of \autoref{eq:diffprimdec}  where 
$\AAA_i$ consists of operators in ${\rm Diff}_{R / \KK}(R,R/\pp_i)$.
Such differential primary decompositions always exist, they satisfy $|\AAA_i| \geq {\rm mult}_I(\pp_i)$,
and equality is attained for all~$i$.

The presentation is organized as follows.
In \autoref{section_recap} we fix the set-up
and we review basics on differential operators
in commutative algebra. 
In \autoref{sect_diff_prim_dec_ideals}
we introduce differential primary decompositions. Our
main result on their existence and minimality appears in
 \autoref{thm:main}.
\autoref{sec:modules} generalizes
this result from ideals to modules.
We then specialize to formally smooth $\KK$-algebras $R$,
where differential operators in ${\rm Diff}_{R / \KK}(R,R)$ suffice and
\autoref{eq:diffprimdec} is valid as stated.

In  \autoref{sec:polynomials}
we turn to the case of polynomial rings, which is most relevant for applications.
\autoref{thm_poly_case} extends the results on primary ideals in \cite{PRIM_IDEALS_DIFF_EQS}.
We show how to compute  the pairs $(\pp_i,\AAA_i)$ for any ideal $I$ in
$R = \KK[x_1,\ldots,x_n]$ with $\text{char}(\KK)=0$. We discuss our
{\tt Macaulay2} implementation, we present non-trivial examples, and we
reflect on applications to  linear PDE as in \cite{Bjoerk, Ehrenpreis, Hoermander, PALAMODOV}.
Further practical  tools for
solving  such linear PDE can be found in \cite{aitelmanssour_harkonen_sturmfels_2021, HHS}.

\section{Differential Operators in Commutative Algebra}
\label{section_recap}

Throughout this paper we assume the setup below.
In this section, we fix notation and we recall some foundational results to be used
(for further details, the reader is referred to \cite[\S 16]{EGAIV_IV}).

\begin{setup}
	\label{setup_1}
	Let $\KK$ be a field and let $R$ be a $\KK$-algebra essentially of finite type over $\KK$.
	Let $A $ be a  $\KK$-subalgebra of $R$ such that $R$ is essentially of finite type over $A$.
	This means that $R$ is the localization of a finitely generated $A$-algebra.
	We are mostly interested in the case $A = \KK$.
\end{setup}

For any prime $\pp \in \Spec(R)$, we denote by $k(\pp)$ the residue field 
$k(\pp):=R_\pp/\pp R_\pp = \Quot(R/\pp)$.
For two $R$-modules $M$ and $N$, we regard $\Hom_A(M, N)$ as an $(R\otimes_A R)$-module, by setting 
$$
\left((r \otimes_A s) \delta\right)(w) \,=\, r \delta(sw) \quad \text{ for all } \delta \in \Hom_A(M, N), \; w \in M,\; r,s \in R. 
$$ 
We use the bracket notation $[\delta,r](w) = \delta(rw)-r\delta(w)$ for $\delta \in \Hom_A(M, N)$, $r \in R$ and $w \in M$.

Unless specified otherwise, whenever we consider an $(R \otimes_A R)$-module as an $R$-module, we do 
so by letting $R$ act via the left factor of $R \otimes_A R$. We now introduce our most relevant module.

\begin{definition}
	\label{def_diff_ops}
	Let $M, N$ be $R$-modules.
	The \textit{$m$-th order $A$-linear differential operators}, denoted
	$\,
	\Diff_{R/A}^m(M, N) \subseteq \Hom_A(M, N)$,
	form an $(R\otimes_A R)$-module that is defined inductively~by
	\begin{enumerate}[\rm (i)]
		\item $\Diff_{R/A}^{0}(M,N) \,:=\, \Hom_R(M,N)$.
		\item $\Diff_{R/A}^{m}(M, N) \,:= \,
		\big\lbrace \delta \in \Hom_A(M,N) \,\mid\, [\delta, r] \in \Diff_{R/A}^{m-1}(M, N) 
		\,\text{ for all }\, r \in R \big\rbrace$.
	\end{enumerate}
	The set of all \textit{$A$-linear differential operators from $M$ to $N$} 
	is the $(R \otimes_A R)$-module
	$$
	\Diff_{R/A}(M, N) \,\,:=\,\, \bigcup_{m=0}^\infty \Diff_{R/A}^m(M,N).
	$$
	Subsets $\mathcal{E} \subseteq \Diff_{R/A}(M, N)$ are viewed
	as differential equations. Their solution~spaces are
	\begin{equation}
		\label{eq:solE}
		{\rm Sol}(\mathcal{E}) \,\,:= \,\,\big\lbrace w \in M \,\mid \, \delta(w) = 0 \text{ for all } \delta \in \mathcal{E} \big\rbrace 
		\,\,= \,\,\bigcap_{\delta \in \mathcal{E} } {\rm Ker}(\delta).
	\end{equation}
\end{definition}

\begin{example}
If $R=\KK[x_1,\ldots,x_n]$ is a polynomial ring over a field $\KK$ of characteristic zero, then  
$\Diff_{R/\KK}(R,R)$ is the Weyl algebra $D_n = R\langle \partial_{x_1},\ldots, \partial_{x_n} \rangle$.
For a derivation see e.g.~\cite[Lemma 1]{PRIM_IDEALS_DIFF_EQS}.
\end{example}

To describe differential operators, one uses the module of principal parts.
Consider the multiplication map 
$
	\mu : R \otimes_A R \rightarrow R, \; 	r \otimes_A s \mapsto rs.
$
The kernel of this map is the ideal $\Delta_{R/A} \subset R \otimes_A R$. 
\begin{definition}
	Let $M$ be an $R$-module.
	The {\em module of $m$-th principal parts} is defined as	
	$$
	P_{R/A}^m(M) \,:=\, \frac{R \otimes_A M}{	\Delta_{R/A}^{m+1}  \left(R \otimes_A M\right)}.
	$$
This is a module over $R \otimes_A R$ and thus over $R$.
	For simplicity of notation, set $ P_{R/A}^m := P_{R/A}^m(R)$.
\end{definition}
For any $R$-module $M$, we consider the universal map 
$\, d^m : M \rightarrow P_{R/A}^m(M), \,\, w  \mapsto \overline{1 \otimes_A w} $.
The following result is a fundamental characterization of the modules of differential operators.

\begin{proposition}[{\cite[Proposition 16.8.4]{EGAIV_IV}, \cite[Theorem 2.2.6]{AFFINE_HOPF_I}}]
	\label{prop_represen_diff_opp}
	Let $M$ and $N$ be $R$-modules and let $m\ge 0$.
	Then, the following map is an isomorphism of $R$-modules:
	\begin{align*}
		{\left(d^m\right)}^*\, :\, \Hom_R\left(P_{R/A}^m(M), N\right) 
		&\,\,\xrightarrow{\cong} \,\,\Diff_{R/A}^m(M, N), \\
		\varphi  \quad &\,\,\mapsto \,\quad \varphi \circ d^m.
	\end{align*}
\end{proposition}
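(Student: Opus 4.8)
The plan is to establish the universal property underlying the module of principal parts: the pair $(P_{R/A}^m(M), d^m)$ is the universal one consisting of an $R$-module together with an order-$\le m$ differential operator out of $M$, and the displayed isomorphism is a reformulation of this. Throughout I keep the convention used in the excerpt, so $P_{R/A}^m(M) = (R\otimes_A M)/\Delta_{R/A}^{m+1}(R\otimes_A M)$, and both $\Diff_{R/A}^m(M,N)$ and $\Hom_R(P_{R/A}^m(M),N)$ carry the $R$-module structure coming from the left tensor factor, i.e.\ $r\cdot\overline{s\otimes_A w} = \overline{rs\otimes_A w}$. I would proceed in three steps.

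First I would check that $(d^m)^*$ is a well-defined $R$-linear map. The crucial point is that $d^m$ is itself a differential operator of order $\le m$: from $[d^m,r](w) = \overline{1\otimes_A rw} - \overline{r\otimes_A w} = -\,\overline{(r\otimes_A 1 - 1\otimes_A r)(1\otimes_A w)}$ and iteration, the $(m+1)$-fold bracket of $d^m$ lands in $\Delta_{R/A}^{m+1}(R\otimes_A M)$ and hence vanishes in $P_{R/A}^m(M)$; so $d^m\in\Diff_{R/A}^m(M,P_{R/A}^m(M))$. Since post-composing an order-$\le m$ operator $\delta$ with an $R$-linear map $\varphi$ again gives an order-$\le m$ operator — because $[\varphi\circ\delta,r] = \varphi\circ[\delta,r]$ for $R$-linear $\varphi$, so one inducts on $m$ — it follows that $\varphi\circ d^m\in\Diff_{R/A}^m(M,N)$ for every $\varphi$. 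Additivity and $R$-linearity of $\varphi\mapsto\varphi\circ d^m$ are immediate from the definitions.

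Second, injectivity. Since $s\otimes_A w = (s\otimes_A 1)(1\otimes_A w)$, the module $R\otimes_A M$ is generated, as an $R$-module via the left factor, by the elements $1\otimes_A w$; hence $P_{R/A}^m(M)$ is generated as a left $R$-module by the image of $d^m$. An $R$-linear $\varphi$ with $\varphi\circ d^m = 0$ therefore vanishes on a generating set and is zero.

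Third, surjectivity, which is the heart of the matter. Given $\delta\in\Diff_{R/A}^m(M,N)$, I would define $\widetilde\varphi_\delta\colon R\otimes_A M\to N$ by $s\otimes_A w\mapsto s\,\delta(w)$; this is well defined and $A$-bilinear because $\delta$ is $A$-linear and $R$ is commutative, and it is $R$-linear for the left factor. It remains to show that $\widetilde\varphi_\delta$ kills $\Delta_{R/A}^{m+1}(R\otimes_A M)$, so that it descends to the desired $\varphi\colon P_{R/A}^m(M)\to N$, which then satisfies $\varphi\circ d^m = \delta$. For this I would use two facts: (i) because $R\otimes_A R$ is commutative, $\Delta_{R/A}^{m+1}(R\otimes_A M)$ is spanned over $\ZZ$ by the elements $\prod_{i=1}^{m+1}(r_i\otimes_A 1 - 1\otimes_A r_i)\cdot\xi$ with $r_i\in R$ and $\xi\in R\otimes_A M$; and (ii) the identity $\widetilde\varphi_\delta\circ(r\otimes_A 1 - 1\otimes_A r) = -\,\widetilde\varphi_{[\delta,r]}$, checked on simple tensors, whose iteration gives $\widetilde\varphi_\delta\circ\prod_{i=1}^{m+1}(r_i\otimes_A 1 - 1\otimes_A r_i) = \pm\,\widetilde\varphi_{\delta'}$ with $\delta'$ an $(m+1)$-fold iterated bracket of $\delta$, hence $\delta' = 0$ since $\delta$ has order $\le m$ (after $m$ brackets one is in $\Diff_{R/A}^0 = \Hom_R$, and one more bracket of an $R$-linear map is zero). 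This matching of the $(m+1)$-fold commutator with the $(m+1)$-st power of $\Delta_{R/A}$ is the step I expect to be the main obstacle; the rest is formal. Combining the three steps yields the claimed isomorphism of $R$-modules.
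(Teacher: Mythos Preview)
The paper does not supply its own proof of this proposition; it is stated with references to \cite[Proposition 16.8.4]{EGAIV_IV} and \cite[Theorem 2.2.6]{AFFINE_HOPF_I} and then used as a black box. Your argument is correct and is in fact the classical one underlying those references: the three steps (well-definedness via $d^m\in\Diff_{R/A}^m$, injectivity from the fact that $d^m(M)$ generates $P_{R/A}^m(M)$ as a left $R$-module, and surjectivity via the identity $\widetilde\varphi_\delta\circ(r\otimes 1-1\otimes r)=-\widetilde\varphi_{[\delta,r]}$ iterated $m{+}1$ times) are exactly the standard verification of the universal property of principal parts, and the key computation matching the $(m{+}1)$-fold commutator with multiplication by $\Delta_{R/A}^{m+1}$ is carried out correctly.
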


The next lemma records basic facts about the localization of differential operators. 

\begin{lemma} \label{lem_localization} Let $M,N$ be $R$-modules with $M$ finitely generated.
	Given $W \subset R$ multiplicatively closed, we set $S= W^{-1}R$, $M'=W^{-1}M$ and $N'=W^{-1}N$.
	Then the following statements hold:
	\begin{enumerate}[\rm (i)]
		\item There is a  canonical localization map 
		$$
		\Diff_{R/A}(M,N) \;\rightarrow\; S \otimes_R \Diff_{R/A}(M,N) \;\cong\; \Diff_{S/{(W \cap A)}^{-1}A }(M',N').
		$$
		\item Assume that $N$ is $W$-torsion-free.
Consider a subset $\AAA \subset \Diff_{R/A}(M,N)$ and let $\AAA'$ be the image of $\AAA$ under the map in part (i).
		Then, we have the equality $\,\Sol(\AAA) = \Sol(\AAA') \cap M$.
	\end{enumerate}	
\end{lemma}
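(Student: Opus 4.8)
The plan is to prove the two parts in order, since part (ii) relies on part (i). For part (i), the construction of the localization map should go through the module of principal parts via \autoref{prop_represen_diff_opp}. First I would recall that for a finitely generated $R$-module $M$, the principal parts module $P_{R/A}^m(M)$ is also finitely generated over $R$ (being a quotient of $R\otimes_A M$, which is finitely generated since $R$ is essentially of finite type over $A$, hence over $\KK$). Then $S\otimes_R P_{R/A}^m(M)$ is naturally identified with $P_{S/(W\cap A)^{-1}A}^m(M')$: this is the standard base-change compatibility of principal parts under localization, which one checks by noting that $S\otimes_R(R\otimes_A M)=S\otimes_{(W\cap A)^{-1}A} M'$ and that localization commutes with taking the quotient by the power of the diagonal ideal $\Delta_{R/A}$, whose extension to $S\otimes_A S$ generates $\Delta_{S/(W\cap A)^{-1}A}$. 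Applying $S\otimes_R\Hom_R(-,N)$ and using that $\Hom$ commutes with localization when the source is finitely presented over $R$, I get
$$
S\otimes_R\Hom_R\!\big(P_{R/A}^m(M),N\big)\;\cong\;\Hom_S\!\big(P_{S/(W\cap A)^{-1}A}^m(M'),N'\big),
$$
and \autoref{prop_represen_diff_opp} on both sides turns this into the claimed isomorphism after taking the union over $m$; the canonical map is then $\delta\mapsto 1\otimes_R\delta$, which concretely sends a differential operator to its unique extension $\delta/1$ acting on fractions by $\delta(w/s)=\delta(w)/s$ (one verifies this extension is well-defined and still a differential operator of the same order by induction on $m$ using the bracket relation).

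For part (ii), the point is a diagram chase with kernels. Write $\delta'\in\AAA'$ for the image of $\delta\in\AAA$; by construction $\delta'$ restricts to $\delta$ on $M$ (under the canonical map $M\to M'$), i.e. the square with $\delta,\delta'$ and the localization maps $M\to M'$, $N\to N'$ commutes. Hence if $w\in M$ satisfies $\delta(w)=0$ for all $\delta\in\AAA$, then its image $w/1\in M'$ satisfies $\delta'(w/1)=0$ for all $\delta'\in\AAA'$, giving $\Sol(\AAA)\subseteq\Sol(\AAA')\cap M$ — here I am implicitly using $M\to M'$ to view $\Sol(\AAA')\cap M$ inside $M'$, but since no $W$-torsion hypothesis is placed on $M$ this inclusion direction needs no assumption. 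Conversely, take $w\in M$ with $w/1\in\Sol(\AAA')$. For each $\delta\in\AAA$, commutativity of the square gives that the image of $\delta(w)$ in $N'=W^{-1}N$ is $\delta'(w/1)=0$, so $\delta(w)$ is $W$-torsion in $N$; since $N$ is assumed $W$-torsion-free, $\delta(w)=0$. As this holds for every $\delta\in\AAA$, we get $w\in\Sol(\AAA)$, proving $\Sol(\AAA')\cap M\subseteq\Sol(\AAA)$.

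The main obstacle I anticipate is entirely in part (i): establishing the base-change isomorphism $S\otimes_R P_{R/A}^m(M)\cong P_{S/(W\cap A)^{-1}A}^m(M')$ cleanly, and making sure the identification of base rings $(W\cap A)^{-1}A$ is correct (one must check that inverting $W\cap A$ in $A$ and then localizing is compatible with the $A$-algebra structure on $S$, and that the diagonal ideal behaves well — essentially that $\Delta_{S/(W\cap A)^{-1}A}$ is generated by the image of $\Delta_{R/A}$). The finiteness hypothesis on $M$ is exactly what is needed to commute $\Hom_R(-,N)$ past localization, so I would be careful to invoke finite presentation of $P_{R/A}^m(M)$ (which follows since $R$ is Noetherian, being essentially of finite type over a field). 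Part (ii) is then a routine, if slightly fiddly, chase once the commuting square "$\delta$ localizes to $\delta'$" is in hand.
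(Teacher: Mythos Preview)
Your argument for part~(ii) is essentially identical to the paper's: the paper draws the commuting square with $\delta$ on top, $\delta'$ on the bottom, observes that the right vertical map $N\to N'$ is injective since $N$ is $W$-torsion-free, and concludes $\Ker(\delta)=\Ker(\delta')\cap M$. For part~(i) the paper simply cites \cite[Lemma~2.7]{NOETH_OPS}, whereas you supply a self-contained sketch via principal parts and \autoref{prop_represen_diff_opp}; this is the standard route and is correct in outline.

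One point to fix in your part~(i): the claim that $P_{R/A}^m(M)$ is finitely generated because it is a quotient of $R\otimes_A M$, ``which is finitely generated since $R$ is essentially of finite type over $A$'', is not right. As a left $R$-module, $R\otimes_A M$ is typically \emph{not} finitely generated (already $R\otimes_\KK R\cong\KK[x_1,\dots,x_n,y_1,\dots,y_n]$ fails). The correct argument is that $P_{R/A}^m$ carries the $\Delta$-adic filtration with successive quotients $\Delta^j/\Delta^{j+1}\cong\Sym_R^j(\Omega_{R/A})$, and $\Omega_{R/A}$ is finitely generated because $R$ is essentially of finite type over $A$; hence $P_{R/A}^m$ is a finitely generated $R$-module, and then $P_{R/A}^m(M)\cong P_{R/A}^m\otimes_R M$ is as well. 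With $R$ Noetherian this gives finite presentation, and your localization-of-$\Hom$ step goes through. Apart from this justification, the plan is sound.
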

\begin{proof}
	(i) See, e.g., \cite[Lemma 2.7]{NOETH_OPS}.
	
	(ii) The proof follows verbatim that of \cite[Proposition 3.6~(ii)]{NOETH_OPS}.
	Let $\delta \in \AAA$ and consider its extension $\delta' \in \AAA'$.
	We have the following commutative diagram  of $A$-linear maps:
	\begin{center}
		\begin{tikzpicture}[baseline=(current  bounding  box.center)]
\matrix (m) [matrix of math nodes,row sep=2em,column sep=6em,minimum width=2em, text height=1.3ex, text depth=0.25ex]
			{
				M & N \\
				M' & N' \\
			};
			\path[-stealth]
			(m-1-1) edge node [above] {$\delta$} (m-1-2)
			(m-2-1) edge node [above] {$\delta'$} (m-2-2)
			(m-1-1) edge (m-2-1)
			;		
			\draw[right hook->] (m-1-2)--(m-2-2);			
		\end{tikzpicture}	
	\end{center}
	The vertical map on the right is injective. 
	Hence $\Ker(\delta) = \Ker(\delta') \cap M$, and the claim follows.
\end{proof}

For the sake of completeness, we describe the localization of differential operators explicitly.

\begin{remark}
	\label{rem_explicit_localiz}
	Given $\delta \in \Diff_{R/A}^m(M, N)$, we extend it to an element $\delta' \in \Diff_{S/{(W \cap A)}^{-1}A}^m(M',N')$.
	We proceed by induction on $m$.
	If $m=0$, then $\delta \in \Hom_R(M,N)$ and $\delta'$ is defined by setting $\delta'(\frac{\beta}{w}) = \frac{\delta(\beta)}{w}$ for all $\beta \in M, w \in W$.	If $m > 0$, then we set
$$ \qquad \delta'\left(\frac{\beta}{w}\right) \,=\, \frac{\delta(\beta) - [\delta,w]'(\frac{\beta}{w})}{w}
\qquad \text{for all $\beta \in M$ and $w \in W$.}
$$
This is well-defined by the induction hypothesis and the fact that $[\delta,w] \in \Diff_{R/A}^{m-1}(M,N)$.
\end{remark}

Here is another basic remark regarding the solutions to differential operators of order $m$.

\begin{remark}
	\label{rem_incl_power_pp}
	Let $M$ be an $R$-module, $\pp \in \Spec(R)$ a prime ideal and $\delta \in \Diff_{R/A}^m(M, R/\pp)$.
	Then, by induction on $m$, it follows that $\Sol(\delta) \supseteq \pp^{m+1}M$.
	If $m=0$, then $\delta \in \Hom_R(M, R/\pp)$ and the result is clear. 
	If $m > 0$, for $r \in \pp$ and $\beta \in \pp^mM$, then we obtain the identity
	$$
	\delta(r\beta) \,=\, r\delta(\beta)  + [\delta,r](\beta)\, =\, 0.
	$$
	The latter equation follows from $[\delta,r] \in \Diff_{R/A}^{m-1}(M,R/\pp)$ and the induction hypothesis.
\end{remark}

The following lemma will be used for a process of lifting differential operators.

\begin{lemma}[{\cite[Proposition 3.12]{NOETH_OPS}}]
	\label{lem_lifting}
	Suppose that $R$ is formally smooth over $A$. 
	Let $\pp \in \Spec(R)$ be a prime ideal and $F$ a free $R$-module of finite rank.
	Then,  the canonical map 
	$\,	\Diff_{R/A}^m(F, R) \rightarrow \Diff_{R/A}^m(F,R/\pp)	\,$ 
	is surjective for all $m \ge 0$.
\end{lemma}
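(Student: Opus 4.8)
The plan is to reduce the surjectivity statement to the case of the module of principal parts via \autoref{prop_represen_diff_opp}, and then to exploit formal smoothness to split the relevant exact sequence. First I would dualize: by \autoref{prop_represen_diff_opp}, the canonical map $\Diff_{R/A}^m(F,R)\to\Diff_{R/A}^m(F,R/\pp)$ is identified with the map
\[
\Hom_R\!\big(P_{R/A}^m(F),R\big)\,\longrightarrow\,\Hom_R\!\big(P_{R/A}^m(F),R/\pp\big)
\]
induced by the quotient $R\twoheadrightarrow R/\pp$. So it suffices to show that this $\Hom$-functor applied to $P_{R/A}^m(F)$ sends the surjection $R\to R/\pp$ to a surjection. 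Since $F$ is free of finite rank, say $F\cong R^t$, we have $P_{R/A}^m(F)\cong \big(P_{R/A}^m\big)^{\oplus t}$, so we may take $t=1$ and work with $P_{R/A}^m=P_{R/A}^m(R)$.

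The key point is that formal smoothness of $R$ over $A$ forces $P_{R/A}^m$ to be a \emph{projective} (equivalently, locally free of finite rank) $R$-module; this is the standard characterization of formal smoothness in terms of principal parts (see \cite[\S 16.10]{EGAIV_IV}), and it is exactly the input that fails for singular $R$ such as the cubic cone in \autoref{eq:Rnonsmooth}. Granting this, the step is immediate: for a projective module $Q$ the functor $\Hom_R(Q,-)$ is exact, hence applying it to the surjection $R\twoheadrightarrow R/\pp$ yields the surjection $\Hom_R(Q,R)\twoheadrightarrow\Hom_R(Q,R/\pp)$. Transporting back through the isomorphisms of \autoref{prop_represen_diff_opp} gives the desired surjectivity of $\Diff_{R/A}^m(F,R)\to\Diff_{R/A}^m(F,R/\pp)$.

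Concretely, I would organize the proof as: (i) reduce to $F=R$ using additivity of $P_{R/A}^m(-)$ on finite direct sums and of $\Hom_R$ and $\Diff_{R/A}^m$; (ii) invoke \autoref{prop_represen_diff_opp} to replace the differential-operator map with $\Hom_R(P_{R/A}^m,R)\to\Hom_R(P_{R/A}^m,R/\pp)$; (iii) cite that $P_{R/A}^m$ is a finitely generated projective $R$-module because $R$ is formally smooth over $A$; (iv) conclude by exactness of $\Hom_R(P_{R/A}^m,-)$ on the short exact sequence $0\to\pp\to R\to R/\pp\to 0$. The main obstacle — really the only nontrivial ingredient — is step (iii): one must be careful about which version of formal smoothness is assumed (formally smooth plus essentially of finite type, as in \autoref{setup_1}, so that $\Omega_{R/A}^1$ and all $P_{R/A}^m$ are locally free of finite rank) and cite the correct statement from \cite{EGAIV_IV}. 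Everything else is a formal consequence of the representability of differential operators already established in \autoref{prop_represen_diff_opp}. Since the lemma is attributed to \cite[Proposition 3.12]{NOETH_OPS}, I would in practice simply refer to that proof, but the argument above is the self-contained version.
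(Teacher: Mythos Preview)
Your proposal is correct. The paper itself does not give a proof of this lemma; it simply cites \cite[Proposition~3.12]{NOETH_OPS}, so there is nothing in the present paper to compare against directly. Your argument---reduce to $F=R$, use \autoref{prop_represen_diff_opp} to rewrite the map as $\Hom_R(P_{R/A}^m,R)\to\Hom_R(P_{R/A}^m,R/\pp)$, and then invoke that $P_{R/A}^m$ is finitely generated projective because $R$ is formally smooth and essentially of finite type over $A$---is exactly the standard proof and is what one finds in the cited reference. Your caution about step~(iii) is well placed: the projectivity of $P_{R/A}^m$ uses both formal smoothness and the finiteness hypotheses of \autoref{setup_1}, and the relevant EGA reference is \cite[16.10.2]{EGAIV_IV}.
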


For ease of notation, we fix the following piece of data for the rest of this section. 

\begin{notation} Assume \autoref{setup_1} with $A = \KK$.
Let $\pp \in \Spec(R)$ with residue field $\FF = k(\pp)$.
Let $\MM$ be the kernel of the multiplication map $\FF \otimes_\KK R_\pp \rightarrow \FF$.
Then $\MM = \Diag \left(\FF \otimes_\KK R\right)$ is the extension of $\Diag \subset R \otimes_\KK R$ into  $\FF \otimes_\KK R$. The ideal $\MM$ is maximal with $\left(\FF \otimes_\KK R_\pp\right) / \MM \cong \FF$.
\end{notation}

The following lemma will be used in the proof of \autoref{thm:main}.

\begin{lemma}[{\cite[Lemma 3.8, Lemma 3.14]{NOETH_OPS}}]
	\label{lem_diff_ops_correspon_to_ideals}
	Let $M$ be a finitely generated $R$-module.
	\begin{enumerate}[\rm (i)]
		\item There is an isomorphism of $(R_\pp \otimes_\KK R_\pp)$-modules 
		$$
		\Diff_{R_\pp/\KK}^m(M_\pp,\FF) \, \cong \, \Hom_{\FF}\left(\frac{\FF \otimes_\KK M_\pp}{\MM^{m+1}\left(\FF \otimes_\KK M_\pp\right)}, \,\FF\right). $$
		\item The $R_\pp$-module $\Diff_{R_\pp/\KK}^m(M_\pp,\FF)$ is a finite-dimensional vector space over the field $\,\FF$.
		\item The isomorphism in (i) induces a bijection between $\MM$-primary submodules 
		$	\mathbb{V} \subset \FF \otimes_\KK M_\pp	$ 
		containing $\MM^{m+1}\!\left(\FF \otimes_\KK M_\pp\right)$ and  $(R_\pp \otimes_\KK R_\pp)$-submodules of 
		$
		\Diff_{R_\pp/\KK}^m(M_\pp,\FF)
		$. It is given by
		$$
		\MM^{m+1}\!\left(\FF \otimes_\KK M_\pp\right) \,\, \subset \,\,\,\mathbb{V}  \,
		\;\mapsto\; \Hom_{\FF}\left(\frac{\FF \otimes_\KK M_\pp}{\mathbb{V}}, \FF\right) \,\, =: \,\, \mathcal{E}.
		$$
		\item Under the bijection in (iii), viewing $\,\mathcal{E}$ as
		differential equations, the solution space in \autoref{eq:solE}~is
		$$ \Sol(\mathcal{E}) \,\,=\,\, \mathbb{V} \cap M_\pp, $$
		where $\mathbb{V} \cap M_\pp$ denotes the contraction of $\,\mathbb{V}$ under the canonical inclusion 
		$$ M_\pp \,\cong\, 1 \otimes_\KK M_\pp \,\hookrightarrow\, \FF \otimes_\KK M_\pp. $$
	\end{enumerate}
\end{lemma}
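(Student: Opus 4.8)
The plan is to derive all four parts from the representability of differential operators, \autoref{prop_represen_diff_opp}, combined with linear duality over a suitable Artinian local ring. For part~(i), first I would apply \autoref{prop_represen_diff_opp} with $R$ replaced by $R_\pp$, $A=\KK$, $M$ replaced by $M_\pp$, and $N=\FF$, obtaining $\Diff_{R_\pp/\KK}^m(M_\pp,\FF)\cong\Hom_{R_\pp}\!\big(P_{R_\pp/\KK}^m(M_\pp),\FF\big)$. Since $\FF=R_\pp/\pp R_\pp$, every $R_\pp$-linear map to $\FF$ factors through $-\otimes_{R_\pp}\FF$, so this equals $\Hom_\FF\!\big(P_{R_\pp/\KK}^m(M_\pp)\otimes_{R_\pp}\FF,\FF\big)$. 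Next I would identify $P_{R_\pp/\KK}^m(M_\pp)\otimes_{R_\pp}\FF$ with $(\FF\otimes_\KK M_\pp)/\MM^{m+1}(\FF\otimes_\KK M_\pp)$: base-changing the left tensor factor of $R_\pp\otimes_\KK M_\pp$ along $R_\pp\twoheadrightarrow\FF$ produces $\FF\otimes_\KK M_\pp$ and carries $\Delta_{R_\pp/\KK}^{m+1}(R_\pp\otimes_\KK M_\pp)$ onto $\MM^{m+1}(\FF\otimes_\KK M_\pp)$, because $\MM$ is by construction the extension of $\Delta_{R_\pp/\KK}$ along this surjection and extension of ideals commutes with powers and with multiplication into a module; one checks the maps respect both tensor factors of $R_\pp\otimes_\KK R_\pp$. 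For part~(ii): $R_\pp$ is essentially of finite type over $\KK$, hence $\FF\otimes_\KK R_\pp$ is essentially of finite type, in particular Noetherian, over $\FF$, and since $\MM$ is maximal the ring $T:=(\FF\otimes_\KK R_\pp)/\MM^{m+1}$ is Artinian local with residue field $\FF$ and nilpotent maximal ideal $\MM/\MM^{m+1}$. As $M_\pp$ is finitely generated over $R_\pp$, the module $\bar V:=(\FF\otimes_\KK M_\pp)/\MM^{m+1}(\FF\otimes_\KK M_\pp)$ is finitely generated over $T$, hence of finite length, hence finite-dimensional over $\FF$; by part~(i) so is $\Diff_{R_\pp/\KK}^m(M_\pp,\FF)$.

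For part~(iii), set $V:=\FF\otimes_\KK M_\pp$ and regard $V$ and $\bar V$ as $T$-modules. The functor $D:=\Hom_\FF(-,\FF)$ restricts to an exact contravariant self-equivalence on finite-length $T$-modules --- Matlis duality with coefficient field $\FF$ --- and therefore induces an inclusion-reversing bijection between $T$-submodules of $\bar V$ and $T$-submodules of $D(\bar V)\cong\Diff_{R_\pp/\KK}^m(M_\pp,\FF)$, sending $\bar{\mathbb{V}}\subseteq\bar V$ to $D(\bar V/\bar{\mathbb{V}})$, the functionals vanishing on $\bar{\mathbb{V}}$. Because $R_\pp\twoheadrightarrow\FF$ and $\FF\otimes_\KK R_\pp\twoheadrightarrow T$ are surjective, ``$T$-submodule'' coincides with ``$(R_\pp\otimes_\KK R_\pp)$-submodule'' both on $\bar V$ and on its dual; and by the correspondence theorem the $T$-submodules of $\bar V$ are exactly the submodules $\mathbb{V}\subseteq V$ with $\MM^{m+1}V\subseteq\mathbb{V}$. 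Each such $\mathbb{V}$ is $V$ itself or is $\MM$-primary in $V$, since $V/\mathbb{V}$ is a quotient of $\bar V$ and hence annihilated by $\MM^{m+1}$, forcing $\Ass(V/\mathbb{V})\subseteq\{\MM\}$. Under these identifications the bijection becomes $\mathbb{V}\mapsto D(V/\mathbb{V})=\Hom_\FF\!\big((\FF\otimes_\KK M_\pp)/\mathbb{V},\FF\big)=:\mathcal{E}$, which is the assertion.

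For part~(iv), unwinding \autoref{prop_represen_diff_opp} shows that the operator $\delta\in\Diff_{R_\pp/\KK}^m(M_\pp,\FF)$ corresponding to $\varphi\in D(\bar V)$ acts by $\delta(w)=\varphi(\overline{1\otimes w})$ for $w\in M_\pp$, i.e.\ by evaluating $\varphi$ on the image in $\bar V$ of $1\otimes w\in V$. Writing $\pi\colon V\to\bar V$ for the projection, $\mathcal{E}$ consists precisely of the $\varphi$ vanishing on $\pi(\mathbb{V})$, so $w\in\Sol(\mathcal{E})$ if and only if every $\FF$-functional on the finite-dimensional space $\bar V$ vanishing on $\pi(\mathbb{V})$ also vanishes at $\pi(1\otimes w)$; by duality of finite-dimensional vector spaces this holds exactly when $\pi(1\otimes w)\in\pi(\mathbb{V})$, i.e.\ when $1\otimes w\in\mathbb{V}+\MM^{m+1}V=\mathbb{V}$. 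Hence $\Sol(\mathcal{E})=\{\,w\in M_\pp:1\otimes w\in\mathbb{V}\,\}=\mathbb{V}\cap M_\pp$, with $M_\pp$ identified with its image under $w\mapsto 1\otimes w$. The only genuinely delicate point is part~(iii): one must track the two $R_\pp$-actions, equivalently the $(R_\pp\otimes_\KK R_\pp)$-module structures, through the chain of identifications in part~(i), verify that passing to $T$ and to the $\FF$-dual neither enlarges nor shrinks the relevant lattices of submodules, and accommodate the degenerate member $\mathbb{V}=V$ among the ``$\MM$-primary'' submodules; everything else is a direct application of \autoref{prop_represen_diff_opp} together with elementary linear algebra over $\FF$.
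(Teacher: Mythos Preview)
The paper does not prove this lemma; it is quoted from \cite[Lemma~3.8, Lemma~3.14]{NOETH_OPS} without argument. Your proof is correct and follows the natural route---representability via principal parts (\autoref{prop_represen_diff_opp}), base change along $R_\pp\twoheadrightarrow\FF$ to identify $P_{R_\pp/\KK}^m(M_\pp)\otimes_{R_\pp}\FF$ with $(\FF\otimes_\KK M_\pp)/\MM^{m+1}(\FF\otimes_\KK M_\pp)$, and then linear (Matlis) duality over the Artinian local ring $T=(\FF\otimes_\KK R_\pp)/\MM^{m+1}$---which is also the strategy in the cited source. Your handling of the degenerate case $\mathbb{V}=\FF\otimes_\KK M_\pp$ in part~(iii) and the unwinding of $\delta(w)=\varphi(\overline{1\otimes w})$ in part~(iv) are both accurate.
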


The lemma below will be used to reduce to a separable algebraic setting.

\begin{lemma}[{\cite[Proposition 3.6~(i)]{NOETH_OPS}}]
	\label{lem_extend_field}
	If $\KK$ is a perfect field, then there is an intermediate field $\KK \subset \mathbb{L} \subset R_\pp$ such that $\mathbb{L} \hookrightarrow \FF$ is a separable algebraic extension.
\end{lemma}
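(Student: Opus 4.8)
The plan is to build $\mathbb{L}$ as the subfield of $R_\pp$ generated over $\KK$ by a lift of a \emph{separating transcendence basis} of the residue field. Since $R$ is essentially of finite type over $\KK$, the domain $R/\pp$ is a localization of a finitely generated $\KK$-algebra, so $\FF = \Quot(R/\pp)$ is a finitely generated field extension of $\KK$. Here one invokes the classical fact that over a perfect field every finitely generated field extension is separably generated: there is a transcendence basis $t_1,\dots,t_d \in \FF$ of $\FF$ over $\KK$ such that $\FF$ is a finite separable extension of $\KK(t_1,\dots,t_d)$. This is the only external input; everything else is a routine contraction argument inside the local ring $R_\pp$.

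Next I would choose arbitrary preimages $\widetilde t_1,\dots,\widetilde t_d \in R_\pp$ under the canonical surjection $R_\pp \twoheadrightarrow R_\pp/\pp R_\pp = \FF$ and check two things. First, $\widetilde t_1,\dots,\widetilde t_d$ are algebraically independent over $\KK$: a nonzero relation $P(\widetilde t_1,\dots,\widetilde t_d)=0$ with $P \in \KK[X_1,\dots,X_d]$ would reduce modulo $\pp R_\pp$ to a nontrivial algebraic relation among the $t_i$, which is impossible. Second, and consequently, for every nonzero $P$ the element $P(\widetilde t_1,\dots,\widetilde t_d)$ has nonzero image in $\FF$, hence does not lie in the maximal ideal $\pp R_\pp$ of the local ring $R_\pp$, hence is a unit of $R_\pp$. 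Therefore the polynomial subring $\KK[\widetilde t_1,\dots,\widetilde t_d] \subseteq R_\pp$ extends inside $R_\pp$ to the subfield
\[
\mathbb{L} \;:=\; \KK(\widetilde t_1,\dots,\widetilde t_d) \;\subseteq\; R_\pp,
\]
which is a rational function field in $d$ variables.

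Finally, the composite $\mathbb{L} \hookrightarrow R_\pp \twoheadrightarrow \FF$ is injective, since its kernel is a proper ideal of the field $\mathbb{L}$, and its image is exactly $\KK(t_1,\dots,t_d) \subseteq \FF$; thus $\mathbb{L}$ maps isomorphically onto $\KK(t_1,\dots,t_d)$. Because $\FF$ is finite separable over $\KK(t_1,\dots,t_d)$ and $\trdeg_\KK \FF = d = \trdeg_\KK \mathbb{L}$, transporting along this isomorphism shows that $\mathbb{L}\hookrightarrow\FF$ is a separable algebraic extension, which is the assertion. The genuine mathematical content is concentrated in the separable-generation statement for extensions of perfect fields; the only points needing a little care are verifying that the lifted elements generate a \emph{field} (not just a subring) of $R_\pp$ via the unit argument above, and checking that separability and algebraicity over $\KK(t_1,\dots,t_d)$ pass correctly to $\mathbb{L}$ through the identification.
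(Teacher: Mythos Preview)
Your argument is correct and is precisely the standard construction via a separating transcendence basis: lift such a basis from $\FF$ to $R_\pp$, observe that nonzero polynomial expressions in the lifts are units (since they survive in the residue field), and conclude that the rational function field they generate sits inside $R_\pp$ and maps isomorphically onto the separating subfield of $\FF$. Note that the paper does not actually prove \autoref{lem_extend_field}; it is stated with a citation to \cite[Proposition~3.6~(i)]{NOETH_OPS}, and the argument there is exactly the one you have written.
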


The next proposition contains a technical result of fundamental importance for our approach.

\begin{proposition}[{\cite[Proposition 4.1]{BRUMFIEL_DIFF_PRIM}, \cite[Proposition 3.9]{NOETH_OPS}}]
	\label{prop_ass_gr_rings}
	If $\KK \hookrightarrow \FF$ is a separable algebraic extension, then we have an isomorphism of local rings
	\begin{equation*}
 \varepsilon_{\pp,m}  \; :  \; R_\pp/\pp^{m}R_\pp \;\;\xrightarrow{\cong}\;\; \left(\FF \otimes_\KK R_\pp\right)/\MM^m \quad \text{ for all }\,  m \ge 1.
	\end{equation*}
\end{proposition}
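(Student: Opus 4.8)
The plan is to prove the isomorphism $\varepsilon_{\pp,m} : R_\pp/\pp^m R_\pp \xrightarrow{\cong} (\FF \otimes_\KK R_\pp)/\MM^m$ by first treating the base case $m=1$ and then bootstrapping to arbitrary $m$ via the associated graded rings. For $m=1$, both sides are residue fields: the left side is $R_\pp/\pp R_\pp = \FF$ by definition, and the right side is $(\FF \otimes_\KK R_\pp)/\MM \cong \FF$ because $\MM$ is the kernel of the multiplication map $\FF \otimes_\KK R_\pp \to \FF$, which is surjective. One must check the natural ring map $R_\pp \to \FF \otimes_\KK R_\pp$ (via the right factor, $r \mapsto 1 \otimes r$) descends to an isomorphism on these quotients; this amounts to observing that this composite $R_\pp \to (\FF \otimes_\KK R_\pp)/\MM \cong \FF$ is exactly the residue map, so the kernel is $\pp R_\pp$.

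Next I would pass to the associated graded rings with respect to the filtrations by powers of $\pp R_\pp$ and of $\MM$. The map $\varepsilon_{\pp,m}$ for varying $m$ is induced by a filtered ring homomorphism $R_\pp \to \FF \otimes_\KK R_\pp$ which sends $\pp^n R_\pp$ into $\MM^n$ (indeed $\pp R_\pp \cdot \FF \otimes_\KK R_\pp \subseteq \MM$ since $\MM$ contains all $1 \otimes r - r \otimes 1$ pushed to $\FF \otimes R_\pp$, i.e. the image of $\Diag$, and it contains $\pp R_\pp$ on the right factor because that maps to $0$ in $\FF$). It therefore induces a graded ring homomorphism
\[
\gr(\varepsilon_\pp) \;:\; \bigoplus_{n\ge 0} \pp^n R_\pp/\pp^{n+1}R_\pp \;\longrightarrow\; \bigoplus_{n \ge 0} \MM^n/\MM^{n+1}.
\]
The strategy is: if $\gr(\varepsilon_\pp)$ is an isomorphism of graded rings, then a standard induction on $m$ using the five lemma on the short exact sequences $0 \to \pp^{m}/\pp^{m+1} \to R_\pp/\pp^{m+1} \to R_\pp/\pp^m \to 0$ and its $\MM$-counterpart yields that each $\varepsilon_{\pp,m}$ is an isomorphism. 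So everything reduces to the graded statement.

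To prove $\gr(\varepsilon_\pp)$ is an isomorphism, I would identify both associated graded rings. For the left side, since $R_\pp$ is a localization of a finitely generated $\KK$-algebra, $\gr_{\pp R_\pp}(R_\pp)$ is a finitely generated $\FF$-algebra generated in degree $1$ by $\pp R_\pp/\pp^2 R_\pp$. For the right side, the key input is separability: $\MM$ is the kernel of $\FF \otimes_\KK R_\pp \to \FF$, and because $\FF$ is separable algebraic over $\KK$, the ring $\FF \otimes_\KK \FF$ is reduced — in fact a finite product of fields — so the diagonal idempotent splits off $\FF$ cleanly; this is what makes $\FF \otimes_\KK R_\pp$ behave like $R_\pp$ "based changed to $\FF$" near $\MM$. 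Concretely, one checks $\MM/\MM^2 \cong \Omega_{R_\pp/\KK} \otimes_{R_\pp} \FF$ on the right side while $\pp R_\pp/\pp^2 R_\pp$ injects into the same Kähler differential module, and separability forces these to agree, then extends multiplicatively.

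The main obstacle is precisely this last identification: controlling the conormal module $\MM/\MM^2$ of the diagonal inside $\FF \otimes_\KK R_\pp$ and matching it with $\pp R_\pp/\pp^2 R_\pp$, which is where the hypothesis $\KK \hookrightarrow \FF$ separable algebraic (as opposed to just any field extension) is essential — without separability $\FF \otimes_\KK \FF$ can be non-reduced and the map $\varepsilon_{\pp,2}$ already fails. I would handle this by reducing, via Lemma~\ref{lem_extend_field} (or rather its hypothesis being in force here), to the statement that for a separable algebraic extension the natural map on modules of principal parts $P^m_{R_\pp/\KK} \otimes \FF \to P^m_{(\FF\otimes_\KK R_\pp)/\FF}$ is an isomorphism, which is the standard flat-base-change behavior of principal parts combined with $\Omega_{\FF/\KK} = 0$. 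Since this is cited from \cite{BRUMFIEL_DIFF_PRIM} and \cite{NOETH_OPS}, I would present the argument at the level of: (1) reduce to the graded statement by the five-lemma induction, (2) reduce the graded statement to degree $1$, i.e. to the conormal modules, (3) invoke separability to compute the conormal module on the right and match it with the left.
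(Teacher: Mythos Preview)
The paper states this proposition without proof, citing \cite{BRUMFIEL_DIFF_PRIM} and \cite{NOETH_OPS}. The argument in those references is not via associated graded rings: since $\FF/\KK$ is separable algebraic it is formally \'etale, so the identity $\FF \to R_\pp/\pp R_\pp$ lifts to a $\KK$-algebra map $\sigma_m \colon \FF \to R_\pp/\pp^m R_\pp$ for every $m$. One then sets $\psi_m(a \otimes r) := \sigma_m(a)\,\bar r$, checks that the generators $\bar s \otimes 1 - 1 \otimes s$ of $\MM$ map into $\pp R_\pp/\pp^m R_\pp$ so that $\psi_m$ kills $\MM^m$, and verifies that the induced map $(\FF \otimes_\KK R_\pp)/\MM^m \to R_\pp/\pp^m R_\pp$ is the two-sided inverse of $\varepsilon_{\pp,m}$.

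Your route has a genuine gap at step (2), ``reduce the graded statement to degree $1$.'' Your degree-$1$ computation is correct, and because both graded rings are generated in degree $1$ you do get surjectivity of $\gr(\varepsilon_\pp)$ in every degree. But injectivity does \emph{not} follow: $R_\pp$ is not assumed regular, so $\gr_{\pp R_\pp}(R_\pp)$ need not be a polynomial ring, and a surjection of graded $\FF$-algebras that is an isomorphism in degrees $0$ and $1$ can still have a kernel in higher degrees. Your attempted fix via principal parts is off target: the map you write, $P^m_{R_\pp/\KK}\otimes\FF \to P^m_{(\FF\otimes_\KK R_\pp)/\FF}$, compares the wrong objects (neither side is $R_\pp/\pp^{m+1}R_\pp$), and flat base change of principal parts along $\KK \to \FF$ says nothing about $R_\pp/\pp^m R_\pp$. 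What actually supplies the missing injectivity is exactly the coefficient-field lift $\sigma_m$ above---it produces a one-sided inverse $\psi_m$ to $\varepsilon_{\pp,m}$---and once that lift is in hand, the associated-graded detour becomes unnecessary.
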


\section{Arithmetic multiplicity and Noetherian operators}
\label{sect_diff_prim_dec_ideals}

In this section we introduce a notion of primary decomposition that is based on differential operators.
We use the notation and results in \autoref{section_recap}.
The setup below is set throughout.

\begin{setup}
	\label{setup_general}
	Let $\KK$ be a field and $R$ be a $\KK$-algebra essentially of finite type over $\KK$.
\end{setup}

Our definition rests on localizing along associated prime ideals $\pp_i$. This is an essential feature.

\begin{definition}
	\label{def_prim_dec}
	Let $I \subset R$ be an ideal with $\Ass(R/I) = \{\pp_1,\ldots,\pp_k\} \subset \Spec(R)$.
	A \emph{differential primary decomposition} of $I$ is a list of pairs $(\pp_1, \AAA_1), \ldots, (\pp_k, \AAA_k)$,
where $\AAA_i \subset \DiffR(R,R/\pp_i)$ is a finite set of differential operators,
such that the following equation holds for  each $\pp \in \Ass(R/I)$:
\begin{equation}
\label{eq:defprimdec}
I_\pp  \,\;=\; \bigcap_{1 \le i \le k \atop \pp_i \subseteq \pp}  \big\lbrace f \in R_\pp \mid \delta'(f) = 0 \text{ for  all }   \delta \in \AAA_i  \big\rbrace.
\end{equation}
Here	 $\delta' \in \Diff_{R_\pp/\KK}(R_\pp,R_\pp/\pp_iR_\pp)$ denotes the image of an
operator $\delta \in \AAA_i$ under  \autoref{lem_localization}~(i).
\end{definition}

The next lemma records fundamental properties of differential primary decompositions.

\begin{lemma}
	\label{lem_basic_prop_weak}
For a differential primary decomposition as in
\autoref{def_prim_dec}, the following holds:
	\begin{enumerate}[\rm (i)]
		\item The ideal $I$ satisfies \autoref{eq:diffprimdec}, i.e.~
		 $I = \big\lbrace f \in R \mid \delta(f) = 0 \text{ for  all } \,\delta \in \AAA_i  \text{ and }
		  \, 1 \le i \le k   \big\rbrace$.
		\item For each index $i \in \{1,\ldots,k\}$, the set   $\,\AAA_i$ of differential operators is non-empty.
	\end{enumerate}
\end{lemma}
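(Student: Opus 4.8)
The plan is to derive both statements from \autoref{eq:defprimdec} by choosing appropriate primes $\pp$ in the intersection condition.

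For part (i), I would argue as follows. Let $J := \big\lbrace f \in R \mid \delta(f) = 0 \text{ for all } \delta \in \AAA_i \text{ and } 1 \le i \le k\big\rbrace$. The inclusion $I \subseteq J$ is the easy direction: if $f \in I$, then for every $i$ we have $f \in I_{\pp_i}$, and since $\pp_i \subseteq \pp_i$ appears in the intersection \autoref{eq:defprimdec} for $\pp = \pp_i$, we get $\delta'(f) = 0$ for all $\delta \in \AAA_i$, where $\delta'$ is the localization of $\delta$. But $R/\pp_i$ is $\pp_i$-torsion-free (it is a domain and elements outside $\pp_i$ act injectively), so $\delta(f) = 0$ follows from $\delta'(f/1) = 0$ by \autoref{lem_localization}~(ii) — more directly, $\delta(f)$ maps to $\delta'(f/1) = 0$ under the injection $R/\pp_i \hookrightarrow R_{\pp_i}/\pp_i R_{\pp_i}$. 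Hence $f \in J$. For the reverse inclusion $J \subseteq I$: it suffices to show $J_\pp \subseteq I_\pp$ for every associated prime $\pp \in \Ass(R/I)$, because an ideal equals the intersection of its localizations at associated primes (and $J \subseteq I$ iff this holds after localizing at each $\pp \in \Ass(R/J)$; one checks $\Ass(R/J) \subseteq \Ass(R/I)$, or more simply uses that $I = \bigcap_{\pp \in \Ass(R/I)} (I_\pp \cap R)$, so containment in each $I_\pp$ suffices). Fix $\pp = \pp_j$. If $f \in J$, then $\delta(f) = 0$ for all $\delta \in \AAA_i$, all $i$; localizing, $\delta'(f/1) = 0$ for all $\delta \in \AAA_i$ with $\pp_i \subseteq \pp_j$. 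By \autoref{eq:defprimdec}, $f/1 \in I_{\pp_j}$. Therefore $J \subseteq \bigcap_{j} (I_{\pp_j} \cap R) = I$.

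For part (ii), suppose for contradiction that $\AAA_i = \emptyset$ for some $i$. Take $\pp = \pp_i$ in \autoref{eq:defprimdec}. On the right-hand side, the condition "$\delta'(f) = 0$ for all $\delta \in \AAA_j$" is vacuously true whenever $\AAA_j = \emptyset$, so the set $\{f \in R_{\pp_i} \mid \delta'(f) = 0 \ \forall \delta \in \AAA_i\}$ equals all of $R_{\pp_i}$. Hence that factor contributes nothing to the intersection, and we obtain
\[
I_{\pp_i} \;=\; \bigcap_{\substack{j \neq i \\ \pp_j \subseteq \pp_i}} \big\lbrace f \in R_{\pp_i} \mid \delta'(f) = 0 \text{ for all } \delta \in \AAA_j \big\rbrace.
\]
Each set on the right, being of the form $\Sol(\AAA_j')$ for operators into $R_{\pp_i}/\pp_j R_{\pp_i}$, contains $\pp_j R_{\pp_i}$ by the argument in \autoref{rem_incl_power_pp} (more precisely it is a $\pp_j R_{\pp_i}$-stable submodule, hence either contains $\pp_j R_{\pp_i}$ or, being cut out by finitely many operators of bounded order $m$, contains $\pp_j^{m+1} R_{\pp_i}$). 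The key point is that $\pp_i$ is \emph{not} contained in $\bigcup_{j \neq i, \pp_j \subseteq \pp_i} \pp_j$ — if it were, prime avoidance would force $\pp_i \subseteq \pp_j$ for some $j \neq i$, contradicting that the $\pp_j$ are pairwise distinct minimal-among-themselves (more carefully: $\pp_i \in \Ass(R/I)$ and distinct associated primes can be incomparable, but even when $\pp_j \subsetneq \pp_i$ we use that $\pp_i R_{\pp_i}$ is the unique maximal ideal of $R_{\pp_i}$ and strictly contains the localizations of the smaller $\pp_j$). Then pick an element $g \in \pp_i R_{\pp_i}$ avoiding all the $\pp_j R_{\pp_i}$ for $j \neq i$; a suitable power $g^N$ lies in every $\Sol(\AAA_j')$ (by the order bound, taking $N$ larger than all the relevant $m+1$... but $g$ is not in $\pp_j$, so this needs the argument reformulated). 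The cleanest route: $I_{\pp_i}$ would equal an intersection of submodules each of which, localized further at $\pp_i$, becomes the whole ring after inverting the (now unit) generators outside $\pp_i R_{\pp_i}$ — concretely, localizing the displayed equation at the primes $\pp_j$ ($j\ne i$, $\pp_j \subseteq \pp_i$) and using \autoref{eq:defprimdec} again for those primes shows the right side describes an ideal whose only associated prime is among $\{\pp_j : j \ne i, \pp_j \subseteq \pp_i\}$, so $\pp_i \notin \Ass$; but $\pp_i \in \Ass(R/I) = \Ass(R_{\pp_i}/I_{\pp_i})$ after localization, a contradiction.

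The main obstacle is part (ii): making rigorous the claim that dropping the $\AAA_i$-factor changes the set of associated primes of the right-hand side, so that $\pp_i$ can no longer be associated. The subtlety is that operators into $R_{\pp}/\pp_j R_{\pp}$ only control membership modulo powers of $\pp_j$ (via \autoref{rem_incl_power_pp}), so one must combine this with \autoref{eq:defprimdec} applied at the smaller primes, or invoke \autoref{lem_diff_ops_correspon_to_ideals} to identify each $\Sol(\AAA_j')$ locally at $\pp_j$ with a $\pp_j$-primary-plus-bigger structure, concluding that $\pp_i$ is not embedded in the resulting intersection. I would present this as: localize \autoref{eq:defprimdec} (with $\pp = \pp_i$, $\AAA_i = \emptyset$) at $\pp_i$ to get $I_{\pp_i}$ as an intersection of ideals $Q_j$ with $\pp_j R_{\pp_i} \subseteq \sqrt{Q_j}$ for $j \ne i$; then $\pp_i R_{\pp_i} \in \Ass(R_{\pp_i}/I_{\pp_i})$ forces $\pp_i R_{\pp_i}$ to contain, hence equal by maximality, or be associated to, one of the $Q_j$, so $\pp_i R_{\pp_i} \in \Ass(R_{\pp_i}/Q_j)$; but $\Ass(R_{\pp_i}/Q_j) \subseteq \{\text{primes containing }\pp_j\} \cap \Ass$ which by \autoref{eq:defprimdec} for $\pp_j$ does not include $\pp_i$ unless $\pp_i \in \Ass(R/I)$ arises from $\pp_j$'s contribution — a closed loop that is only broken by $\AAA_i \ne \emptyset$. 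This bookkeeping is the delicate part; everything else is a routine application of localization and torsion-freeness.
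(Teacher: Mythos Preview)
Your argument for part (i) is correct and close in spirit to the paper's, though the paper packages things more efficiently: it fixes a minimal primary decomposition $I=Q_1\cap\cdots\cap Q_k$ and applies \autoref{lem_localization}~(ii) once to get, for each $\pp\in\Ass(R/I)$, the identity
\[
\bigcap_{i:\,\pp_i\subseteq\pp} Q_i \;=\; I R_\pp\cap R \;=\; \bigcap_{i:\,\pp_i\subseteq\pp}\{f\in R\mid \delta(f)=0\text{ for all }\delta\in\AAA_i\}.
\]
Intersecting over all $\pp$ then gives (i) immediately. The point is not merely cosmetic: this intermediate identity is exactly what makes (ii) short.

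Your part (ii) has a genuine gap. You correctly reduce to showing that, with $\AAA_i$ removed, $\pp_i$ cannot be an associated prime of the resulting intersection, but none of your proposed routes is rigorous. The attempt via \autoref{rem_incl_power_pp} and prime avoidance stalls because the individual solution sets $\Sol(\AAA_j')$ are a priori only $\KK$-subspaces, not ideals, so speaking of their radicals or associated primes is not meaningful; and the ``cleanest route'' paragraph never escapes the circularity you yourself flag. What you are missing is precisely the displayed identity above. Apply it at each $\pp\in\Ass(R/I)$ with $\pp\subsetneq\qqq:=\pp_i$ and intersect: the left sides give $\bigcap_{j:\,\pp_j\subsetneq\qqq}Q_j$, and the right sides give $\bigcap_{j:\,\pp_j\subsetneq\qqq}\{f\in R\mid\delta(f)=0\ \forall\,\delta\in\AAA_j\}$. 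If $\AAA_i=\emptyset$, the latter equals $\bigcap_{j:\,\pp_j\subseteq\qqq}\{\cdots\}$, which by the same identity at $\pp=\qqq$ is $IR_\qqq\cap R=\bigcap_{j:\,\pp_j\subseteq\qqq}Q_j$. Thus $Q_i$ is redundant, contradicting $\qqq\in\Ass(R/I)$. The moral: having first identified the solution sets with genuine primary components, the associated-prime bookkeeping becomes trivial rather than ``delicate''.
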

\begin{proof}
Fix a minimal primary decomposition $I=Q_1 \cap \cdots \cap Q_k$ where $Q_i$ is a $\pp_i$-primary ideal.
	
(i)  By applying   \autoref{lem_localization} (ii) to the localization at $\pp  \in \Ass(R/I)$, we get 
	\begin{equation} \label{eq_intersect_pp}
		\bigcap_{i: \pp_i \subseteq \pp} Q_i \;=\; IR_\pp \cap R
  \quad =\; \bigcap_{i: \pp_i \subseteq \pp}  \big\lbrace f \in R \mid \delta(f) = 0 \text{ for  all }   \delta \in \AAA_i  \big\rbrace.
	\end{equation}
The equality on the left holds because
	 $Q_i$ is a $\pp_i$-primary ideal (see, e.g., \cite[Theorem 4.1]{MATSUMURA}).
	By intersecting the equation above over all $\pp \in \Ass(R/I)$, we obtain the 
	desired identity \autoref{eq:diffprimdec}.
	
	(ii) Suppose that $\AAA_i=\emptyset$ for some $i \in \{1,\ldots,k\}$, and set $\qqq = \pp_i$.
	By intersecting \autoref{eq_intersect_pp} for all $\pp \in \Ass(R/I)$ with $\pp \subsetneq \qqq$, we obtain
	$\,\bigcap_{j: \pp_j \subsetneq \qqq} Q_j  \;=\; \bigcap_{j: \pp_j \subsetneq \qqq}  
	\big\lbrace f \in R \mid \delta(f) = 0 \text{ for  all }   \delta \in \AAA_j  \big\rbrace $.
	Here each $\pp_j$ is strictly contained in $\qqq$.	
	Since $\AAA_i=\emptyset$, by definition,
	 we conclude $\bigcap_{j: \pp_j \subsetneq \qqq} Q_j  = IR_\qqq \cap R$.	
 This contradicts the hypothesis $\,\qqq \in \Ass(R/I)$, and so the result follows.
\end{proof}

\begin{remark} Differential primary decompositions always exist.
Let $I = Q_1  \cap \cdots \cap Q_k$ where each $Q_i$ is a $\pp_i$-primary ideal. 
By \cite[Theorem A]{NOETH_OPS}, there is a finite set $\BBB_i' \subset \Diff_{R_{\pp_i}/\KK}(R_{\pp_i},k(\pp_i))$ of Noetherian operators such that $Q_iR_{\pp_i} = \lbrace f \in R_{\pp_i} \mid \delta(f) = 0 \text{ for  all }   \delta \in \BBB_i'  \rbrace$.
By lifting $\BBB_i'$ to a set of operators $\BBB_i \subset \DiffR(R,R/\pp_i)$, it follows that
$(\pp_1, \BBB_1), \ldots, (\pp_k, \BBB_k)$ is a differential primary decomposition of $I$.
Hence the requirement in  \autoref{def_prim_dec} can always be achieved.
\end{remark}

What we are looking for here is more ambitious.
We wish to represent any ideal $I$ by a \emph{minimal differential primary decomposition}.
This should reflect the intrinsic complexity of the affine scheme defined by $I$.
This brings us back to the notion from  \cite{STV_DEGREE} we saw in the Introduction.

\begin{definition}
	\label{def_arith_mult}
		For an ideal $I \subset R$, its \emph{arithmetic multiplicity} is the positive integer 
$$ \amult(I) \,\,\, :=  \sum_{\pp \in \Ass(R/I)} \!\!\! \text{length}_{R_\pp}\left(\HH_{\pp}^0\left(R_\pp/IR_\pp\right)\right) \quad =
 \sum_{\pp \in \Ass(R/I)} \!\!\! \! \text{length}_{R_\pp}\left(\frac{\left(IR_\pp:_{R_\pp} {(\pp R_\pp)}^{\infty}\right)}{IR_\pp}		\right). $$
 In our Introduction and in \cite{STV_DEGREE},
 the length inside the sum was denoted ${\rm mult}_I(\pp)$ and called the
 multiplicity of $I$ along $\pp$. It is the length of the largest ideal of finite length in the ring
 $R_{\pp}/I R_{\pp}$.
\end{definition}

The next theorem is our main result in this section. 
An ideal $I$ always has a  differential primary decomposition whose total number of 
operators is equal to the arithmetic multiplicity. 
Moreover,  ${\rm amult}(I)$ is a lower bound on the size of any
differential primary decomposition.

\begin{theorem} \label{thm:main}	 
Assume \autoref{setup_general} with $\KK$ perfect. 
Fix an ideal $I \subset R$ with $\Ass(R/I) = \{\pp_1,\ldots,\pp_k\} \subset \Spec(R)$.
The size of a differential primary decomposition is at least $ {\rm amult}(I)$, and this
upper bound is tight. 
More precisely:
\begin{enumerate}[\rm (i)]
\item $I$ has a differential primary decomposition $(\pp_1, \AAA_1)$, $\ldots$, $(\pp_k, \AAA_k)$ such that 
$ |\AAA_i | = {\rm mult}_I(\pp_i)$.
\item If $(\pp_1, \AAA_1)$, $\ldots$, $(\pp_k, \AAA_k)$ is a differential primary decomposition for $I$, then 
$|\AAA_i | \, \ge  \,{\rm mult}_I(\pp_i)$.
\end{enumerate}		
\end{theorem}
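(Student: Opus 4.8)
The heart of the matter is local, so the plan is to reduce everything to a statement about the Artinian local ring $B_i := R_{\pp_i}/IR_{\pp_i}$, more precisely about its submodule of finite length $H^0_{\pp_i}(B_i) = (IR_{\pp_i} :_{R_{\pp_i}} (\pp_i R_{\pp_i})^\infty)/IR_{\pp_i}$, whose length is $\mathrm{mult}_I(\pp_i)$. First I would fix a minimal primary decomposition $I = Q_1 \cap \cdots \cap Q_k$ and observe, using \autoref{def_prim_dec} together with \autoref{lem_basic_prop_weak}, that after localizing at $\pp_i$ the only pairs that contribute to \autoref{eq:defprimdec} with $\pp = \pp_i$ are those indexed by $j$ with $\pp_j \subseteq \pp_i$; among these, $j=i$ is the "new" one, and the contribution of the others is exactly $(\bigcap_{\pp_j \subsetneq \pp_i} Q_j)R_{\pp_i} \cap R_{\pp_i}$, which already equals $B_i$-lift of that intersection but is strictly larger than $IR_{\pp_i}$. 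So the set $\AAA_i$, read as differential operators on $R_{\pp_i}$ into $\FF_i := k(\pp_i)$ (after killing $\pp_i R_{\pp_i}$ as allowed by \autoref{rem_incl_power_pp}, since each $\delta' \in \Diff^m_{R_{\pp_i}/\KK}(R_{\pp_i}, R_{\pp_i}/\pp_i R_{\pp_i})$ kills a power of $\pp_i$ on the relevant finite-length module), must cut out precisely $H^0_{\pp_i}(B_i)$ inside $B_i$ — i.e. $\AAA_i$ must separate the length-$\mathrm{mult}_I(\pp_i)$ module $H^0_{\pp_i}(B_i)$ from zero.

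Next I would pass through the dictionary of \autoref{lem_diff_ops_correspon_to_ideals}: since $H^0_{\pp_i}(B_i)$ has finite length it is annihilated by some $\MM_i^{m+1}$ (with $\MM_i$ the maximal ideal of $\FF_i \otimes_\KK R_{\pp_i}$), so by \autoref{prop_represen_diff_opp} and \autoref{lem_diff_ops_correspon_to_ideals}(i),(ii) the operators in $\AAA_i$ live in the finite-dimensional $\FF_i$-vector space $\Diff^m_{R_{\pp_i}/\KK}(R_{\pp_i},\FF_i) \cong \Hom_{\FF_i}(\,\cdot\,, \FF_i)$, and the condition "$\AAA_i$ cuts out $IR_{\pp_i}$ among the elements already cut out by the smaller primes" translates into: the $\FF_i$-span of $\AAA_i$ inside the dual of the length-$\mathrm{mult}_I(\pp_i)$ module $H^0_{\pp_i}(B_i)$ must be all of that dual (by part (iii),(iv), the vanishing locus of a set of functionals on a module corresponds to the submodule on which they all vanish, so to cut down to $0$ we need a spanning set of functionals). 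Hence $|\AAA_i| \ge \dim_{\FF_i} \Hom_{\FF_i}(H^0_{\pp_i}(B_i),\FF_i)$. Here I would use that $H^0_{\pp_i}(B_i)$ is a module of finite length over the Artinian local ring $B_i$ with residue field $\FF_i$ (since $\KK$ is perfect, \autoref{lem_extend_field} and \autoref{prop_ass_gr_rings} let us identify the relevant graded/complete structures so that lengths are measured over $\FF_i$), and that for such a module the $\FF_i$-dimension of its $\FF_i$-dual is at least its length — indeed, length over an Artinian local ring dominates the dimension of the residue-field dual, because a generating set of the dual of size $n$ would exhibit an injection into $\FF_i^n$, forcing length $\le n$. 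This gives part (ii): $|\AAA_i| \ge \mathrm{mult}_I(\pp_i)$, and summing over $i$ yields $\ge \mathrm{amult}(I)$.

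For part (i) I would run the same dictionary in reverse. Choose for each $i$ the submodule $\mathbb{V}_i \subset \FF_i \otimes_\KK R_{\pp_i}$ corresponding (via \autoref{lem_diff_ops_correspon_to_ideals}(iii)) to the intersection $(\bigcap_{\pp_j \subsetneq \pp_i} Q_j)R_{\pp_i}$ — the "ambient" module against which $I$ must be cut — and pick a minimal set of $\FF_i$-functionals on $\mathbb{V}_i/(IR_{\pp_i}\text{-image})$ that cut out $IR_{\pp_i}$; minimality of that set means its cardinality equals $\dim_{\FF_i}$ of the dual of a length-$\mathrm{mult}_I(\pp_i)$ module, and for a module over an Artinian local ring one can always choose a Jordan–Hölder-type filtration realizing the dual dimension equal to the length is \emph{not} automatic — rather, one picks functionals dual to a composition series, giving exactly $\mathrm{mult}_I(\pp_i)$ of them. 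These functionals are operators in $\Diff^m_{R_{\pp_i}/\KK}(R_{\pp_i},\FF_i)$; lifting them first to $\Diff^m_{R_{\pp_i}/\KK}(R_{\pp_i}, R_{\pp_i}/\pp_i R_{\pp_i})$ and then, via \autoref{lem_localization}(i) and a clearing-of-denominators argument as in \autoref{rem_explicit_localiz}, to honest operators $\AAA_i \subset \DiffR(R,R/\pp_i)$ of the same cardinality produces the desired decomposition with $|\AAA_i| = \mathrm{mult}_I(\pp_i)$.

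\textbf{Main obstacle.} The delicate point is the passage between "$\AAA_i$ as differential operators into $R_{\pp_i}/\pp_iR_{\pp_i}$" and "$\AAA_i$ as $\FF_i$-linear functionals on a finite-length module," and in particular controlling the arithmetic: showing that the minimal number of functionals needed to separate a finite-length module $N$ over the Artinian local ring $B_i$ from $0$ is exactly $\mathrm{length}_{B_i}(N)$ — not $\dim_{\FF_i} N/\mathfrak{m}_iN$ or something smaller — and that the contribution of the strictly-smaller primes is genuinely a \emph{fixed ambient module} independent of the choice of $\AAA_i$, so that the count is about $H^0_{\pp_i}(B_i)$ and nothing else. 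This requires carefully unwinding \autoref{eq:defprimdec} to see that the pairs with $\pp_j = \pp_i$ for $j \neq i$ cannot occur (associated primes are distinct) and that the pairs with $\pp_j \subsetneq \pp_i$ contribute exactly $\bigcap_{\pp_j \subsetneq \pp_i} Q_j$ after localization — which is where \autoref{lem_localization}(ii) and the primary decomposition identity $(\bigcap_{\pp_j\subseteq\pp_i}Q_j) = IR_{\pp_i}\cap R$ do the work.
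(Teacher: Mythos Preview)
Your plan follows the paper's strategy closely: localize at each $\pp_i$, isolate the finite-length module $J_\pp/I_\pp$ with $J=(I:_R\pp^\infty)$, and count via the dictionary of \autoref{lem_diff_ops_correspon_to_ideals}. You correctly identify that the contribution of the strictly smaller primes is $J_\pp$ (this is the paper's equation \autoref{eq_intersect_lower_bound}, and your ``Main obstacle'' paragraph is right that it needs the double-localization trick through \autoref{lem_localization}(ii)).

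There is, however, a genuine gap in your treatment of part (ii). You write that the operators in $\AAA_i$ can be viewed as elements of ``the $\FF_i$-dual of $H^0_{\pp_i}(B_i)$'' and that they must $\FF_i$-span this dual. But a differential operator $\delta\in\Diff^m_{R_\pp/\KK}(R_\pp,\FF_i)$ is only $\KK$-linear (or $\LL$-linear after \autoref{lem_extend_field}), not $\FF_i$-linear: for $\alpha\in\FF_i$ lifted via a coefficient field and $f\in J_\pp$ one has $\delta(\alpha f)=\alpha\,\delta(f)+[\delta,\alpha](f)$ with a nonzero commutator term. So restricting $\delta$ to $J_\pp$ does \emph{not} give an $\FF_i$-functional on $J_\pp/I_\pp$, and your injection $J_\pp/I_\pp\hookrightarrow\FF_i^{\,|\AAA_i|}$ is not available as stated. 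The paper circumvents this by never claiming $\FF_i$-linearity on $J_\pp/I_\pp$ itself. Instead it passes to $\mathcal{Q}=(\FF_i\otimes_\KK J_\pp)/\MM^{m+1}(\FF_i\otimes_\KK J_\pp)$, where the $\FF_i$-action sits on the left tensor factor and therefore commutes with everything; the operators become honest elements of $\Hom_{\FF_i}(\mathcal{Q},\FF_i)$ via \autoref{lem_diff_ops_correspon_to_ideals}(i) applied with $M=J_\pp$, and one obtains an injection $J_\pp/I_\pp\hookrightarrow(\FF_i\otimes_\KK J_\pp)/\mathbb{V}$ through \autoref{eq_Ipp_as_as_equat_Jpp}--\autoref{eq_dualize_short_E}. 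The inequality $|\AAA_i|\ge\dim_{\FF_i}\bigl((\FF_i\otimes_\KK J_\pp)/\mathbb{V}\bigr)\ge\mathrm{length}_{R_\pp}(J_\pp/I_\pp)$ then follows.

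For part (i), your ``pick functionals dual to a composition series'' is the right intuition, but you are missing the mechanism that makes the count exact and makes the resulting operators satisfy \autoref{eq:defprimdec}. The paper supplies this via \autoref{prop_approx}: one approximates $I_\pp$ and $J_\pp$ by $\MM$-primary ideals $\aaa_m,\bbb_m\subset\FF_i\otimes_\KK R_\pp$, obtains the short exact sequence \autoref{eq_short_exact} whose cokernel $\Hom_{\FF_i}(\bbb_m/\aaa_m,\FF_i)$ has $\FF_i$-dimension exactly $\mathrm{mult}_I(\pp_i)$ once $m\ge m_0$ (this equality of $\FF_i$-dimension with $R_\pp$-length is precisely \autoref{prop_approx}(ii), resting on \autoref{prop_ass_gr_rings}), and then chooses a compatible basis that stabilizes. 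Without \autoref{prop_approx} you have no direct way to equate the $\FF_i$-dimension of anything with $\mathrm{length}_{R_\pp}(J_\pp/I_\pp)$, and your proposal does not invoke it.
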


The proof of  \autoref{thm:main} appears further below.
We start with a proposition that transfers the approximation technique
used in \cite{NOETH_OPS, PRIM_IDEALS_DIFF_EQS}
to ideals that are not necessarily primary. 

\begin{proposition} \label{prop_approx}
	Fix an ideal $I \subset R$ and an associated prime $\pp \in \Ass(R/I)$.
	Let $J = (I :_R \pp^\infty)$ and let $\,\FF = k(\pp)$ be the residue field of $\pp$.
	Assume that $\KK \hookrightarrow \FF$ is a separable algebraic extension.
	Then the following statements hold: 
	\begin{enumerate}[\rm (i)]
		\item There exists a positive integer $m_0$ such that, for all $m \ge m_0$, we have the isomorphism 
		$$
		\frac{J_\pp}{I_\pp} \; \, \xrightarrow{\cong} \,\; \frac{J_\pp + \pp^mR_\pp}{I_\pp + \pp^mR_\pp}.
		$$
		\item 
Using the canonical map 	$\,\gamma_{\pp} :  R_\pp \;\rightarrow \;\FF \otimes_\KK R_\pp$, we
	define the ideals
$$		 \aaa_m = \gamma_\pp(I_\pp) + \MM^m \quad \text{ and  } \quad \bbb_m = \gamma_\pp(J_\pp) + \MM^m .$$
		Then, if we choose $m \ge m_0$  as in part (i), we obtain the following isomorphism
		$$
		J_\pp/I_\pp  \; \xrightarrow{\cong} \; \bbb_m/\aaa_m.
		$$
		\item 
		Let $\mathcal{E}_m$ be the  $\,(R_\pp \otimes_\KK R_\pp)$-submodule of $\Diff_{R_\pp/\KK}^{m-1}\left(R_\pp,\FF\right)$ that is determined by  the $\MM$-primary ideal
		$\,\aaa_m $ as in \autoref{lem_diff_ops_correspon_to_ideals}~(iii).
		Then the localized ideal $I_\pp$ is recovered as follows:
		$$
		I_\pp  \, = \, \bigcap_{m=1}^\infty \Sol(\mathcal{E}_m).
		$$
	\end{enumerate}
\end{proposition}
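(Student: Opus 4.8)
The strategy is to treat the three parts in order, since each feeds the next. For part (i), the key point is that $J_\pp/I_\pp$ is a finite-length $R_\pp$-module: indeed $J = (I:_R \pp^\infty)$ is the $\pp$-primary component enlarged so that $J_\pp/I_\pp = \HH_\pp^0(R_\pp/I_\pp)$, which is annihilated by some power of $\pp R_\pp$ and is finitely generated, hence has finite length. By the Artin–Rees lemma applied to the $\pp R_\pp$-adic filtration on $R_\pp$ and the submodules $I_\pp \subseteq J_\pp$, for $m$ large we have $\pp^m R_\pp \cap J_\pp \subseteq I_\pp$ (more precisely $J_\pp \cap (I_\pp + \pp^m R_\pp) = I_\pp + (\pp^m R_\pp \cap J_\pp)$ stabilizes), which forces the natural surjection $J_\pp/I_\pp \twoheadrightarrow (J_\pp + \pp^m R_\pp)/(I_\pp + \pp^m R_\pp)$ to be an isomorphism for all $m \ge m_0$. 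Alternatively, since $J_\pp/I_\pp$ has finite length it equals $\pp^m(J_\pp/I_\pp) \oplus (\text{stuff})$ only trivially, so $\pp^m J_\pp \subseteq I_\pp$ for $m \gg 0$, giving the same conclusion directly.

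For part (ii), I would use \autoref{prop_ass_gr_rings}: the hypothesis that $\KK \hookrightarrow \FF$ is separable algebraic gives isomorphisms $\varepsilon_{\pp,m} : R_\pp/\pp^m R_\pp \xrightarrow{\cong} (\FF \otimes_\KK R_\pp)/\MM^m$ compatible with the maps $R_\pp \to \FF \otimes_\KK R_\pp$ (via $\gamma_\pp$) and $R_\pp \to R_\pp/\pp^m R_\pp$. Under this isomorphism, $\aaa_m = \gamma_\pp(I_\pp) + \MM^m$ corresponds to the image of $I_\pp + \pp^m R_\pp$ in $R_\pp/\pp^m R_\pp$, and similarly $\bbb_m$ corresponds to the image of $J_\pp + \pp^m R_\pp$; hence $\bbb_m/\aaa_m \cong (J_\pp + \pp^m R_\pp)/(I_\pp + \pp^m R_\pp)$, which by part (i) is $\cong J_\pp/I_\pp$ for $m \ge m_0$.

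For part (iii), the inclusion $I_\pp \subseteq \bigcap_m \Sol(\mathcal{E}_m)$ is clear: each $\mathcal{E}_m$ consists of operators in $\Diff^{m-1}_{R_\pp/\KK}(R_\pp,\FF)$ determined by an ideal $\aaa_m \supseteq \gamma_\pp(I_\pp)$, and by \autoref{lem_diff_ops_correspon_to_ideals}~(iv) we have $\Sol(\mathcal{E}_m) = \aaa_m \cap R_\pp$ (contraction along $\gamma_\pp$), which contains $I_\pp$ since $\gamma_\pp(I_\pp) \subseteq \aaa_m$. For the reverse inclusion, $\bigcap_m \Sol(\mathcal{E}_m) = \bigcap_m (\aaa_m \cap R_\pp) = \gamma_\pp^{-1}\big(\bigcap_m \aaa_m\big)$; using $\varepsilon_{\pp,m}$ to identify $\aaa_m \cap R_\pp$ with the preimage in $R_\pp$ of the image of $I_\pp + \pp^m R_\pp$, this contraction is just $I_\pp + \pp^m R_\pp$, so $\bigcap_m \Sol(\mathcal{E}_m) = \bigcap_m (I_\pp + \pp^m R_\pp)$. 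Since $R_\pp$ is Noetherian local, Krull's intersection theorem gives $\bigcap_m \pp^m R_\pp = 0$, and applying it to the module $R_\pp/I_\pp$ yields $\bigcap_m (I_\pp + \pp^m R_\pp) = I_\pp$, completing the proof.

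The main obstacle I anticipate is bookkeeping the identification in part (iii): one must check that $\Sol(\mathcal{E}_m)$, defined via \autoref{lem_diff_ops_correspon_to_ideals} as a contraction of an $\MM$-primary ideal in $\FF \otimes_\KK R_\pp$, really coincides with $\gamma_\pp^{-1}(\aaa_m)$ and that this matches $I_\pp + \pp^m R_\pp$ under $\varepsilon_{\pp,m}$ — in particular that $\gamma_\pp^{-1}(\MM^m) = \pp^m R_\pp$, which is exactly the content of \autoref{prop_ass_gr_rings}. Once that compatibility is pinned down, the rest is the Artin–Rees/Krull machinery, which is routine.
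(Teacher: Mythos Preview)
Your proposal is correct and follows essentially the same route as the paper's proof: for (i) the paper also reduces to showing $J_\pp \cap (I_\pp + \pp^m R_\pp) = I_\pp$ via the modular law and the observation that $J_\pp \cap \pp^m R_\pp \subseteq I_\pp$ for $m \gg 0$ (argued directly from $J_\pp = (I_\pp :_{R_\pp} (\pp R_\pp)^\infty)$ rather than invoking Artin--Rees, but this is your ``alternative'' finite-length argument); for (ii) and (iii) the paper likewise applies \autoref{prop_ass_gr_rings} and \autoref{lem_diff_ops_correspon_to_ideals}~(iv) to identify $\Sol(\mathcal{E}_m)$ with $I_\pp + \pp^m R_\pp$ and then concludes via Krull's Intersection Theorem.
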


\begin{proof}
	(i) We have a canonical surjection ${J_\pp} \twoheadrightarrow \frac{J_\pp + \pp^mR_\pp}{I_\pp + \pp^mR_\pp}$. 
	From this we obtain the isomorphism 
	$$
	\frac{J_\pp}{J_\pp \cap (I_\pp + \pp^mR_\pp)} \; \xrightarrow{\cong} \; \frac{J_\pp + \pp^mR_\pp}{I_\pp + \pp^mR_\pp}.
	$$
We must show that	$J_\pp \cap (I_\pp + \pp^mR_\pp) = I_\pp$ for $m \gg 0$.
The left hand side contains the right hand side for any $m \ge 0$.
	On the other hand, since $J_\pp \supset I_\pp$,  we have
	$\, J_\pp \cap (I_\pp + \pp^mR_\pp) = I_\pp + J_\pp \cap \pp^mR_\pp$.
	By localizing we get $J_\pp = (I_\pp :_{R_\pp} (\pp R_\pp)^\infty)$, and this implies $J_\pp \cap \pp^mR_\pp \subset I_\pp$ for $m \gg 0$.
	
	(ii) From \autoref{prop_ass_gr_rings} we have the isomorphisms  
	$$
	R_\pp/\left(I_\pp + \pp^mR_\pp\right) \xrightarrow{\cong} \left(\FF \otimes_\KK R_\pp\right)/\aaa_m \;\;\text{ and }\;\; R_\pp/\left(J_\pp + \pp^mR_\pp\right) \xrightarrow{\cong} \left(\FF \otimes_\KK R_\pp\right)/\bbb_m.
	$$
Hence the result is obtained by combining these isomorphisms with that in part~(i).
	
(iii) \autoref{lem_diff_ops_correspon_to_ideals}~(iv) and \autoref{prop_ass_gr_rings} give the equality 
$ \bigcap_{m=1}^\infty \! \Sol(\mathcal{E}_m) = \bigcap_{m=1}^\infty \! \left(I_\pp + \pp^mR_\pp\right)$.
Finally, by Krull's Intersection Theorem  \cite[Theorem 8.10]{MATSUMURA}, the right hand side equals $I_\pp$.	
\end{proof}

From now on we shall use the following notation.
If $W \subset R$ is a multiplicatively closed set and $\delta$ is a  differential operator, then $\mathfrak{L}_W(\delta)$ 
denotes the extension/localization of $\delta$ with respect to $W$ as in \autoref{lem_localization}.
For a prime ideal $\pp \in \Spec(R)$, we set $\mathfrak{L}_\pp(\delta) =  \mathfrak{L}_{R \setminus \pp}(\delta)$.

\begin{proof}[Proof of  \autoref{thm:main}]
We write  $I=Q_1 \cap \cdots \cap Q_k$, where $Q_i$ is a $\pp_i$-primary ideal, and
$\pp_i \neq \pp_j$ for $i \neq j$. We also assume that
the $k$ indices are ordered such that $\pp_j \subsetneq \pp_i$ implies $j < i$.
	
\smallskip
	
(i) We  proceed by induction. Fix $i \in \{1,\ldots, k\}$	and assume the following hypotheses to hold:
\begin{enumerate}[(a)]
\item there exist $\AAA_1,\ldots,\AAA_{i-1}$ with $\AAA_j \subset \DiffR(R,R/\pp_j)$ and $|\AAA_j| = 
{\rm mult}_I(\pp_j)$	for $1 \le j \le i-1$;
		\item \label{induct_hyp_b} for all $1 \le j \le i-1$, the following identity holds:
\begin{equation}
\label{eq:identityholds}
		\bigcap_{
				1 \le \ell \le j \atop
				\pp_\ell \subseteq \pp_j			
		} Q_{\ell}R_{\pp_j} \; = \;  
		\bigcap_{
				1 \le \ell \le j \atop
				\pp_\ell \subseteq \pp_j
		} 
		\big\lbrace f \in R_{\pp_j} \mid \mathfrak{L}_{\pp_j}(\delta)(f) = 0 \text{ for all } \delta \in \AAA_\ell \big\rbrace.
\end{equation}
	\end{enumerate}
	These hold vacuously for the base case $i=1$.	
To simplify notation, we set $\pp = \pp_i$ and $\FF=k(\pp)$. 

Our aim is to find  $\AAA = \AAA_i \subset \DiffR(R,R/\pp)$ such that $|\AAA | =
{\rm mult}_I(\pp)$ and \autoref{eq:identityholds} holds with $j=i$.
For each $\xi \in \Diff_{R_\pp/\KK}(R_\pp,\FF)$, \autoref{lem_localization} yields $\delta \in \DiffR(R,R/\pp)$ and 
$r \in R \backslash \pp$ such that $\xi = \frac{\mathfrak{L}_\pp(\delta)}{r}$.
So, we localize at $\pp$ and we  consider operators in $\Diff_{R_\pp/\KK}(R_\pp,\FF)$.
These can be lifted.
\autoref{lem_extend_field} gives a field extension $\KK \subset \LL \subset R_\pp$ such that 
 $\LL \hookrightarrow \FF$ is separable and algebraic.
 By \cite[Lemma 2.7~(ii)]{NOETH_OPS}, we have $\Diff_{R_\pp/\LL}(R_\pp,\FF) \subset  \Diff_{R_\pp/\KK}(R_\pp,\FF)$.
We now set $\KK = \LL$ and this makes \autoref{prop_approx} applicable.
Setting $J = (I:_R\pp^\infty)$, we have the primary decompositions 
$$
	I_\pp \,\,= \bigcap_{1 \le \ell \le i \atop \pp_\ell \subseteq \pp		
	} \!\! Q_{\ell} R_\pp \quad \text{ and } \quad 
	J_\pp \,\,=\!\! \bigcap_{1 \le \ell \le i-1 \atop\pp_\ell \subseteq \pp}\!\! Q_{\ell} R_\pp,
$$	
where $J_\pp = R_\pp$ if $\pp$ is a minimal prime of $I$.
	We now divide the proof into three shorter steps.
	
	{\sc Step 1.}
Let $\aaa_m = \gamma_\pp(I_\pp) + \MM^m$  and  $\bbb_m = \gamma_\pp(J_\pp) + \MM^m$
as in  \autoref{prop_approx}~(ii).
Following \autoref{lem_diff_ops_correspon_to_ideals}~(iii), let $\mathcal{E}_m $
be the $(R_\pp \otimes_\KK R_\pp)$-submodule determined by the inclusion
	$$
	\mathcal{E}_m \cong \Hom_{\FF}\left(
	\frac{\FF \otimes_\KK R_\pp}{\aaa_m}, \FF \right) \,\hookrightarrow \,
	 \Hom_{\FF}\left(\frac{\FF \otimes_\KK R_\pp}{\MM^{m}}, \FF\right) \cong 
	\Diff_{R_\pp/\KK}^{m-1}(R_\pp,\FF). 
	$$ 
 	Since $\, I_\pp   =  \bigcap_{m=1}^\infty \Sol(\mathcal{E}_m)$, by
   \autoref{prop_approx}~(iii), we  restrict ourselves to studying the $\mathcal{E}_m$'s.
 	
 	{\sc Step 2.}
	The idea is to ``delete or not take into account'' the differential conditions 
	for describing $J_\pp$ that are available by induction.
 	 	Let $m \ge 0$. We have the short exact sequence 
 	$$
 	0 \rightarrow \frac{\bbb_m}{\aaa_m} \rightarrow  \frac{\FF \otimes_\KK R_\pp}{\aaa_m}  \rightarrow \frac{\FF \otimes_\KK R_\pp}{\bbb_m} \rightarrow 0.
 	$$
	By dualizing with the functor $\Hom_{\FF}(-,\FF)$, we obtain the short exact sequence 
 	\begin{equation}
 		\label{eq_short_exact}
 0 \,\rightarrow \,\Hom_{\FF}\left(\frac{\FF \otimes_\KK R_\pp}{\bbb_m}, \FF\right)\, 
 \rightarrow \, \mathcal{E}_m \, \rightarrow \,\Hom_{\FF}\left(\frac{\bbb_m}{\aaa_m},\FF\right) \,\rightarrow \,0.
 	\end{equation}
 	Notice that $\Hom_{\FF}\left(\frac{\FF \otimes_\KK R_\pp}{\bbb_m}, \FF\right)$ is isomorphic to an $(R_\pp \otimes_\KK R_\pp)$-submodule $\mathcal{H}_m \subset \Diff_{R_\pp/\KK}^{m-1}(R_\pp,\FF) $ such that $\Sol(\mathcal{H}_m) = J_\pp + \pp^mR_\pp$. Again, this follows from \autoref{lem_diff_ops_correspon_to_ideals} and \autoref{prop_ass_gr_rings}.

 	By \autoref{eq_short_exact}, we can find an $\FF$-basis 
	 $B_m = \{\omega_1^{(m)},\ldots,\omega_{r_m}^{(m)}, \xi_1^{(m)},\ldots,\xi_{s_m}^{(m)}\}$
	 of the $\FF$-vector space $\mathcal{E}_m$ such that $\{w_1^{(m)},\ldots,\omega_{r_m}^{(m)}\}$  is an $\FF$-basis of the subspace $\mathcal{H}_m \subset \mathcal{E}_m$ and 
 	$s_m = \dim_{\FF}\left(\bbb_m/\aaa_m\right)$.
 	
 	 {\sc Step 3.} Although the problem now seems to be of an infinite nature, it is not:
	 we can bound the order $m$.
 	 By \autoref{prop_approx}~(i),(ii), there exists $m_0 \ge 1$ such that
	 $ J_\pp/I_\pp  \, \xrightarrow{\cong} \, \bbb_m/\aaa_m$
 	 is an isomorphism for $m \ge m_0$.
	Hence, for $m \ge m_0$, we obtain the following commutative diagram:
	\begin{center}
		\begin{tikzpicture}[baseline=(current  bounding  box.center)]
			\matrix (m) [matrix of math nodes,row sep=2.5em,column sep=4em,minimum width=2em, text height=2ex, text depth=0.25ex]
			{
				0 & \mathcal{H}_m & \mathcal{E}_m & \Hom_{\FF}\left(\frac{\bbb_m}{\aaa_m},\FF\right) & 0\\
				0 & \mathcal{H}_{m+1} & \mathcal{E}_{m+1} & \Hom_{\FF}\left(\frac{\bbb_{m+1}}{\aaa_{m+1}},\FF\right) & 0\\
			};
			\path[-stealth]
			(m-1-1) edge (m-1-2)
			(m-1-2) edge (m-1-3)
			(m-1-3) edge (m-1-4)
			(m-1-4) edge (m-1-5)
			(m-2-1) edge (m-2-2)
			(m-2-2) edge (m-2-3)
			(m-2-3) edge (m-2-4)
			(m-2-4) edge (m-2-5)
			(m-1-4) edge node [right] {$\cong$} (m-2-4)
			;		
			\draw[right hook->] (m-1-2)--(m-2-2);					
			\draw[right hook->] (m-1-3)--(m-2-3);
		\end{tikzpicture}	
	\end{center}
The rows are exact, the two left vertical maps are inclusions, and the right one is an isomorphism.
 We choose the $\FF$-bases $B_m$ and $B_{m+1}$ above in such a way that $\{\xi_1^{(m)},\ldots,
 \xi_{s_m}^{(m)}\} = \{\xi_1^{(m+1)}\! \!,\ldots,\xi_{s_{m+1}}^{(m+1)}\}$. This set stabilizes and 
 we denote it by $\{\xi_1, \ldots,\xi_s\} \subset \mathcal{E}_{m_0}$. Its cardinality~is
$$ s \,=\, \dim_{\FF}\left(\bbb_{m_0}/\aaa_{m_0}\right) \,=\,  \leng_{R_\pp}\left(J_\pp/I_\pp\right) \,=\, {\rm mult}_I(\pp). $$
	
Using Krull's Intersection Theorem and the induction hypothesis \hyperref[induct_hyp_b]{(b)}, we find
 	 \begin{equation}
 	 	\label{eq_J_pp_as_sols}
 	 	\bigcap_{m=1}^{\infty} \! \Sol(\mathcal{H}_m) \,=\, \bigcap_{m=1}^{\infty} \!(J_\pp + \pp^mR_\pp) \,=\, \,J_\pp\,
		 \; = \bigcap_{1 \le \ell \le i-1 \atop\pp_\ell \subseteq \pp} \!\!
 	 	\big\lbrace f \in R_\pp \mid \mathfrak{L}_\pp(\delta)(f) = 0 \text{ for all } \delta \in \AAA_\ell \big\rbrace.		
 	 \end{equation}
	 Setting $\AAA'  = \{\xi_1, \ldots,\xi_s\}$ and using
	  the way the $\FF$-bases $B_m$ were chosen, we conclude
 	 \begin{align}
 	 	\label{eq_large_sol}
 	 	\begin{split} 	 	
 	 	I_\pp \,\,&= \bigcap_{m=m_0}^\infty \!\!\Sol(\mathcal{E}_m) \,\,
 = \bigcap_{m=m_0}^\infty \!\!\Sol(B_m) \,\,= \,\Big(\bigcap_{m=m_0}^\infty \Sol(\mathcal{H}_m)\Big) \,\,\bigcap\,\, \Sol(\AAA')\\
 &= \bigcap_{1 \le \ell \le i-1 \atop \pp_\ell \subseteq \pp } \!\!
 \big\lbrace f \in R_\pp \mid \mathfrak{L}_\pp(\delta)(f) = 0 \text{ for all } \delta \in \AAA_\ell \big\rbrace \,\,\bigcap\,\, \Sol(\AAA').
 	 \end{split}
 	 \end{align}
 So, we obtained the identity that proves part (i).

\medskip
  	
 (ii)  Consider any differential primary decomposition	 $(\pp_1, \AAA_1), \ldots, (\pp_k, \AAA_k)$ for the given ideal~$I$.
  	Fix $1 \le i \le k$, and set $\pp = \pp_i$, $\FF = k(\pp)$, $\AAA = \AAA_i$ and $J = (I :_R \pp^\infty)$.
  	By assumption, we have
  	\begin{align}
  		\label{eq_intersect_lower_bound}
		\begin{split}
I_\pp  \,&\,\,=\,\,\, J_\pp \; \, \bigcap \; \,	\big\lbrace f \in R_\pp \mid \mathfrak{L}_\pp(\delta)(f) = 0 \text{ for all } \delta \in \AAA \big\rbrace.
		\end{split}
  	\end{align}
The map $\iota : J_\pp \hookrightarrow R_\pp$ induces a canonical map 
 $  \tau :
 \Diff_{R_\pp/\KK}(R_\pp, \FF) \,\rightarrow \,\Diff_{R_\pp/\KK}(J_\pp, \FF),  \delta \mapsto \delta \circ \iota.
 $
 Since $I_\pp$ is a $\pp$-primary submodule of $J_\pp$, we
 can assume  $\AAA \subset \Diff_{R_\pp/\KK}^m(R_\pp, \FF)$ and $\pp^{m+1}J_\pp \subseteq I_\pp$.
 Let $\widetilde{\AAA} \subset \Diff_{R_\pp/\KK}^m(J_\pp, \FF)$ be the image of
  $\{ \mathfrak{L}_\pp(\delta) \mid \delta \in \AAA\}$ under $\tau$. It follows from
\autoref{eq_intersect_lower_bound} that
	\begin{equation}
		\label{eq_Ipp_as_as_equat_Jpp}
I_\pp \,\,=\,\, \big\lbrace f \in J_\pp \mid \widetilde{\delta}(f) = 0 \text{ for all } \widetilde{\delta} \in \widetilde{\AAA} \big\rbrace.
	\end{equation}
Applying   $-\otimes_{R_\pp} J_\pp$ to  $R_\pp/\pp^{m+1}R_\pp \rightarrow \left(\FF \otimes_\KK R_\pp\right)/\MM^{m+1}$, we obtain the map
	\begin{equation}
		\label{eq_isom_Jpp_m}
  J_\pp/\pp^{m+1}J_\pp \;\cong\; R_\pp/\pp^{m+1}R_\pp \otimes_{R_\pp} J_\pp \,\;\rightarrow\;\,
   \frac{\FF \otimes_\KK R_\pp}{\MM^{m+1}} \otimes_{R_\pp} J_\pp 
		\;\cong\; \frac{\FF \otimes_\KK J_\pp}{\MM^{m+1}\left(\FF \otimes_\KK J_\pp\right)} 
		=: \mathcal{Q}.
	\end{equation}
The module $\mathcal{Q}$ appears in \autoref{lem_diff_ops_correspon_to_ideals}~(i). 
Let $\mathcal{G} \subset \Hom_{\FF}\left(\mathcal{Q},\FF\right)$ be the $(R_\pp \otimes_\KK R_\pp)$-module generated by $\widetilde{\AAA}$ in
$\Hom_{\FF}\left(\mathcal{Q},\FF\right)$.
Note that $\mathcal{G}$ is a finitely generated module over the Artinian local ring $\left(\FF \otimes_\KK R_\pp\right)/\MM^{m+1}$.
Let  $\mathbb{V} \subset \FF \otimes_\KK J_\pp$ such that 
	\begin{equation}
		\label{eq_functionals_sols}
		\frac{\mathbb{V}}{\MM^{m+1}\left(\FF \otimes_\KK J_\pp\right)} \;\cong \; \big\lbrace w \in \mathcal{Q} \,\mid\, \eta(w) = 0 \;\text{ for all }\; \eta \in \mathcal{G}  \big\rbrace.
	\end{equation}
	Dualizing the inclusion $\mathcal{G} \subset \Hom_{\FF}\left(\mathcal{Q},\FF\right)$, we get the short exact sequence 
	\begin{equation} \label{eq_dualize_short_E}
		0 \,\,\rightarrow \,\,Z \,\,\rightarrow\,\, \mathcal{Q}
		\,\, \rightarrow \,\,\Hom_\FF(\mathcal{G}, \FF)\,\, \rightarrow \,\, 0,
	\end{equation}
	where $Z=\big\lbrace w \in \mathcal{Q}
	\,\mid\,  \eta(w) = 0 \;\text{ for all }\; \eta \in \mathcal{G} \big\rbrace$.
	We get the isomorphism 
$\Hom_\FF(\mathcal{G}, \FF) \cong \frac{\FF \otimes_\KK J_\pp}{\mathbb{V}}$
by combining \autoref{eq_functionals_sols} and \autoref{eq_dualize_short_E}.
By \autoref{eq_Ipp_as_as_equat_Jpp} and \autoref{eq_isom_Jpp_m}, we get an inclusion $J_\pp/I_\pp  \hookrightarrow \frac{\FF \otimes_\KK J_\pp}{\mathbb{V}}$.
This implies	$$
|\AAA| = |\widetilde{\AAA}| \,\ge \,\dim_\FF\left(\Hom_\FF(\mathcal{G}, \FF)\right) 
\,=\, \dim_{\FF}\left(\frac{\FF \otimes_\KK J_\pp}{\mathbb{V}}\right) \,\ge\, \leng_{R_\pp}(J_\pp/I_\pp) \,=\,
{\rm mult}_I(\pp).	$$
This is the desired inequality, which
 completes the proof of \autoref{thm:main}.
\end{proof}

\begin{remark}
After substituting $\KK$ by an intermediate field $\KK \subset \LL \subset R_\pp$ such that $\LL \hookrightarrow \FF$ is separable and algebraic, the minimal differential primary decomposition arose from a 
compatible basis for the $\mathbb{F}$-vector spaces
$\mathcal{H}_m \subset \mathcal{E}_m$ in \autoref{eq_short_exact}. 
 Every such basis gives a set of operators
for $\mathfrak{A}_i$.
In that sense, the minimal differential primary decomposition is unique up to choices of bases. 
\end{remark}

\section{From Ideals to Modules}
\label{sec:modules}

We now turn to differential primary decompositions for modules. 
The methods to be used for modules are the same as for ideals.
We continue using the setup and notation in \autoref{sect_diff_prim_dec_ideals}.
We fix a finitely generated $R$-module $M$. In applications,
this is typically a free module $M = R^p$.

\begin{definition} \label{def_prim_dec_modules}
Let $U \subset M$ be an $R$-submodule with $\Ass(M/U) = \{\pp_1,\ldots,\pp_k\} \subset \Spec(R)$.
 A \emph{differential primary decomposition} of $U$ is a list of pairs $(\pp_1, \AAA_1), \ldots, (\pp_k, \AAA_k)$ such that $\AAA_i \subset \DiffR(M,R/\pp_i)$ is a finite set of differential operators, and 
for each  $\pp \in \Ass(M/U)$ we have
$$ U_\pp  \,\,\,\;= \bigcap_{1 \le i \le k \atop \pp_i \subseteq \pp}  \big\lbrace w \in M_\pp \mid \delta'(w) = 0 \text{ for  all }   \delta \in \AAA_i  \big\rbrace.
$$				
Here	$\delta' \in \Diff_{R_\pp/\KK}(M_\pp,R_\pp/\pp_iR_\pp)$ denotes the image of an
operator $\delta \in \AAA_i$ under  \autoref{lem_localization}~(i).
\end{definition}

We begin with the analogs to \autoref{lem_basic_prop_weak}, \autoref{def_arith_mult} and \autoref{prop_approx}.
	
\begin{lemma}
For a differential primary decomposition as in \autoref{def_prim_dec_modules}, the following holds:	
		\begin{enumerate}[\rm (i)]
\item The submodule is recovered as
 $\,U = \big\lbrace w \in M \mid \delta(w) = 0 \text{ for  all }  \delta \in \AAA_i  \text{ and } 1 \le i \le k  \big\rbrace$.
 \item For each index $i \in \{1,\ldots,k\}$, the set   $\,\AAA_i$ of differential operators is non-empty.
		\end{enumerate}
	\end{lemma}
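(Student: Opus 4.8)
The plan is to reduce this lemma for modules to the corresponding lemma for ideals, namely \autoref{lem_basic_prop_weak}, by mimicking its proof verbatim with $M$ in place of $R$. Both statements are essentially formal consequences of the localization property \autoref{lem_localization}~(ii) together with the theory of primary decomposition of submodules, so no new idea is required; the task is to check that every step still goes through in the module setting.

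For part (i): First I would fix a minimal primary decomposition $U = Q_1 \cap \cdots \cap Q_k$ where each $Q_i$ is a $\pp_i$-primary submodule of $M$. For each $\pp \in \Ass(M/U)$, apply \autoref{lem_localization}~(ii) to the multiplicatively closed set $W = R \setminus \pp$ (note $R/\pp_i$ is $W$-torsion-free since $\pp_i \subseteq \pp$, so the hypothesis of part (ii) is met), obtaining
$$
\bigcap_{i:\, \pp_i \subseteq \pp} Q_i \;=\; U_\pp \cap M \;=\; \bigcap_{i:\, \pp_i \subseteq \pp} \big\lbrace w \in M \mid \delta(w) = 0 \text{ for all } \delta \in \AAA_i \big\rbrace.
$$
The first equality is the standard fact that the contraction of $U_\pp$ to $M$ is $\bigcap_{\pp_i \subseteq \pp} Q_i$ for a minimal primary decomposition (the module analog of \cite[Theorem 4.1]{MATSUMURA}), and the second is exactly the defining equation of a differential primary decomposition combined with \autoref{lem_localization}~(ii). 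Intersecting this identity over all $\pp \in \Ass(M/U)$ yields the claimed description of $U$, since $U = \bigcap_i Q_i$.

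For part (ii): I would argue by contradiction, supposing $\AAA_i = \emptyset$ for some $i$ and setting $\qqq = \pp_i$. Intersecting the displayed identity from part (i) over all $\pp \in \Ass(M/U)$ with $\pp \subsetneq \qqq$, and using $\AAA_i = \emptyset$ so that $\qqq$ itself contributes no conditions, one concludes $\bigcap_{j:\, \pp_j \subsetneq \qqq} Q_j = U_\qqq \cap M$, which forces $\qqq \notin \Ass(M/U)$, contradicting the hypothesis. The one point to verify carefully — and what I expect to be the only real obstacle, though a mild one — is that the ordering/indexing is handled correctly so that when $\qqq = \pp_i$ is removed, every remaining $\pp_j$ with $\pp_j \subseteq \qqq$ actually satisfies $\pp_j \subsetneq \qqq$ (this uses $\pp_j \neq \pp_i$ for $j \neq i$), and that the module analog of \cite[Theorem 4.1]{MATSUMURA} holds in the needed generality; both are routine, so this lemma's proof is essentially identical to that of \autoref{lem_basic_prop_weak}.
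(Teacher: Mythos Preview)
Your proposal is correct and follows exactly the approach the paper takes: the paper's proof simply reads ``Essentially verbatim to the proof of \autoref{lem_basic_prop_weak},'' and what you have written is precisely that verbatim translation to the module setting. The points you flag as needing care (torsion-freeness of $R/\pp_i$ for \autoref{lem_localization}~(ii), the module analog of \cite[Theorem~4.1]{MATSUMURA}, and the strictness $\pp_j \subsetneq \qqq$ for $j \neq i$) are indeed the only details to check and are routine, as you note.
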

	\begin{proof}
Essentially verbatim to the proof of \autoref{lem_basic_prop_weak}.
	\end{proof}

\begin{definition}
	For a submodule $U \subset M$, its \emph{arithmetic multiplicity} 
is the positive integer
$$ \amult(U) \,\, :=  \sum_{\pp \in \Ass(M/U)} \!\!\!
\text{length}_{R_\pp}\left(\HH_{\pp}^0\left(M_\pp/U_\pp\right)\right) 
\quad = \sum_{\pp \in \Ass(R/I)} \!\!\! \text{length}_{R_\pp}\left(
\frac{\left(U_\pp:_{M_\pp} {(\pp R_\pp)}^{\infty}\right)}{U_\pp} \right).
$$
\end{definition}

\begin{example} 
Primary decompositions for submodules have been studied in
computer algebra (cf.~\cite{Indrees}), but explicit examples are rare,
even over a polynomial ring.
 Working with their differential operators is unfamiliar, and the development of
 numerical algorithms is highly desirable. 
Let us consider $R = \mathbb{Q}[x,y,z]$,
$M = R^2$ and 
$ U = {\rm image}_R \! \begin{small} \begin{bmatrix} x^2 & xy & xz \\
y^2 & yz & z^2 \end{bmatrix}\end{small} = U_1 \cap U_2 \cap U_3$, where
$$ U_1 \,=\, {\rm image}_R  \begin{bmatrix} 0 & x \\ 1 & 0 \end{bmatrix} ,\quad
U_2 \,=\, {\rm image}_R   \begin{bmatrix} x & y^2 & 0 \\ z & z^2 & xz- y^2 \end{bmatrix}, \quad
U_3 \,=\,  {\rm image}_R   \begin{bmatrix} 
	1 & 0 & 0 & 0\\ 
	0 & y^2 & yz  & z^2
\end{bmatrix}.
$$
The modules $U_i$ are primary with associated primes
$\pp_1 = \langle x \rangle $,
$\pp_2 = \langle xz - y^2\rangle $,
$\pp_3 = \langle y,z \rangle$.
Each prime has multiplicity one in $U$, so  ${\rm amult}(U) = 3$.
A  minimal differential primary decomposition  is given by
$U = \{w \in R^2 \mid \,\delta_i (w) \in \pp_i \,\,{\rm for}\,\, i=1,2,3 \}$ where
$\delta_1 = (1,0)$, $\delta_2 = (z,-x) $, $\delta_3 = (0,\partial_z) $. 
We verified this example with the homological methods described in 
\cite[Section 1]{EHV}.
\end{example}

The next proposition is our approximation result for the case of modules.

\begin{proposition} \label{prop_approx_mod}
	Fix an $R$-submodule $U \subset M$ and an associated prime $\pp \in \Ass(M/U)$.
	Let $V = (U :_M \pp^\infty)$ and $\FF = k(\pp)$ be the residue field of $\pp$.
	Assume that $\KK \hookrightarrow \FF$ is a separable algebraic extension.
	Then the following statements hold: 
\begin{enumerate}[\rm (i)]
\item There exists a positive integer $m_0$ such that, for all $m \geq m_0$, we	 have the isomorphism 
$$
\frac{V_\pp}{U_\pp} \; \, \xrightarrow{\cong} \; \, \frac{V_\pp + \pp^mM_\pp}{U_\pp + \pp^mM_\pp}.
$$
\item 
Let $\gamma_{\pp,M}  : M_\pp  \rightarrow  \FF \otimes_\KK M_\pp$ be the map induced by $\gamma_\pp$
in  \autoref{prop_approx} (ii), and define the $(R_\pp \otimes_\KK R_\pp)$-modules
$$
\aaa_m = \gamma_{\pp,M}(U_\pp) + \MM^m\left(\FF \otimes_\KK M_\pp\right) \quad \text{ and  } \quad \bbb_m = \gamma_{\pp,M}(V_\pp) + \MM^m\left(\FF \otimes_\KK M_\pp\right). $$
Then, if we choose $m \ge m_0$  as in part (i), we obtain the following isomorphism
$$ V_\pp/U_\pp  \; \xrightarrow{\cong} \; \bbb_m/\aaa_m. $$
\item Let $\mathcal{E}_m$ be the  $(R_\pp \otimes_\KK R_\pp)$-submodule of $\Diff_{R_\pp/\KK}^{m-1}\left(M_\pp,\FF\right)$  determined by  the $\MM$-primary submodule
		$\,\aaa_m $ as in \autoref{lem_diff_ops_correspon_to_ideals}~(iii).
		 The localized module $U_\pp$ is recovered as~follows:
		$$
		U_\pp  \,\, = \, \bigcap_{m=1}^\infty \Sol(\mathcal{E}_m).
		$$		
	\end{enumerate}
\end{proposition}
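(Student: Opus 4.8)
The plan is to adapt the proof of \autoref{prop_approx} essentially verbatim, replacing ideals by submodules throughout, since all the tools invoked there (\autoref{lem_diff_ops_correspon_to_ideals}, \autoref{prop_ass_gr_rings}, Krull's Intersection Theorem) are already stated for finitely generated modules. So the first thing I would do is fix the notation: localize everything at $\pp$, write $M_\pp$, $U_\pp$, $V_\pp$, and note that $V_\pp = (U_\pp :_{M_\pp} (\pp R_\pp)^\infty)$ since saturation commutes with localization.

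For part (i), I would run the same diagram chase as in \autoref{prop_approx}~(i): the canonical surjection $V_\pp \twoheadrightarrow \frac{V_\pp + \pp^m M_\pp}{U_\pp + \pp^m M_\pp}$ has kernel $V_\pp \cap (U_\pp + \pp^m M_\pp)$, and since $V_\pp \supseteq U_\pp$ one rewrites this as $U_\pp + (V_\pp \cap \pp^m M_\pp)$. The containment $V_\pp = (U_\pp :_{M_\pp} (\pp R_\pp)^\infty)$ forces $V_\pp \cap \pp^m M_\pp \subseteq U_\pp$ for $m \gg 0$ (the submodule $V_\pp/U_\pp$ is annihilated by a power of $\pp R_\pp$ and $M_\pp$ is Noetherian, so the Artin--Rees lemma applied to $\pp^m M_\pp \cap V_\pp$ does the job), and this is exactly the required isomorphism with $m_0$ the threshold. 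For part (ii), I would invoke \autoref{prop_ass_gr_rings} tensored with $M_\pp$: applying $- \otimes_{R_\pp} M_\pp$ to the isomorphism $R_\pp/\pp^m R_\pp \xrightarrow{\cong} (\FF \otimes_\KK R_\pp)/\MM^m$ gives isomorphisms $M_\pp/(U_\pp + \pp^m M_\pp) \xrightarrow{\cong} (\FF \otimes_\KK M_\pp)/\aaa_m$ and likewise with $V_\pp$ and $\bbb_m$; combining these with part (i) yields $V_\pp/U_\pp \xrightarrow{\cong} \bbb_m/\aaa_m$. For part (iii), \autoref{lem_diff_ops_correspon_to_ideals}~(iv) identifies $\Sol(\mathcal{E}_m)$ with the contraction of the $\MM$-primary submodule corresponding to $\aaa_m$, which under \autoref{prop_ass_gr_rings} is $U_\pp + \pp^m M_\pp$; intersecting over all $m$ and applying Krull's Intersection Theorem to the finitely generated module $M_\pp/U_\pp$ over the Noetherian local ring $R_\pp$ gives $\bigcap_m (U_\pp + \pp^m M_\pp) = U_\pp$.

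The only genuine subtlety, and the one step I would be careful about, is the appeal to Artin--Rees (equivalently, the stabilization $V_\pp \cap \pp^m M_\pp \subseteq U_\pp$) in part (i): in the ideal case of \autoref{prop_approx} this was phrased as "$J_\pp = (I_\pp :_{R_\pp}(\pp R_\pp)^\infty)$ implies $J_\pp \cap \pp^m R_\pp \subseteq I_\pp$ for $m \gg 0$'', and the module version needs the inclusion-preserving identity $V_\pp \cap (U_\pp + \pp^m M_\pp) = U_\pp + (V_\pp \cap \pp^m M_\pp)$, which holds precisely because $V_\pp \supseteq U_\pp$ (modular law). Everything else is a transcription. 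I would therefore write the proof as "The argument is identical to that of \autoref{prop_approx}, replacing $R_\pp$ by $M_\pp$, $I_\pp$ by $U_\pp$ and $J_\pp$ by $V_\pp$ throughout; the only point requiring comment is \dots'' and then spell out the modular-law identity and the Artin--Rees application, followed by the tensor-product step using \autoref{prop_ass_gr_rings}.
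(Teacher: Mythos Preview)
Your proposal is correct and matches the paper's own proof essentially line for line: the paper says parts (i) and (iii) are obtained identically to \autoref{prop_approx}, and for part (ii) it tensors the isomorphism of \autoref{prop_ass_gr_rings} with $M_\pp$ exactly as you describe. Your explicit mention of the modular law and Artin--Rees simply spells out what the paper leaves implicit in the phrase ``obtained identically''.
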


\begin{proof}
Part (i) is obtained identically to \autoref{prop_approx}~(i), and similarly for part (iii).
For part (ii) we use 	\autoref{prop_ass_gr_rings}.
Taking the tensor product $- \otimes_{R_\pp} M_\pp$, we obtain the isomorphism
 $$
	\frac{M_\pp}{\pp^{m}M_\pp} \;\;\cong\;\; \frac{R_\pp}{\pp^{m}R_\pp} \otimes_{R_\pp} M_\pp \;\;\xrightarrow{\cong}\;\; \frac{\FF \otimes_\KK R_\pp}{\MM^m} \otimes_{R_\pp} M_\pp \;\;\cong\;\; \frac{\FF \otimes_\KK M_\pp}{\MM^{m}\left(\FF \otimes_\KK M_\pp\right)}
\qquad \text{for all $m \ge 1$.} $$
 From this we get the isomorphisms:
 	$$
 	M_\pp/\left(U_\pp + \pp^mM_\pp\right) \xrightarrow{\cong} \left(\FF \otimes_\KK M_\pp\right)/\aaa_m \;\;\text{ and }\;\; M_\pp/\left(V_\pp + \pp^mM_\pp\right) \xrightarrow{\cong} \left(\FF \otimes_\KK M_\pp\right)/\bbb_m.
 	$$
 	So, the proof is analogous to that of \autoref{prop_approx}~(ii).
 	\end{proof}

The next theorem is an extension of our main result (\autoref{thm:main}) to the case of modules.

\begin{theorem}
	\label{thm_modules}
	Assume \autoref{setup_general} with $\KK$ perfect.
For any submodule $U \subset M$ with $\Ass(M/U) = \{\pp_1,\ldots,\pp_k\} \subset \Spec(R)$, we have:
	\begin{enumerate}[\rm (i)]
		\item $U$ has a differential primary decomposition $(\pp_1, \AAA_1)$, $\ldots$, $(\pp_k, \AAA_k)$ such that 
$$ |\AAA_i | \, = \, \leng_{R_{\pp_i}}\left(\HH_{\pp_i}^0\left(M_{\pp_i}/U_{\pp_i}\right)\right). $$			
\item If $(\pp_1, \AAA_1),\ldots, (\pp_k, \AAA_k)$ is any differential primary decomposition for $U$, then $$ |\AAA_i | \, \ge  \, \leng_{R_{\pp_i}}\left(\HH_{\pp_i}^0\left(M_{\pp_i}/U_{\pp_i}\right)\right). $$
Thus,  the size of a differential primary decomposition is at least	$\,\amult(U)$.
\end{enumerate}		
\end{theorem}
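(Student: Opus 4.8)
The plan is to mirror, for modules, the proof of \autoref{thm:main}, since all the infrastructure (\autoref{lem_localization}, \autoref{lem_diff_ops_correspon_to_ideals}, \autoref{prop_ass_gr_rings}, and now \autoref{prop_approx_mod}) has already been set up in the module-theoretic form. For part (i) I would argue by induction on the associated primes $\pp_1,\ldots,\pp_k$, ordered so that $\pp_j \subsetneq \pp_i$ implies $j<i$. At the $i$-th step, set $\pp=\pp_i$, $\FF=k(\pp)$, and $V=(U:_M\pp^\infty)$; by \autoref{lem_extend_field} replace $\KK$ by an intermediate field $\LL$ with $\LL\hookrightarrow\FF$ separable algebraic, so that \autoref{prop_approx_mod} applies. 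Using \autoref{prop_approx_mod}~(iii) one has $U_\pp=\bigcap_{m\ge1}\Sol(\mathcal{E}_m)$, where $\mathcal{E}_m\subset\Diff_{R_\pp/\KK}^{m-1}(M_\pp,\FF)$ corresponds to the $\MM$-primary submodule $\aaa_m=\gamma_{\pp,M}(U_\pp)+\MM^m(\FF\otimes_\KK M_\pp)$. Dualizing the short exact sequence $0\to\bbb_m/\aaa_m\to(\FF\otimes_\KK M_\pp)/\aaa_m\to(\FF\otimes_\KK M_\pp)/\bbb_m\to0$ gives
\begin{equation*}
0\to\mathcal{H}_m\to\mathcal{E}_m\to\Hom_\FF(\bbb_m/\aaa_m,\FF)\to0,
\end{equation*}
where $\mathcal{H}_m$ has $\Sol(\mathcal{H}_m)=V_\pp+\pp^mM_\pp$. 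By \autoref{prop_approx_mod}~(i),(ii) the quotient $\bbb_m/\aaa_m$ stabilizes to $V_\pp/U_\pp$ for $m\ge m_0$, so one can choose compatible $\FF$-bases $B_m\supset B_{m+1}\cap(\text{new part})$ whose ``new'' elements stabilize to a fixed set $\{\xi_1,\ldots,\xi_s\}\subset\mathcal{E}_{m_0}$ with $s=\dim_\FF(\bbb_{m_0}/\aaa_{m_0})=\leng_{R_\pp}(V_\pp/U_\pp)=\leng_{R_{\pp}}(\HH^0_{\pp}(M_\pp/U_\pp))$. Pulling $\{\xi_1,\ldots,\xi_s\}$ back to $\AAA_i\subset\DiffR(M,R/\pp_i)$ (via \autoref{lem_localization}~(i)) and invoking the induction hypothesis for $V_\pp=\bigcap_{m}\Sol(\mathcal{H}_m)$ yields the required identity $U_\pp=\bigcap_{\pp_\ell\subseteq\pp}\{w\in M_\pp\mid\mathfrak{L}_\pp(\delta)(w)=0\ \forall\,\delta\in\AAA_\ell\}$ with $|\AAA_i|=\leng_{R_{\pp_i}}(\HH^0_{\pp_i}(M_{\pp_i}/U_{\pp_i}))$.

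For part (ii), given any differential primary decomposition $(\pp_1,\AAA_1),\ldots,(\pp_k,\AAA_k)$, fix $i$, set $\pp=\pp_i$, $\AAA=\AAA_i$, $V=(U:_M\pp^\infty)$. The defining equation at $\pp$ reads $U_\pp=V_\pp\cap\{w\in M_\pp\mid\mathfrak{L}_\pp(\delta)(w)=0\ \forall\,\delta\in\AAA\}$. Composing with the inclusion $\iota:V_\pp\hookrightarrow M_\pp$ gives $\widetilde{\AAA}\subset\Diff_{R_\pp/\KK}^m(V_\pp,\FF)$ with $U_\pp=\{w\in V_\pp\mid\widetilde\delta(w)=0\ \forall\,\widetilde\delta\in\widetilde\AAA\}$, where $m$ is large enough that $\AAA\subset\Diff^m$ and $\pp^{m+1}V_\pp\subseteq U_\pp$ (possible since $U_\pp$ is $\pp$-primary in $V_\pp$). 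Tensoring $R_\pp/\pp^{m+1}R_\pp\to(\FF\otimes_\KK R_\pp)/\MM^{m+1}$ with $V_\pp$ produces a map $V_\pp/\pp^{m+1}V_\pp\to\mathcal{Q}:=(\FF\otimes_\KK V_\pp)/\MM^{m+1}(\FF\otimes_\KK V_\pp)$, and letting $\mathcal{G}\subset\Hom_\FF(\mathcal{Q},\FF)$ be the $(R_\pp\otimes_\KK R_\pp)$-submodule generated by $\widetilde\AAA$, dualizing $\mathcal{G}\subset\Hom_\FF(\mathcal{Q},\FF)$ gives $0\to Z\to\mathcal{Q}\to\Hom_\FF(\mathcal{G},\FF)\to0$ with $\Hom_\FF(\mathcal{G},\FF)\cong(\FF\otimes_\KK V_\pp)/\mathbb{V}$ for the appropriate $\mathbb{V}$. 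One then has an inclusion $V_\pp/U_\pp\hookrightarrow(\FF\otimes_\KK V_\pp)/\mathbb{V}$, whence
\begin{equation*}
|\AAA_i|=|\widetilde\AAA|\ge\dim_\FF\Hom_\FF(\mathcal{G},\FF)=\dim_\FF\!\left((\FF\otimes_\KK V_\pp)/\mathbb{V}\right)\ge\leng_{R_\pp}(V_\pp/U_\pp)=\leng_{R_{\pp_i}}\!\left(\HH^0_{\pp_i}(M_{\pp_i}/U_{\pp_i})\right).
\end{equation*}
Summing over $i$ gives the bound on the total size.

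The proof is almost entirely formal once \autoref{prop_approx_mod} is in hand; the only points requiring care are the same ones as in \autoref{thm:main}. The first is the compatibility of the stabilized bases $\{\xi_1,\ldots,\xi_s\}$ across all $m\ge m_0$ and simultaneously with the inductively-produced operators $\AAA_1,\ldots,\AAA_{i-1}$ for $V_\pp$ — this is what forces the decomposition to be \emph{minimal} rather than merely finite, and the argument hinges on the right-hand vertical arrow in the ladder of short exact sequences being an isomorphism for $m\ge m_0$. The second, more technical, obstacle is checking that all the constructions from the ideal case survive the passage $M\rightsquigarrow V=(U:_M\pp^\infty)$, since $V$ is now a submodule rather than the ambient free module: one needs that $\Diff_{R_\pp/\KK}^m(V_\pp,\FF)$ is still finite-dimensional over $\FF$ and that \autoref{lem_diff_ops_correspon_to_ideals}~(i) applies with $M_\pp$ replaced by $V_\pp$, which it does because $V$ is finitely generated. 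With these two points handled, the rest is verbatim transcription of the proof of \autoref{thm:main}, and I would simply say so rather than repeat it in full.
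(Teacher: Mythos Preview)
Your proposal is correct and follows essentially the same route as the paper: the paper's own proof of \autoref{thm_modules} is deliberately terse, saying only that one fixes a primary decomposition $U=N_1\cap\cdots\cap N_k$, orders the primes so that $\pp_j\subsetneq\pp_i$ implies $j<i$, and then reruns the induction of \autoref{thm:main}~(i) with \autoref{prop_approx_mod} in place of \autoref{prop_approx}, and likewise mirrors \autoref{thm:main}~(ii) for the lower bound. You have unpacked exactly these steps, including the dualized short exact sequence, the stabilization of the bases for $m\ge m_0$, and the passage from $J_\pp$ to $V_\pp=(U:_M\pp^\infty)_\pp$ in part~(ii); your closing remarks about the two delicate points (basis compatibility and finite generation of $V$) are apt and match what the paper implicitly relies on.
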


\begin{proof}
Fix a primary decomposition $U = N_1 \cap \cdots \cap N_k$ where $N_i \subset M$ is a $\pp_i$-primary 
submodule of $M$. Without any loss of generality, we order the primary submodules $N_1,\ldots,N_k$ 
in such a way that $\pp_j \subsetneq \pp_i$ implies $j < i$. For the proof of (i) 
we proceed as in \autoref{thm:main}~(i). Namely, we
	 use induction on $i=1,2,\ldots,k$ to derive the 
	 representation by differential operators:
	$$
	\bigcap_{
		1 \le \ell \le i \atop
		\pp_\ell \subseteq \pp_i	}  \!\!
	\left(N_{\ell}\right)_{\pp_i} \; = \;  
	\bigcap_{1 \le \ell \le i \atop \pp_\ell \subseteq \pp_i} \!\!\!
	\big\lbrace w \in M_{\pp_i} \mid \mathfrak{L}_{\pp_i}(\delta)(w) = 0 \text{ for all } \delta \in \AAA_\ell \big\rbrace.
	$$
Here $\AAA_i \subset \DiffR(M,R/{\pp_i})$
	is carefully constructed to satisfy $|\AAA_i |  =  \leng_{R_\pp}\left(\HH_{\pp}^0\left(M_\pp/U_\pp\right)\right)$.
	The main change is that we now use \autoref{prop_approx_mod} instead of \autoref{prop_approx}.

The proof of the lower bound in part (ii)	
	mirrors that of \autoref{thm:main}~(ii).
\end{proof}

In  \autoref{thm:main} and \autoref{thm_modules} we used differential
operators that take values in $R/\pp_i$ rather than in $R$.  This was necessary
for the existence of a differential primary decomposition. Indeed, the 
representation \autoref{eq:diffprimdec} is generally not available
for differential operators that map into~$R$.

One way to remedy this is to restrict the class of rings $R$. To this end,
we now assume that the $\KK$-algebra $R$ is {\em formally smooth} over 
the ground field $\KK$, and we work in a free module $M = R^p$.
For the definition and basic properties of
formally smooth algebras we refer to
 \cite[\href{https://stacks.math.columbia.edu/tag/00TH}{Tag 00TH}]{stacks-project}.

We are interested in a notion of differential primary decomposition that utilizes
operators in $\DiffR(R,R)$ and $\DiffR(M,R)$ respectively. Consider an ideal $I \subset R$
or a submodule $U \subset M$. The representation 
in  \autoref{def_prim_dec} is called a \emph{strong differential primary decomposition} of $I$
if we can choose $\AAA_i \subset \DiffR(R,R)$ for $i=1,\ldots,k$.
The representation 
in \autoref{def_prim_dec_modules} is called a \emph{strong differential primary decomposition} of $M$
if $\AAA_i \subset \DiffR(M,R)$ for $i=1,\ldots,k$.

The following important result follows as a corollary from our previous developments.

\begin{corollary} \label{cor:formallysmooth}
Assume \autoref{setup_general} with $\KK$ perfect.
Let $R$ be formally smooth over $\KK$ and $M = R^p$ a free module of finite rank.
For any submodule $U \subset M$ with $\Ass(M/U) = \{\pp_1,\ldots,\pp_k\} \subset \Spec(R)$,
we have:
	\begin{enumerate}[\rm (i)]
		\item $U$ has a strong differential primary decomposition $(\pp_1, \AAA_1)$, $\ldots$, $(\pp_k, \AAA_k)$ such that 
		$$
		|\AAA_i | \, = \, \leng_{R_{\pp_i}}\left(\HH_{\pp_i}^0\left(M_{\pp_i}/U_{\pp_i}\right)\right).
		$$			
		\item If $(\pp_1, \AAA_1)$, $\ldots$, $(\pp_k, \AAA_k)$ is a strong differential primary decomposition for $U$, then $$
		|\AAA_i | \, \ge  \, \leng_{R_{\pp_i}}\left(\HH_{\pp_i}^0\left(M_{\pp_i}/U_{\pp_i}\right)\right).
		$$
		Thus,  the size of a strong differential primary decomposition is at least
		$
		\amult(U).
		$
	\end{enumerate}		
\end{corollary}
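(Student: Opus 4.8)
The plan is to deduce \autoref{cor:formallysmooth} from \autoref{thm_modules} by showing that, under the formal smoothness hypothesis, every differential primary decomposition can be upgraded to a strong one without changing the sizes of the sets $\AAA_i$, and conversely every strong decomposition is in particular a decomposition in the sense of \autoref{def_prim_dec_modules}. The second direction is immediate: since $\DiffR(M,R)$ maps into $\DiffR(M,R/\pp_i)$ via composition with the quotient map $R \to R/\pp_i$, a strong differential primary decomposition is automatically a differential primary decomposition, so part (ii) of the corollary follows verbatim from part (ii) of \autoref{thm_modules}. The content is therefore entirely in part (i): producing operators valued in $R$ rather than in $R/\pp_i$.

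For part (i), I would start from the decomposition $(\pp_1,\AAA_1),\ldots,(\pp_k,\AAA_k)$ furnished by \autoref{thm_modules}~(i), so that $\AAA_i \subset \DiffR(M,R/\pp_i)$ with $|\AAA_i| = \leng_{R_{\pp_i}}(\HH^0_{\pp_i}(M_{\pp_i}/U_{\pp_i}))$. The key tool is \autoref{lem_lifting}: since $R$ is formally smooth over $\KK$ and $M = F = R^p$ is free of finite rank, the canonical map $\Diff_{R/\KK}^m(M,R) \to \Diff_{R/\KK}^m(M,R/\pp_i)$ is surjective for every $m \ge 0$ and every $i$. Each $\delta \in \AAA_i$ has some finite order $m$, hence admits a lift $\widehat{\delta} \in \DiffR(M,R)$ with $\widehat{\delta}$ reducing to $\delta$ modulo $\pp_i$. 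Replacing each $\AAA_i$ by the set $\widehat{\AAA}_i$ of chosen lifts, we obtain finite subsets of $\DiffR(M,R)$ with $|\widehat{\AAA}_i| = |\AAA_i|$, and it remains to check that $(\pp_1,\widehat{\AAA}_1),\ldots,(\pp_k,\widehat{\AAA}_k)$ is still a differential primary decomposition, i.e.~that the defining equation of \autoref{def_prim_dec_modules} still holds at every $\pp \in \Ass(M/U)$.

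The verification is the routine part: for a prime $\pp$ and an index $i$ with $\pp_i \subseteq \pp$, localizing at $\pp$ and composing with the quotient $R_\pp \to R_\pp/\pp_i R_\pp$ turns $\mathfrak{L}_\pp(\widehat{\delta})$ into $\mathfrak{L}_\pp(\delta)$ followed by that quotient; since an element $w \in M_\pp$ satisfies $\mathfrak{L}_\pp(\delta)'(w) = 0$ in $R_\pp/\pp_i R_\pp$ if and only if the $R_\pp$-valued operator $\mathfrak{L}_\pp(\widehat{\delta})(w)$ lies in $\pp_i R_\pp$, one has to be slightly careful: the condition "$\widehat{\delta}$ localized applied to $w$ equals zero in $R_\pp$" is \emph{stronger} than "lies in $\pp_i R_\pp$". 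This is precisely why \autoref{def_prim_dec} and \autoref{def_prim_dec_modules} phrase the strong condition as $\delta'(w)=0$ with $\delta' \in \Diff_{R_\pp/\KK}(M_\pp,R_\pp/\pp_i R_\pp)$ being the image under \autoref{lem_localization}~(i) — so the solution condition for the lifted operators is read off in the quotient $R_\pp/\pp_i R_\pp$ anyway, and it coincides with the original condition for $\delta$. Hence the solution sets are unchanged and the identity of \autoref{def_prim_dec_modules} persists. I expect the only genuine subtlety — the main obstacle — to be bookkeeping this correctly: making sure that "strong" means the operators \emph{live in} $\DiffR(M,R)$ while the membership test is still evaluated in $R_\pp/\pp_i R_\pp$, so that \autoref{lem_lifting} can be applied termwise and the sizes are preserved exactly. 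Once that is pinned down, part (i) follows, and combined with the observation above for part (ii), the corollary is proved.
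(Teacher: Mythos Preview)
Your proposal is correct and follows exactly the approach the paper takes: the paper's proof is the single sentence ``This follows directly from \autoref{lem_lifting} and \autoref{thm_modules},'' and you have simply unpacked what that entails, including the correct observation that the strong condition is still tested in $R_\pp/\pp_i R_\pp$ so that lifting via \autoref{lem_lifting} does not alter the solution sets.
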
 

\begin{proof}
	This follows directly from  \autoref{lem_lifting} and \autoref{thm_modules}.
\end{proof}

The next example shows that the hypothesis of $\KK$ being perfect  cannot be avoided.
The minimality result in \autoref{thm:main}~(i) and \autoref{thm_modules}~(i) 
may fail without that assumption.

\begin{example}
Fix a prime $p \in \NN$ and the rational function field  $\KK=\mathbb{F}_p(t)$ over
  $\mathbb{F}_p = \ZZ/p\ZZ$.   Consider the
  maximal ideal   $\pp =  \langle x^p - t \rangle $ in the polynomial ring $R = \KK[x]$.	It is
  well known that $\DiffR(R, R) = \bigoplus_{m=0}^\infty R D_x^m$ where $D_x^m = \partial_x^m/m!\,$ 
  is the differential operator determined by 
	$$
	D_x^m \bigl(x^\beta \bigr) \,=\, \binom{\beta}{m}x^{\beta-m} \;\; \text{ for all }\;\; \beta \ge 0.
	$$ 
	The multiplicity of $ \pp^2$  is $\text{mult}_{\pp^2}(\pp) = 2$.
	However, the minimal number of Noetherian operators required to describe the $\pp$-primary ideal $\pp^2$ is equal to $p+1$.
	An explicit minimal description is 
	$$
	\pp^2 \;=\; \big\lbrace f \in R \mid D_x^m(f) \in \pp \;\text{ for all }\; 0 \le m \le p \big\rbrace.
	$$
The right hand side is a $\pp$-primary ideal  by  \cite[Proposition 3.5]{NOETH_OPS}. It must equal $\pp^2$ since
the only $\pp$-primary ideals in $R$ are powers of $\pp$, and $D_x^m(\pp) \subset \pp$ for $m \leq p-1$.
	See also \cite[Example 5.3]{NOETH_OPS}.
\end{example}

We close with noting
that a differential primary decomposition in a polynomial ring can be pushed forward to a quotient algebra. 
The result below is proved by the methods in \autoref{section_recap}.

\begin{proposition}
	\label{lem_push_forward}
	Let $S=\KK[x_1,\ldots,x_n]$ and $R = S / \mathfrak{K}$ for an ideal $\mathfrak{K} \subset S$.
	Given an ideal $J \subset S$ with $J \supseteq \mathfrak{K}$, it represents an ideal
	 $I = \overline{J} = J / \mathfrak{K} $ in $R$. 
	\begin{enumerate}[\rm (i)]
	\item We have a canonical inclusion $\DiffR^m(R, R/I) \hookrightarrow \Diff_{S/\KK}^{m}(S,S/J)$ for all $m \ge 0$.
	\item If $\delta \in \Diff_{S/\KK}^m(S,S/J)$ and $\Ker(\delta) \supset \mathfrak{K}$, then
		$\delta$ is in the image of the inclusion in part {\rm (i)}.
	\end{enumerate}
\end{proposition}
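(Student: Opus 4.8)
The plan is to exploit the inductive definition of differential operators (Definition \ref{def_diff_ops}) together with the fact that both $R$-module structures in sight — on $R$, on $R/I$, and on the $S$-modules $S/J$ — are compatible via the surjection $\pi : S \twoheadrightarrow R$ with $\ker \pi = \mathfrak{K}$ and the identification $R/I = (S/\mathfrak{K})/(J/\mathfrak{K}) \cong S/J$. For part (i), an $A$-linear map $\delta : R \to R/I$ pulls back along $\pi$ to an $A$-linear map $\tilde\delta : S \to S/J$, namely $\tilde\delta = \delta \circ \pi$ after identifying $R/I$ with $S/J$; this assignment is manifestly injective since $\pi$ is surjective. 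The substance is that it carries $\DiffR^m(R,R/I)$ into $\Diff_{S/\KK}^m(S,S/J)$. First I would check this for $m=0$: an $R$-linear map $R \to R/I$ composed with the $\KK$-algebra map $S \to R$ is $S$-linear, because $S$ acts on both $R$ and $R/I$ through $R$. For the inductive step, I would compute, for $r \in S$ with image $\bar r \in R$, the commutator $[\tilde\delta, r] = [\delta,\bar r]\circ\pi = \widetilde{[\delta,\bar r]}$, using that $\pi$ is an $S$-algebra map so $\pi(rw) = \bar r\,\pi(w)$; then $[\delta,\bar r] \in \DiffR^{m-1}(R,R/I)$ by hypothesis, and by induction its pullback lies in $\Diff_{S/\KK}^{m-1}(S,S/J)$, which is exactly what is needed.

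For part (ii), I would start from $\delta \in \Diff_{S/\KK}^m(S,S/J)$ with $\mathfrak{K} \subseteq \ker\delta$. The condition $\mathfrak{K} \subseteq \ker\delta$ means $\delta$ factors through $S/\mathfrak{K} = R$ as an $A$-linear map $\bar\delta : R \to S/J = R/I$; the point is to show this $\bar\delta$ is an $m$-th order \emph{$R$-linear} differential operator, i.e. lies in $\DiffR^m(R,R/I)$, and that its pullback recovers $\delta$ (the latter being immediate from the construction). Again I would argue by induction on $m$. For $m=0$, $\delta$ is $S$-linear and kills $\mathfrak{K}$, so $\bar\delta$ is $R$-linear. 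For the inductive step, given $\bar r \in R$ lift it to $r \in S$; then $[\bar\delta, \bar r]$ pulls back to $[\delta, r]$, which is an $(m-1)$-st order operator $S \to S/J$ whose kernel still contains $\mathfrak{K}$ (because $[\delta,r](\mathfrak{K}) \subseteq \delta(r\mathfrak{K}) + r\delta(\mathfrak{K}) \subseteq \delta(\mathfrak{K}) + 0 = 0$, using $r\mathfrak{K}\subseteq\mathfrak{K}$), so by the induction hypothesis $[\bar\delta,\bar r] \in \DiffR^{m-1}(R,R/I)$; since $\bar r$ was arbitrary, $\bar\delta \in \DiffR^m(R,R/I)$.

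The main thing to be careful about — and the only place I expect friction — is bookkeeping the three overlapping module structures and making sure every commutator identity is with respect to the correct ring action: the $(S\otimes_\KK S)$-module structure on $\Hom_\KK(S,S/J)$ restricted along $S \to R$ must match the $(R\otimes_\KK R)$-module structure on $\Hom_\KK(R,R/I)$ under the pullback map, and the notational identification $R/I \cong S/J$ should be fixed once and invoked silently thereafter. One small subtlety is that part (i) as stated lands in $\Diff_{S/\KK}^m(S,S/J)$ rather than $\Diff_{S/\KK}^m(S,S/\mathfrak{K} \oplus \dots)$ — but since $R/I$ and $S/J$ are literally the same $\KK$-vector space with the same $S$-action, there is nothing to reconcile. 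Everything else is a routine induction of exactly the flavour already carried out in Remark \ref{rem_incl_power_pp} and Lemma \ref{lem_localization}, so I would keep the write-up short and point to those as templates.
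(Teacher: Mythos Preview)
Your proposal is correct and matches the paper's intent: the paper does not actually write out a proof of this proposition, stating only that it ``is proved by the methods in \autoref{section_recap},'' i.e.\ by the inductive definition of differential operators via commutators. Your argument---pulling back along $\pi:S\twoheadrightarrow R$, checking the base case $m=0$, and verifying $[\tilde\delta,r]=\widetilde{[\delta,\bar r]}$ (respectively $\overline{[\delta,r]}=[\bar\delta,\bar r]$ with $\mathfrak{K}\subseteq\Ker[\delta,r]$) for the inductive step---is exactly the routine induction the paper has in mind.
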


The promised pushforward works as follows.
This is specially important from a practical point of view, since
 \autoref{algo} below is restricted to  polynomial rings $S$ with $\text{char}(\KK)=0$.

\begin{corollary}
	\label{cor_push_forward}
		Let $(\pp_1,\AAA_1),\ldots,(\pp_k,\AAA_k)$ be a differential primary decomposition for $J$
		as in \autoref{lem_push_forward}.
	 Then $\AAA_i \subset \Diff_{S/\KK}(S,S/\pp_i)$ can be identified with a set of differential operators $\widetilde{\AAA_i} \subset \DiffR(R,R/\overline{\pp_i})$, and $(\overline{\pp_1},\widetilde{\AAA_1}),\ldots,(\overline{\pp_k},\widetilde{\AAA_k})$ is a differential primary decomposition for $I$.
\end{corollary}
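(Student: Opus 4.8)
The plan is to transport the defining equations \autoref{eq:defprimdec} of a differential primary decomposition across the surjection $\pi : S \to R = S/\mathfrak{K}$, using \autoref{lem_push_forward} to move the operators themselves and a compatibility-of-localization argument to move the equations. First I would set up the bookkeeping: since $J \supseteq \mathfrak{K}$ and each $\pp_i \in \Ass(S/J)$ contains $J$, we have $\pp_i \supseteq \mathfrak{K}$, so $\overline{\pp_i} = \pp_i/\mathfrak{K}$ is a genuine prime of $R$, and the correspondence $\pp \mapsto \overline{\pp}$ restricts to a bijection between primes of $S$ containing $\mathfrak{K}$ and primes of $R$. Under this bijection $\Ass(S/J) \leftrightarrow \Ass(R/I)$ (both equal the associated primes of the $S$-module $S/J = R/I$), and inclusions are preserved in both directions, so the index set $\{1,\dots,k\}$ and the partial order on it are unchanged. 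I would also record the canonical identifications $R/\overline{\pp_i} \cong S/\pp_i$ and, for each $\pp \in \Ass(R/I)$ with preimage $\mathfrak{P} \subset S$, the localization identity $R_{\overline{\pp}} \cong S_{\mathfrak{P}}/\mathfrak{K}S_{\mathfrak{P}}$, together with $I_{\overline{\pp}} = J_{\mathfrak{P}}/\mathfrak{K}S_{\mathfrak{P}}$ and $\overline{\pp_i}R_{\overline{\pp}} = (\pp_i S_{\mathfrak{P}} + \mathfrak{K}S_{\mathfrak{P}})/\mathfrak{K}S_{\mathfrak{P}}$.

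Next I would produce the operators $\widetilde{\AAA_i}$. Fix $\delta \in \AAA_i \subset \Diff_{S/\KK}^m(S, S/\pp_i)$. By \autoref{lem_basic_prop_weak}~(i) applied to the decomposition of $J$, the common kernel of all the operators in $\bigcup_i \AAA_i$ is exactly $J$, which contains $\mathfrak{K}$; in particular $\Ker(\delta) \supseteq J \supseteq \mathfrak{K}$. Hence \autoref{lem_push_forward}~(ii) applies: $\delta$ lies in the image of the inclusion $\DiffR^m(R, R/\overline{\pp_i}) \hookrightarrow \Diff_{S/\KK}^m(S, S/\pp_i)$ of part (i) (here I use $R/I \cong R/\overline{\pp_i}$ only to the extent needed — more precisely I would phrase part (ii) with target $S/\pp_i$, observing that the cited proof of \autoref{lem_push_forward} works verbatim with $J$ replaced by $\pp_i$ since $\pp_i \supseteq \mathfrak{K}$). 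Call the preimage $\widetilde{\delta} \in \DiffR^m(R, R/\overline{\pp_i})$, and set $\widetilde{\AAA_i} = \{\widetilde{\delta} : \delta \in \AAA_i\}$, a finite set. By construction $\widetilde{\delta}$ and $\delta$ agree on $S$ after the identification $R/\overline{\pp_i} \cong S/\pp_i$, meaning $\widetilde{\delta} \circ \pi = \delta$, and this relation is compatible with localization by \autoref{lem_localization}~(i): localizing at $\pp \in \Ass(R/I)$ (equivalently at $\mathfrak{P} \subset S$) gives $\mathfrak{L}_{\overline{\pp}}(\widetilde{\delta}) \circ \pi_{\mathfrak{P}} = \mathfrak{L}_{\mathfrak{P}}(\delta)$, where $\pi_{\mathfrak{P}} : S_{\mathfrak{P}} \to R_{\overline{\pp}}$ is the localized surjection.

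Then I would verify the defining equation \autoref{eq:defprimdec} for $(\overline{\pp_1},\widetilde{\AAA_1}),\dots,(\overline{\pp_k},\widetilde{\AAA_k})$. Fix $\pp \in \Ass(R/I)$ with preimage $\mathfrak{P}$. Since $\pi_{\mathfrak{P}}$ is surjective, every $g \in R_{\overline{\pp}}$ is $\pi_{\mathfrak{P}}(f)$ for some $f \in S_{\mathfrak{P}}$, and then $\mathfrak{L}_{\overline{\pp}}(\widetilde{\delta})(g) = \mathfrak{L}_{\mathfrak{P}}(\delta)(f)$. Therefore
\[
\bigcap_{\overline{\pp_i} \subseteq \overline{\pp}} \big\{ g \in R_{\overline{\pp}} : \mathfrak{L}_{\overline{\pp}}(\widetilde{\delta})(g) = 0 \text{ for all } \delta \in \AAA_i \big\}
\;=\; \pi_{\mathfrak{P}}\!\left( \bigcap_{\pp_i \subseteq \mathfrak{P}} \big\{ f \in S_{\mathfrak{P}} : \mathfrak{L}_{\mathfrak{P}}(\delta)(f) = 0 \text{ for all } \delta \in \AAA_i \big\} \right),
\]
using on the right that $\pp_i \subseteq \mathfrak{P} \iff \overline{\pp_i} \subseteq \overline{\pp}$ and that the inner set contains $\mathfrak{K}S_{\mathfrak{P}}$ (each $\mathfrak{L}_{\mathfrak{P}}(\delta)$ kills $\mathfrak{K}S_{\mathfrak{P}}$ since $\delta$ kills $\mathfrak{K}$, so the fibre-wise image is exact). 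By the hypothesis that $(\pp_i,\AAA_i)$ is a differential primary decomposition of $J$, the bracketed intersection on the right equals $J_{\mathfrak{P}}$, whose image under $\pi_{\mathfrak{P}}$ is $J_{\mathfrak{P}}/\mathfrak{K}S_{\mathfrak{P}} = I_{\overline{\pp}}$. This is precisely \autoref{eq:defprimdec} for $I$, so $(\overline{\pp_1},\widetilde{\AAA_1}),\dots,(\overline{\pp_k},\widetilde{\AAA_k})$ is a differential primary decomposition of $I$.

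The only genuinely delicate point — and the step I expect to require the most care in the write-up — is the interchange of contraction-along-$\pi$ with the intersections and with localization: one must check that $\pi_{\mathfrak{P}}$ maps the intersection of the solution sets onto the intersection of their images (not merely into it), which is where containment of each solution set above $\mathfrak{K}S_{\mathfrak{P}}$ is used, and that \autoref{lem_localization}~(i) is being applied to the correct multiplicatively closed set, namely $S \setminus \mathfrak{P}$, whose image in $R$ is $R \setminus \overline{\pp}$. Everything else is the bijection of primes and the formal consequences of \autoref{lem_push_forward}, which I would state but not belabour.
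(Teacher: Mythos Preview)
Your proof is correct and follows essentially the same strategy as the paper's: both use $\Ker(\delta) \supseteq J \supseteq \mathfrak{K}$ to invoke \autoref{lem_push_forward}(ii) (with $\pp_i$ playing the role of $J$) and then transport the defining equations of \autoref{def_prim_dec} across the quotient $\pi : S \to R$. The paper's argument is much terser --- it records only that $\Ker(\delta) \supset \mathfrak{K}$, takes $\widetilde{\AAA_i}$ to be the preimage under the inclusion of \autoref{lem_push_forward}, and then writes ``This implies the assertion'' --- so your explicit verification of the localization compatibility and the passage of the intersection through $\pi_{\mathfrak{P}}$ is a welcome expansion rather than a different route.
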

\begin{proof}
Pick $m$ such that $\AAA_i \subset \Diff_{S/\KK}^m(S,S/\pp_i)$.
	We have $\pp_i \supset \mathfrak{K}$ and $\Ker(\delta) \supset \mathfrak{K}$ for all $\delta \in \AAA_i$.
By applying
	  \autoref{lem_push_forward} with $\pp_i$ and $\overline{\pp_i}$, we can take
	$  \widetilde{\AAA_i} $ to be the preimage of $\AAA_i$ under the canonical inclusion $\DiffR^m(R, R/\overline{\pp_i}) \hookrightarrow \Diff_{S/\KK}^{m}(S,S/\pp_i)$.
This implies the assertion.
\end{proof}

We conclude with an illustration for the non-smooth ring  \autoref{eq:Rnonsmooth} discussed in the Introduction.

\begin{example}
	Let $S = \mathbb{Q}[x,y,z]$, $R = S /\langle x^3+y^3+z^3 \rangle$, $I = 
	\langle \overline{x}^{2}\overline{z},\overline{y}^{3}+\overline{z}^{3},\overline{x}^{2}\overline{y}
	\rangle \subset R$ and $J=\left(x^{2}z,y^{3}+z^{3},x^{2}y,x^{3}+y^{3}+z^{3}\right) \subset S$.
	Minimal primary decompositions for the two ideals~are 
\begin{equation}
\label{eq:Q1Q2Q3}
	J \,=\, Q_1 \cap Q_2 \cap Q_3 \;\quad \text{ and } \;\quad I \,=\, 
	\overline{Q_1} \cap \overline{Q_2} \cap \overline{Q_3},
\end{equation}	
	where $Q_1=\langle y+z,x^{2}\rangle$, $Q_2 = 
	\langle y^{2}-y\,z+z^{2},x^{2} \rangle $ and $Q_3 = \langle y+z,z^{2},x^{2}z,x^{3}\rangle$.
	Their radicals are
	$\pp_1 = \langle y+z,x \rangle$, $\pp_2 = \langle x,y^{2}-y\,z+z^{2} \rangle$ and 
	$\pp_3 = \langle x,y,z\rangle$, with
	$\text{mult}_J(\pp_1) = 2$, $\text{mult}_J(\pp_2) = 2$ and $\text{mult}_J(\pp_3) = 1$. 
Note that $Q_2$  and $\pp_2$ would break into two components over $\mathbb{C}$,
the setting in  \autoref{eq:Rnonsmooth}, but we here use $\mathbb{Q}$.
	A minimal strong differential primary decomposition for $J$ in $S$ equals
	\begin{equation}
		\label{eq_diff_prim_examp_J}
		\big( \pp_1,  \{1, \partial_x\} \big), \quad \big( \pp_2, \{1, \partial_x\} \big) \quad \text{and} \quad \big( \pp_3,  \{\partial_x^2\} \big).
	\end{equation}
		By \autoref{cor_push_forward}, we can 
		interpret \autoref{eq_diff_prim_examp_J} as a differential primary decomposition for $I$.
However, there is no strong differential primary decomposition for the ideal $I$ in the non-regular ring $R$.
To be precise, using the method in \cite[Example 5.2]{NOETH_OPS}, it can be shown that the contribution of the $\overline{\pp_3}$-primary ideal $\overline{Q_3}$ cannot be described by using differential operators in $\DiffR(R,R)$.
\end{example}

\section{Polynomial Rings}
\label{sec:polynomials}

In this section, we fix a field $\KK$ of characteristic zero and $R = \KK[x_1,\ldots,x_n]$.
Here, \autoref{cor:formallysmooth} holds. Differential operators
live in the Weyl algebra
$ {\rm Diff}_{R/\KK}(R,R)\,= \, \KK\langle x_1,\ldots,x_n,\partial_1,\ldots,\partial_n \rangle$.
For a free module $M = R^p$ we have
$ {\rm Diff}_{R/\KK}(M,R) =
 {\rm Diff}_{R/\KK}(R,R)^p  = 
 \KK\langle x_1,\ldots,x_n,\partial_1,\ldots,\partial_n \rangle^p$.
  In words, a differential operator on $M = R^p$ is a $p$-tuple of elements in the Weyl algebra. 
  In what follows we focus on explicit descriptions and computations. We will employ a framework similar to that
   in \cite{PRIM_IDEALS_DIFF_EQS}. One goal is to present
an algorithm which we implemented in \texttt{Macaulay2}~\cite{MACAULAY2}. 

\begin{remark} \label{rem:m2m2m2}
We begin with a convenient formula for the multiplicity of an associated prime:
 $$ \quad {\rm mult}_{\tt I}({\tt P}) \, = \, {\tt degree(saturate(I,P)/I)/degree(P)} $$
 This means that the arithmetic multiplicity $ {\rm amult}({\tt I}) $ can be computed 
 in  \texttt{Macaulay2} as follows:
 \begin{verbatim}      sum(apply( ass(I), P -> degree(saturate(I,P)/I)/degree(P) )) \end{verbatim}
As an illustration, we list the four associated primes in  \autoref{ex:fromVogel} with their multiplicities:
\begin{verbatim}
      R = QQ[x,y,z]; I = ideal(x^2*y, x^2*z, x*y^2, x*y*z^2);
      apply( ass(I), P -> {P,degree(saturate(I,P)/I)/degree(P)})
\end{verbatim}
The output shows that  the primes $\pp_1,\pp_2,\pp_3,\pp_4$ have multiplicities $1,1,1,2$
in this affine scheme. 
\end{remark}

We now turn to differential primary decompositions. 
Let $\mathcal{S} = \{x_{i_1},  \ldots, x_{i_\ell} \} $ be a subset of the variables in $R = \KK[x_1,\ldots,x_n]$.
Consider the polynomial subring $\KK[\mathcal{S}] :=\KK[x_{i_1}, \ldots,x_{i_\ell}] \subseteq R$ and
 the field of rational functions $\KK(\mathcal{S}) := \KK(x_{i_1},\ldots,x_{i_\ell})$.
	As in \cite{PRIM_IDEALS_DIFF_EQS}, we work with the  \emph{relative Weyl algebra},
	which is defined as the ring of $\KK[\mathcal{S}]$-linear differential operators on $R$:
	$$
	D_{n}(\mathcal{S}) \,:=\, \Diff_{R/\KK[\mathcal{S}]}(R,R) \,=\, R \big< \partial_{x_i} \mid x_i \not\in \mathcal{S} \big> \,\subseteq\,  R\langle \partial_{x_1},\ldots, \partial_{x_n} \rangle.
	$$
	Every differential operator $\delta \in D_{n}(\mathcal{S})$ is a unique $\KK$-linear combination  of standard monomials  $\,\mathbf{x}^\alpha \partial_\mathbf{x}^\beta =  x_1^{\alpha_1} \cdots x_n^{\alpha_n}  \left(\prod_{i \not\in \mathcal{S}} \partial_{x_i}^{\beta_i}\right)$, where $\alpha_i \in \mathbb{N}$, $\beta_i \in \NN$.
Differential operators $\delta \in D_n(\mathcal{S})$ act on
	polynomials $f  \in R$ in the familiar way, which is given by
	$\,  x_i ( f) = x_i \cdot f \,\, {\rm and} \,\, \partial_{x_i} ( f) = \partial f / \partial x_i $.

Let $\pp \in \Spec(R)$ be a prime ideal of dimension $\dim(R/\pp) = d$.
We say that  the set  of variables $\mathcal{S}$ is a {\em basis} modulo $\pp$ if
$|\mathcal{S}| = d$ and $\KK[\mathcal{S}] \cap \pp  = \{0\} $.
This specifies the  bases of the {\em algebraic matroid} of the prime $\pp$.
In the notation of  \cite[Example 13.2]{Mateusz}, this is the  algebraic
matroid associated with the generators $E = \{\overline{x}_1,\ldots,\overline{x}_n\}$
of the field extension $ K = k(\pp)$ over $F = \KK$.

We propose the following more refined notion of strong differential primary decomposition.
This following definition for polynomial rings differs from Definitions \ref{def_prim_dec} 
and \ref{def_prim_dec_modules} in that
every component is now a triplet, with the new entry being a choice of subset
of $\{x_1,\ldots,x_n\}$.

\begin{definition}
	\label{def_prim_def_poly}
	Let $I $ be an ideal  in the polynomial ring $R = \KK[x_1,\ldots,x_n]$,
	with associated primes $\Ass(R/I) = \{\pp_1,\ldots,\pp_k\}$.
	A \emph{differential primary decomposition} of $I$ is a list of triplets 
$$ (\pp_1, \mathcal{S}_1,\AAA_1), \;\; (\pp_2, \mathcal{S}_2,\AAA_2), \;\; \ldots \,, \;\; (\pp_k, \mathcal{S}_k,\AAA_k), $$
where  $\mathcal{S}_i$ is a basis modulo $\pp_i$ and
$\AAA_i $ is a finite set of differential operators in $ D_{n}(\mathcal{S}_i)$ such that
	$$ \qquad
	I_\pp \cap R \,\,\,=\; \bigcap_{i: \pp_i \subseteq \pp} \!\!
	 \big\lbrace f \in R \mid \delta ( f) \in \pp_i \text{ for  all }   \delta \in \AAA_i  \big\rbrace \qquad
\text{for each $\pp \in \Ass(R/I)$.} $$
This condition implies  \autoref{eq:diffprimdec}, so we get the desired
test of membership in $I$ by differential operators.
	\end{definition}		

What follows is our main result on
differential primary decompositions over polynomial rings. 

\begin{theorem} \label{thm_poly_case}
Fix a polynomial ideal $I \subset R$ with $\Ass(R/I) = \{\pp_1,\ldots,\pp_k\} \subset \Spec(R)$.
The size of a differential primary decomposition satisfies $k \geq {\rm amult}(I)$, and
 this is tight. More precisely,
\begin{enumerate}[\rm (i)]
\item $I$ has a differential primary decomposition $\{(\pp_i,  \mathcal{S}_i,\AAA_i)\}_{i=1,\ldots,k}$ such that
$ |\AAA_i | = {\rm mult}_I(\pp_i)$.
\item If  $\{(\pp_i,  \mathcal{S}_i,\AAA_i)\}_{i=1,\ldots,k}$ is any differential primary decomposition for $I$, then
$|\AAA_i | \, \ge  \,{\rm mult}_I(\pp_i)$.
\end{enumerate}
\end{theorem}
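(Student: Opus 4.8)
The lower bound in part (ii) is immediate from \autoref{thm:main}~(ii). Forgetting the sets $\mathcal{S}_i$ and composing each operator in $D_n(\mathcal{S}_i) \subseteq \Diff_{R/\KK}(R,R)$ with the projection $R \twoheadrightarrow R/\pp_i$ turns a differential primary decomposition in the sense of \autoref{def_prim_def_poly} into one in the sense of \autoref{def_prim_dec}: the condition $\delta(f) \in \pp_i$ becomes the condition in \autoref{eq:defprimdec}. Since $\mathrm{char}(\KK) = 0$ the field $\KK$ is perfect, so \autoref{thm:main}~(ii) yields $|\AAA_i| \ge {\rm mult}_I(\pp_i)$ for every $i$, and summing over $i$ shows that the total number of operators is at least $\amult(I)$. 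Hence the content of the theorem is part (i).

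For part (i) the plan is to re-run the inductive construction in the proof of \autoref{thm:main}~(i), inserting two features special to the polynomial ring $R = \KK[x_1,\ldots,x_n]$. First, for each associated prime $\pp_i$ I would fix a subset of variables $\mathcal{S}_i$ that is a basis modulo $\pp_i$; such a set exists because a basis of the algebraic matroid of $\pp_i$ has exactly $\dim(R/\pp_i)$ elements. Then $\LL_i := \KK(\mathcal{S}_i)$ is an intermediate field $\KK \subset \LL_i \subset R_{\pp_i}$, and $\LL_i \hookrightarrow k(\pp_i)$ is a finite separable algebraic extension: it is algebraic because $\KK[\mathcal{S}_i] \cap \pp_i = \{0\}$ forces $\mathcal{S}_i$ to be a transcendence basis of $k(\pp_i)/\KK$, and separable because $\mathrm{char}(\KK) = 0$. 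Thus $\LL_i$ can play the role of the field ``$\LL$'' that the proof of \autoref{thm:main}~(i) extracts from \autoref{lem_extend_field} at step $i$, and \autoref{prop_approx} applies over $\LL_i$. Carrying out the induction of \autoref{thm:main}~(i) with this choice of base field at step $i$ produces, after localizing at $\pp_i$, a set $\AAA_i' \subset \Diff_{R_{\pp_i}/\LL_i}(R_{\pp_i}, k(\pp_i))$ with $|\AAA_i'| = {\rm mult}_I(\pp_i)$ that realizes the localized identity \autoref{eq:identityholds} for $j = i$ (the analogues of \autoref{eq_J_pp_as_sols} and \autoref{eq_large_sol} with $\KK$ replaced by $\LL_i$).

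The second feature is the passage from operators valued in $k(\pp_i)$ to operators lying in the relative Weyl algebra $D_n(\mathcal{S}_i) = \Diff_{R/\KK[\mathcal{S}_i]}(R,R)$. The ring $R_{\pp_i}$ is a localization of the polynomial ring $\LL_i[x_j \mid x_j \notin \mathcal{S}_i]$, hence formally smooth over $\LL_i$, so \autoref{lem_lifting} lets us lift each $\delta \in \AAA_i'$ to an operator in $\Diff_{R_{\pp_i}/\LL_i}(R_{\pp_i}, R_{\pp_i})$ without enlarging its kernel. Because $\KK[\mathcal{S}_i] \cap \pp_i = \{0\}$, \autoref{lem_localization}~(i) identifies the localization of $D_n(\mathcal{S}_i)$ along $R \setminus \pp_i$ with $\Diff_{R_{\pp_i}/\LL_i}(R_{\pp_i}, R_{\pp_i})$; clearing the denominators in $R \setminus \pp_i$ — which are units of $R_{\pp_i}$ and hence irrelevant for solution spaces — produces a set $\AAA_i \subset D_n(\mathcal{S}_i)$ with $|\AAA_i| = {\rm mult}_I(\pp_i)$ in which only the $\partial_{x_j}$ with $x_j \notin \mathcal{S}_i$ appear, as required by \autoref{def_prim_def_poly}. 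Finally, contracting the identities \autoref{eq:identityholds} from $R_\pp$ down to $R$ by means of \autoref{lem_localization}~(ii) (applied with the $(R\setminus\pp)$-torsion-free module $R/\pp_i$, using $\pp_i \subseteq \pp$) gives $I_\pp \cap R = \bigcap_{\pp_i \subseteq \pp}\{f \in R \mid \delta(f) \in \pp_i \text{ for all } \delta \in \AAA_i\}$ for every $\pp \in \Ass(R/I)$, which is the defining condition of \autoref{def_prim_def_poly}.

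The step I expect to be the main obstacle is the bookkeeping in re-running the induction of \autoref{thm:main}~(i) when the intermediate field $\LL_i$ varies with $i$: at step $i$ one must verify that the operators $\AAA_\ell$ built at earlier steps $\ell < i$ (each over its own base field $\LL_\ell$) still cut out the primary components $Q_\ell R_{\pp_i}$ for $\pp_\ell \subseteq \pp_i$, so that the analogue of \autoref{eq_J_pp_as_sols} holds and the induction closes. This goes through exactly as in \autoref{thm:main}~(i) — the relevant equalities are statements over the local rings $R_{\pp_j}$ and do not depend on which $\KK$-subfield was used to manufacture the operators — but it is the part that must be written out with care. By contrast, checking that the lifted operators genuinely lie in $D_n(\mathcal{S}_i)$ is automatic once one has the identification of the localization of $D_n(\mathcal{S}_i)$ with $\Diff_{R_{\pp_i}/\LL_i}(R_{\pp_i}, R_{\pp_i})$.
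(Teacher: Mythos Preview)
Your treatment of part~(i) is essentially the paper's argument: choose $\LL_i = \KK(\mathcal{S}_i)$ as the intermediate field in the induction of \autoref{thm:main}~(i), then clear denominators and lift via \autoref{lem_lifting}. The only cosmetic difference is that the paper clears denominators in $\KK[\mathcal{S}_i]\setminus\{0\}$ (via the isomorphism~\autoref{eq_isom_diff_indep_set}) rather than in $R\setminus\pp_i$; both work.

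There is, however, a genuine gap in your reduction for part~(ii). You assert that composing with the projections $R\twoheadrightarrow R/\pp_i$ turns a decomposition in the sense of \autoref{def_prim_def_poly} into one in the sense of \autoref{def_prim_dec}. But these definitions live in different rings: \autoref{def_prim_def_poly} imposes $I_\pp\cap R=\bigcap_i\{f\in R:\delta(f)\in\pp_i\}$, whereas \autoref{def_prim_dec} requires the \emph{localized} equality $I_\pp=\bigcap_i\{f\in R_\pp:\delta'(f)=0\}$. Passing from the first to the second is not automatic, because the kernel of a differential operator is only a $\KK$-subspace, not an $R$-module, so ``localizing'' the solution sets has no direct meaning. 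Concretely, for $g=f/s\in\bigcap_i K_i\subset R_\pp$ one would need $sg\in\bigcap_i L_i$ to conclude $g\in I_\pp$, and there is no reason the commutator $[\bar\delta',s](g)$ vanishes. The paper does \emph{not} reduce to \autoref{thm:main}~(ii); instead it exploits the extra structure present in \autoref{def_prim_def_poly}: the operators in $\AAA_i\subset D_n(\mathcal{S}_i)$ are $\KK[\mathcal{S}_i]$-\emph{linear}, so their kernels are honest $\KK[\mathcal{S}_i]$-modules. This allows one to localize at $W=\KK[\mathcal{S}_i]\setminus\{0\}$, obtaining $\mathcal{I}S=\{f\in JS:\mathfrak{L}_W(\tilde\delta)(f)=0\}$ over $S=W^{-1}R$, and only then invoke the dimension count from the proof of \autoref{thm:main}~(ii) via~\autoref{eq_isom_diff_indep_set}. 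That $\KK[\mathcal{S}_i]$-linearity is precisely the feature of the relative Weyl algebra that makes the lower bound go through, and your argument discards it.
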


\begin{proof} As before, we fix a primary decomposition $I=Q_1 \cap \cdots \cap Q_k$ 
 where ${\rm rad}(Q_i) = \pp_i$ for $i=1,\ldots,k$. We assume that $\pp_j \subsetneq \pp_i$ implies $j < i$.
Let $\mathcal{S}_i \subset \{x_1,\ldots,x_n\}$ be a basis modulo $\pp_i$.	
	
(i) We proceed by induction. By \autoref{lem_lifting}, we may consider differential operators in $\Diff_{R/\KK[\mathcal{S}_i]}(R,R/\pp_i)$.
	Fix $i \in \{1,\ldots,k\}$ and assume that the following induction hypotheses~hold:
	\begin{enumerate}[\rm (a)]
		\item There exist $\AAA_1,\ldots,\AAA_{i-1}$ with $\AAA_j \subset D_n(\mathcal{S}_j)$ and 
		$|\AAA_j| = {\rm mult}_I(\pp_i)$
		  for all $1 \le j \le i-1$.
		\item  The identity \autoref{eq:identityholds} holds for $1 \le j \le i-1$,
with 	$\AAA_\ell$ identified with its image in $ \Diff_{R/\KK[\mathcal{S}_i]}(R,R/\pp_\ell)$.
	\end{enumerate}

	Set $\pp = \pp_i$, $\FF = k(\pp)$ and $\mathcal{S}=\mathcal{S}_i$.	
We invoke the same steps as in the proof of \autoref{thm:main}~(i) and arrive again at the conclusions of \autoref{eq_large_sol}.
We now work over the field $\KK(\mathcal{S})$ instead of $\KK$, and we construct  a subset 
$\BBB = \{\xi_1, \ldots,\xi_s\} \subset \Diff_{R_\pp/\KK(\mathcal{S})}(R_\pp,\FF)$ with $s = {\rm mult}_I(\pp)$ such that
$$ I_\pp \,\,\,	= \bigcap_{1 \le \ell \le i-1 \atop \pp_\ell \subseteq \pp} \!
\big\lbrace f \in R_\pp \mid \mathfrak{L}_\pp(\delta)(f) = 0 \text{ for all } 
\delta \in \AAA_\ell \big\rbrace \,\,\bigcap\,\, \Sol(\BBB). $$
Since $\mathcal{S}$ is a basis modulo $\pp$, we have
$R/\pp \otimes_{\KK[\mathcal{S}]}  \KK(\mathcal{S}) = \FF$, and we obtain canonical isomorphisms 
	\begin{equation}
		\label{eq_isom_diff_indep_set}	\Diff_{R_\pp/\KK(\mathcal{S})}(R_\pp,\FF) \cong \Diff_{\left(R\otimes_{\KK[\mathcal{S}]}  \KK(\mathcal{S})\right)/\KK(\mathcal{S})}(R\otimes_{\KK[\mathcal{S}]}  \KK(\mathcal{S}),\FF) \cong  \KK(\mathcal{S}) \otimes_{\KK[\mathcal{S}]} \Diff_{R/\KK[\mathcal{S}]}(R,R/\pp).
	\end{equation}
See \cite[Lemma 2.7~(iii)]{NOETH_OPS} for the isomorphism on the left.
For each $1 \le h \le s$, we now write  $\xi_h$	as $\frac{\delta_h}{r_h}$,
 where $\delta_h \in \Diff_{R/\KK[\mathcal{S}]}(R,R/\pp)$ and $r_h \in \KK[\mathcal{S}] \backslash \{0\}$.
	After lifting $ \{\delta_1, \ldots,\delta_s\}$ into $ \AAA_i \subset D_n(\mathcal{S}_i)$, we obtain
	the desired decomposition for~$I_\pp$.
		
	\medskip
	
	(ii) Suppose that $(\pp_1, \mathcal{S}_1,\AAA_1)$, $\ldots$, $(\pp_k, \mathcal{S}_k, \AAA_k)$ is a differential primary decomposition for $I$.
	Fix $\pp = \pp_i$ with $1 \le i \le k$, and set $\FF = k(\pp)$, $\AAA = \AAA_i$, $\mathcal{S}=\mathcal{S}_i$, $W = \KK[\mathcal{S}] \setminus \{0\}$, $S = W^{-1}R$, $\mathcal{I} = I_\pp \cap R$ and $J = (\mathcal{I} :_{R} \pp_i^\infty)$.
From  \autoref{def_prim_def_poly} and by analogy with \autoref{eq_Ipp_as_as_equat_Jpp}, we obtain 
	\begin{equation*}
\mathcal{I} \,\,=\,\, \big\lbrace f \in J \mid \widetilde{\delta}(f) = 0 
\text{ for all } \widetilde{\delta} \in \widetilde{\AAA} \big\rbrace.
	\end{equation*}
Here $\widetilde{\AAA}$ denotes the image of $\AAA$ under the canonical map 
	$$
	D_n(\mathcal{S}) = \Diff_{R/\KK[\mathcal{S}]}(R, R) \rightarrow \Diff_{R/\KK[\mathcal{S}]}(J, R/\pp), \;\; \delta \mapsto  \pi \circ \delta \circ \iota
	$$ 
	determined by $\iota : J \hookrightarrow R$ and $\pi : R \twoheadrightarrow R/\pp$.
	Since the operators in
	$\AAA \subset D_n(\mathcal{S})$ are $\KK[\mathcal{S}]$-linear,
	$$
	\mathcal{I}S \;=\; \big\lbrace f \in JS \mid \mathfrak{L}_W (\widetilde{\delta})(f) = 0 \text{ for all } \widetilde{\delta} \in \widetilde{\AAA} \big\rbrace.
	$$
Using the isomorphism in \autoref{eq_isom_diff_indep_set}, we can proceed as in
\autoref{thm:main}~(ii) to infer  $|\AAA| \ge  {\rm mult}_I(\pp)$.
\end{proof}

Finally, we present our algorithm for computing a minimal differential primary decomposition.
The algorithm is correct because it
realizes the steps in the proof of \autoref{thm_poly_case}~(i).
We use the representation in \cite[Theorem 2.1]{PRIM_IDEALS_DIFF_EQS} and the method for Noetherian operators in \cite[Algorithm~8.1]{PRIM_IDEALS_DIFF_EQS}.
 
 {
 	\onehalfspacing
\begin{algorithm}[Differential primary decomposition for an ideal in a polynomial ring]\label{algo}
	\hfill\\
	{\sc Input:} An ideal $I $ in $R = \KK[x_1,\ldots,x_n]$, where $ {\rm char}(\KK) = 0$. \\
	{\sc Output:}	A differential primary decomposition for $I$ of minimal size ${\rm amult}(I)$.
	
	\begin{enumerate}[(1)]
		\item Compute the set of associated prime ideals, $\Ass(R/I) = \{\pp_1,\ldots,\pp_k\}$.
		\item For $i$ from $1$ to $k$ do:
		\begin{enumerate}[(2.1)]
			\item Compute a basis $\mathcal{S}_i$ modulo $\pp_i$, and let $\FF_i = k(\pp_i)$ be the residue field of $\pp_i$.
			\item Compute the ideal $\mathcal{I} = I_{\pp_i} \cap R$ -- this is the intersection of
			all primary components of $I$ whose radical is contained in $\pp_i$.
			\item Compute the ideal  $J = \mathcal{I} :_R \pp_i^\infty $ -- this is the intersection of
			all  primary components of $I$ whose radical is strictly contained in $\pp_i$.
			\item Find $m >0$ giving the isomorphism in	 \autoref{prop_approx}~(i):
			$J/\mathcal{I}  \xrightarrow{\cong}  (J + {\pp_i}^m)/(\mathcal{I} + {\pp_i}^m)$.
						\item  By using \cite[Theorem 2.1, Algorithm 8.1]{PRIM_IDEALS_DIFF_EQS}, compute the $\FF_i$-vector subspaces $\mathcal{E}$ and $\mathcal{H}$ of the Weyl-Noether module 
$\FF_i \otimes_R D_n(\mathcal{S}_i) \,\cong\, \FF_i\big\langle \partial_{x_j} \mid x_j \not\in \mathcal{S}_i \big\rangle$.
These are $(R \otimes_{\KK[\mathcal{S}_i]} R)$-modules that correspond to the $\pp_i$-primary ideals $\mathcal{I} + \pp_i^m$ and $J + \pp_i^m$ respectively.
			\item Compute an $\FF_i$-vector subspace  $\mathcal{G} \subset \mathcal{E}$ complementary to $\mathcal{H}$ in $\mathcal{E}$, i.e., we have the direct sum $\mathcal{E} = \mathcal{H} \oplus \mathcal{G}$.
			Compute an $\FF_i$-basis $\overline{\AAA_i}$ of $\mathcal{G}$.
			\item Lift the basis $\overline{\AAA_i}$ to a subset $\AAA_i \subset D_n(\mathcal{S}_i)$ in the corresponding relative Weyl algebra.
		\end{enumerate}
		\item 
		Output the triples $(\pp_1, \mathcal{S}_1,\AAA_1)$, $\ldots$, $(\pp_k, \mathcal{S}_k, \AAA_k)$.
	\end{enumerate}
\end{algorithm}
}

We implemented this in {\tt Macaulay2} \cite{MACAULAY2}.
Our code is made available at {\url{mathrepo.mis.mpg.de}}.
This augments the package for primary ideals that is described in \cite{CCHKL},
and which rests on \cite{CHKL, PRIM_IDEALS_DIFF_EQS}.

The two commands in our implementation are called 
{\tt solvePDE} and {\tt getPDE}. The command {\tt solvePDE}
takes as its input an ideal $I$ in a polynomial ring and it 
creates a list of pairs $(\pp_i,\AAA_i)$ such that \autoref{eq:diffprimdec} holds and 
$|\AAA_i| = {\rm mult}_{\pp_i}(I)$ for all $i$. The command {\tt getPDE} reverses that process.
It starts from a list of Noetherian operators and computes ideal generators.
That reverse process does not check whether the given differential operators 
satisfy the conditions stipulated in \cite[Theorem 3.1]{PRIM_IDEALS_DIFF_EQS}.
Thus, running {\tt getPDE} after {\tt solvePDE} always returns the
ideal one starts with, but running {\tt solvePDE} after {\tt getPDE} might
lead to larger sets $\AAA_i$ than those one starts with.

\begin{example}
We run our two {\tt Macaulay2} commands on the ideal given in  \autoref{ex:fromVogel}:
\begin{verbatim}
load "noetherianOperatorsCode.m2"
R = QQ[x,y,z]; I = ideal(x^2*y,x^2*z,x*y^2,x*y*z^2)
solvePDE(I)
getPDE(oo)
\end{verbatim}
The output of the command {\tt solvePDE}(I) is the list of four pairs $(\pp_i, \AAA_i)$
that realizes \autoref{eq:diffprimdec}:
\begin{small}
\begin{verbatim}{{ideal x,{1}}, {ideal(y,z),{1}}, {ideal(x,y),{dx}},  {ideal(x,y,z),{dx*dy,dx*dy*dz}}}
\end{verbatim}
\end{small}
\end{example}

Our choice of the name {\tt solvePDE} is a reference to
the dual interpretation of the ideal $I$, namely as
a system of linear partial differential equations with 
constant coefficients. The Noetherian operators in 
$\AAA_i$ can be interpreted as polynomials
in $2n$ variables, called {\em Noetherian multipliers},
as in \cite[eqn (20)]{PRIM_IDEALS_DIFF_EQS}.
With this reinterpretation, our theory describes a
minimal integral representation for
all solutions to the given PDE. The command {\tt solvePDE} 
computes all solutions to  the PDE in the sense of the
Ehrenpreis-Palamodov Fundamental Principle 
\cite[Theorem 3.3]{PRIM_IDEALS_DIFF_EQS}.
We illustrate this for the binomial ideal  in \cite[Example 5.1]{DES},
which served in a statistics application.

\begin{example}
Consider the following system of linear PDE for an unknown function $f : \mathbb{R}^4 \rightarrow \mathbb{R}$:
\begin{equation} \label{eq:statPDE}
\frac{\partial^5 f}{\partial x_1^3 \partial x_3^2} =  \frac{\partial^5 f}{\partial x_2^5}\,,\,\,\,
\frac{\partial^5 f}{\partial x_2^2 \partial x_4^3} =  \frac{\partial^5 f}{\partial x_3^5}\, , \,\,\,
\frac{\partial^7 f}{\partial x_1^5 \partial x_4^2} =  \frac{\partial^7 f}{\partial x_2^7}\, , \,\,\,
\frac{\partial^7 f}{\partial x_1^2 \partial x_4^5} =  \frac{\partial^7 f}{\partial x_3^7}.
\end{equation}
We wish to describe all sufficiently differentiable functions $f$ that satisfy these four PDE.
The system \autoref{eq:statPDE} corresponds to an ideal $I$ in $R = \mathbb{Q}[x_1,x_2,x_3,x_4]$.
We enter this into {\tt Macaulay2}:
 \begin{verbatim}
  R = QQ[x1,x2,x3,x4];
  I = ideal( x1^3*x3^2-x2^5, x2^2*x4^3-x3^5, x1^5*x4^2-x2^7, x1^2*x4^5-x3^7 );
\end{verbatim}
This has four associated primes. The one minimal prime is the toric ideal
$\pp_1 = {\tt I}:\langle x_1x_2x_3 x_4 \rangle^\infty$. The three embedded primes are 
$\pp_2 = \langle x_1,x_2,x_3 \rangle $, 
$\pp_3 = \langle x_2,x_3,x_4 \rangle $, 
$\pp_4 = \langle x_1,x_2,x_3,x_4 \rangle $.
Using {\tt primaryDecomposition(I)}, we obtain a primary decomposition,
where the primary components have multiplicities
$1,67,60,916$. The methods in \cite{CCHKL, CHKL, PRIM_IDEALS_DIFF_EQS}
would compute $1044$ Noetherian operators to describe $I$. 
However,  \autoref{rem:m2m2m2} reveals that
${\rm amult}({\tt I}) = 1+18+18+170$ suffice.
Our command {\tt solvePDE(I)} computes
a minimal list of $170$ Noetherian operators in under ten minutes.
For instance, the last of the $18$ Noetherian operators $\delta$ displayed for the prime $\pp_2$ is
\begin{verbatim}
   x4^5*dx1*dx2*dx3^8 + 1120*x4^2*dx1*dx2^3*dx3^3 + 6720*dx1^3*dx2*dx3
\end{verbatim}
This translates into the following integral representation for certain special solutions to \autoref{eq:statPDE}:
$$ f(x_1,x_2,x_3,x_4) \,\,=\,\, \int \bigl(x_1 x_2 x_3^8\, t^5 + 1120 x_1x_2^3 x_3^3 \,t^2 + 6720 x_1^3 x_2 x_3\bigr) d \mu(t)  . $$
Here the notation is as in \cite[Theorem 3.3]{PRIM_IDEALS_DIFF_EQS}.
Our {\tt Macaulay2} output furnishes $170$ such formulas.
The (differential) primary decomposition 
gives insight into connectivity of random walks in \cite{DES}.
 \end{example}

The command {\tt solvePDE} can be used to compute solutions
for arbitrary homogeneous linear PDE with constant coefficients.
We believe that our results offer a recipe for
putting the Ehrenpreis-Palamodov theory from \cite{Bjoerk, Ehrenpreis, Hoermander, PALAMODOV}
into real-world practise. 
One crucial ingredient for this endeavor will be the development of numerical methods.
The advantage of numerical algorithms over symbolic ones was highlighted by Chen et al.~\cite{CHKL}, and we strongly agree with their assessment. A natural next step is the development
of an efficient numerical method whose input is a list of polynomials and whose output is 
a minimal differential primary decomposition. Likewise, it would be desirable to develop
a numerical algorithm for {\em primary fusion}, whose input consists of two ideals $I$ and $J$
and whose output is the intersection $I \cap J$. Here, each of the three ideals is encoded
by a minimal differential primary decomposition \autoref{eq:diffprimdec}. 
Primary fusion will describe the scheme-theoretic union of two affine schemes
in terms of differential operators.
Further developments in the context of applications were obtained in the subsequent work \cite{aitelmanssour_harkonen_sturmfels_2021}.

\begin{bibdiv}
\begin{biblist}

\bib{aitelmanssour_harkonen_sturmfels_2021}{article}{
      author={Ait El~Manssour, Rida},
      author={H\"ark\"onen, Marc},
      author={Sturmfels, Bernd},
       title={Linear {PDE} with constant coefficients},
        date={2022},
     journal={Glasgow Mathematical Journal},
       pages={1–26},
}

\bib{BGG_NON_NOETHERIAN_DIFF}{article}{
      author={Bern\v{s}te\u{\i}n, I.~N.},
      author={Gel{\cprime}fand, I.~M.},
      author={Gel{\cprime}fand, S.~I.},
       title={Differential operators on a cubic cone},
        date={1972},
        ISSN={0042-1316},
     journal={Uspehi Mat. Nauk},
      volume={27},
      number={1(163)},
       pages={185\ndash 190},
}

\bib{Bjoerk}{book}{
      author={Bj\"{o}rk, Jan-Erik},
       title={Rings of differential operators},
      series={North-Holland Mathematical Library},
   publisher={North-Holland Publishing Co., Amsterdam-New York},
        date={1979},
      volume={21},
}

\bib{BRUMFIEL_DIFF_PRIM}{article}{
      author={Brumfiel, G.},
       title={Differential operators and primary ideals},
        date={1978},
     journal={J. Algebra},
      volume={51},
      number={2},
       pages={375\ndash 398},
}

\bib{CCHKL}{article}{
      author={Chen, Justin},
      author={Cid-Ruiz, Yairon},
      author={H\"ark\"onen, Marc},
      author={Krone, Robert},
      author={Leykin, Anton},
       title={Noetherian operators in {M}acaulay2},
        date={2021},
        note={arXiv:2101.01002},
}

\bib{CHKL}{article}{
      author={Chen, Justin},
      author={H\"ark\"onen, Marc},
      author={Krone, Robert},
      author={Leykin, Anton},
       title={Noetherian operators and primary decomposition},
        date={2020},
        note={arXiv:2006.13881},
}

\bib{PRIM_IDEALS_DIFF_EQS}{article}{
      author={Cid-Ruiz, Y.},
      author={Homs, Roser},
      author={Sturmfels, Bernd},
       title={Primary ideals and their differential equations},
        date={2021},
     journal={Foundations of Computational Mathematics},
      volume={21},
       pages={1363\ndash 1399},
}

\bib{NOETH_OPS}{article}{
      author={Cid-Ruiz, Yairon},
       title={Noetherian operators, primary submodules and symbolic powers},
        date={2020},
     journal={Collectanea Mathematica},
       pages={1\ndash 28},
}

\bib{DES}{incollection}{
      author={Diaconis, Persi},
      author={Eisenbud, David},
      author={Sturmfels, Bernd},
       title={Lattice walks and primary decomposition},
        date={1998},
   booktitle={Mathematical essays in honor of {G}ian-{C}arlo {R}ota
  ({C}ambridge, {MA}, 1996)},
      series={Progr. Math.},
      volume={161},
   publisher={Birkh\"{a}user, Boston, MA},
       pages={173\ndash 193},
}

\bib{Ehrenpreis}{book}{
      author={Ehrenpreis, Leon},
       title={Fourier analysis in several complex variables},
      series={Pure and Applied Mathematics, Vol. XVII},
   publisher={Wiley-Interscience Publishers, John Wiley \& Sons, New
  York-London-Sydney},
        date={1970},
}

\bib{EHV}{article}{
      author={Eisenbud, David},
      author={Huneke, Craig},
      author={Vasconcelos, Wolmer},
       title={Direct methods for primary decomposition},
        date={1992},
     journal={Invent. Math.},
      volume={110},
      number={2},
       pages={207\ndash 235},
}

\bib{MACAULAY2}{misc}{
      author={Grayson, Daniel~R.},
      author={Stillman, Michael~E.},
       title={Macaulay2, a software system for research in algebraic geometry},
        note={Available at \url{http://www.math.uiuc.edu/Macaulay2/}},
}

\bib{Groebner}{article}{
      author={Gr\"{o}bner, Wolfgang},
       title={On the {M}acaulay inverse system and its importance for the
  theory of linear differential equations with constant coefficients},
        date={2010},
        ISSN={1932-2232},
     journal={ACM Commun. Comput. Algebra},
      volume={44},
      number={1-2},
       pages={20\ndash 23},
        note={Translated from the 1937 German original [Abh. Math. Semin. Univ.
  Hamb. {{\bf{1}}2} (1937), 127--132] by Michael Abramson},
}

\bib{EGAIV_IV}{article}{
      author={{Grothendieck}, Alexander},
       title={{\'E}l\'ements de g\'eom\'etrie alg\'ebrique: {IV}. {\'e}tude
  locale des sch\'emas et des morphismes de sch\'emas, {Q}uatri\`eme partie},
    language={fr},
        date={1967},
     journal={Publications Math\'ematiques de l'IH\'ES},
      volume={32},
       pages={5\ndash 361},
}

\bib{HHS}{article}{
      author={H\"ark\"onen, Marc},
      author={Hirsch, Jonas},
      author={Sturmfels, Bernd},
       title={Making waves},
        date={2021},
        note={arXiv:2111.14045},
}

\bib{AFFINE_HOPF_I}{article}{
      author={Heyneman, Robert~G.},
      author={Sweedler, Moss~Eisenberg},
       title={Affine {H}opf algebras. {I}},
        date={1969},
     journal={J. Algebra},
      volume={13},
       pages={192\ndash 241},
}

\bib{Hoermander}{book}{
      author={H\"{o}rmander, Lars},
       title={An introduction to complex analysis in several variables},
     edition={Third Edition},
      series={North-Holland Mathematical Library},
   publisher={North-Holland Publishing Co., Amsterdam},
        date={1990},
      volume={7},
}

\bib{Indrees}{article}{
      author={Idrees, Nazeran},
       title={Algorithms for primary decomposition of modules},
        date={2011},
     journal={Studia Sci. Math. Hungar.},
      volume={48},
      number={2},
       pages={227\ndash 246},
}

\bib{MATSUMURA}{book}{
      author={Matsumura, Hideyuki},
       title={Commutative ring theory},
     edition={1},
      series={Cambridge Studies in Advanced Mathematics volume 8},
   publisher={Cambridge University Press},
        date={1989},
}

\bib{Mateusz}{book}{
      author={Micha{\l}ek, Mateusz},
      author={Sturmfels, Bernd},
       title={Invitation to nonlinear algebra},
      series={Graduate Studies in Mathematics},
   publisher={American Mathematical Society, Providence, RI},
        date={2021},
      volume={211},
}

\bib{OBERST_NOETH_OPS}{article}{
      author={Oberst, Ulrich},
       title={The construction of {N}oetherian operators},
        date={1999},
     journal={J. Algebra},
      volume={222},
      number={2},
       pages={595\ndash 620},
}

\bib{PALAMODOV}{book}{
      author={Palamodov, V.~P.},
       title={Linear differential operators with constant coefficients},
      series={Grundlehren der mathematischen Wissenschaften, Band 168},
   publisher={Springer-Verlag, New York-Berlin},
        date={1970},
}

\bib{STV_DEGREE}{article}{
      author={Sturmfels, Bernd},
      author={Trung, Ng\^{o}~Vi\^{e}t},
      author={Vogel, Wolfgang},
       title={Bounds on degrees of projective schemes},
        date={1995},
     journal={Math. Ann.},
      volume={302},
      number={3},
       pages={417\ndash 432},
}

\bib{stacks-project}{misc}{
      author={{The Stacks project authors}},
       title={The stacks project},
         how={\url{https://stacks.math.columbia.edu}},
        date={2020},
}

\end{biblist}
\end{bibdiv}

\end{document}